\newcommand{\lra}{\longrightarrow}
\newcommand{\RR}{\mathbb R}
\newcommand{\CC}{\mathbb C}
\newcommand{\PP}{\mathbb P}
\newcommand{\cO}{\mathcal{O}}
\newcommand{\Ext}{\operatorname{Ext}}
\newcommand{\End}{\operatorname{End}}
\newcommand{\Hom}{\operatorname{Hom}}
\newcommand{\Gr}{\operatorname{Gr}}
\newcommand{\Aut}{\operatorname{Aut}}
\theoremstyle{plain}
\newtheorem{theorem}{Theorem}[section]
\newtheorem{lem}[theorem]{Lemma}
\newtheorem{prop}[theorem]{Proposition}
\newtheorem{cor}[theorem]{Corollary}
\newtheorem{conj}[theorem]{Conjecture}
\newtheorem{rem}[theorem]{Remark}
\newtheorem{ex}[theorem]{Example}
\numberwithin{equation}{section}
\begin{document}
\title{Coherent systems on the projective line}

\author{P. E. Newstead}
\author{Montserrat Teixidor i Bigas}

\address{P. E. Newstead\\Department of Mathematical Sciences\\
              University of Liverpool\\
              Peach Street, Liverpool L69 7ZL, UK}
\email{newstead@liv.ac.uk}              
\address{M. Teixidor i Bigas\\Mathematics Department\\
              Tufts University\\
              Medford\\
              MA 02155\\
              USA}
\email{montserrat.teixidoribigas@tufts.edu}

\date{\today}

\thanks{Both authors are members of the research group VBAC (Vector Bundles on Algebraic Curves). This work was started while the first author was a Clay Scholar visiting Tufts and Boston Universities.}
\keywords{Projective line, vector bundle, stable coherent system}
\subjclass[2010]{Primary: 14H60}

\maketitle

\begin{center}{\it Dedicated to the memory of Sir Michael Atiyah}

\end{center}

\begin{abstract}
It is well known that there are no stable bundles of rank greater than 1 on the projective line.
In this paper, our main purpose is to study the existence problem for stable coherent systems on the projective line when the number of sections is larger than the rank. 
We include a review of known results, mostly for a small number of sections.
\end{abstract}

\section{Introduction}\label{intro}

We begin with a tribute to Sir Michael Atiyah by the first author, who says ``I was among the earliest of Sir Michael's students (actually, I think, the fourth) and owe the principal topic of my research, namely vector bundles on algebraic curves, to him. My first supervisor, John Todd, pointed me in the direction of vector bundles but the inspiration for my thesis and a large part of my subsequent work came from Michael. Two of his early papers \cite{at1,at2} were instrumental in this together with the work of M. S. Narasimhan and C. S. Seshadri \cite{ns1,ns2} to which he introduced me at a time when I was still groping for a topic for my thesis. He was a constant support during this period (1964/65) which I spent in Oxford.  He also introduced me to David Mumford, who arranged for me to spend a productive year in Harvard. In large part, I owe my career to him and to an even earlier student of Michael's, Rolph Schwarzenberger, with whom I wrote my first paper. The current paper still falls within the area of vector bundles on algebraic curves.''

Coherent systems on an algebraic curve $C$ of genus $g$ are the higher rank analogue of linear systems and have been the subject of much study in the last 25 years. There is a concept of $\alpha$-stability for coherent systems depending on a real number $\alpha$, which must be positive for $\alpha$-stable coherent systems to exist. Coherent systems are related to higher rank Brill-Noether loci and have gauge-theoretic and symplectic interpretations; they are also closely related to holomorphic $k$-pairs. The survey \cite{bl} describes these relationships and includes relevant references.

The study of coherent systems includes general results valid for any genus (see, for example, \cite{bgmn}). (There are surveys in \cite{bl} and \cite{n}, but these are by no means up to date.) However most papers concentrate on the case $g\ge2$ (see \cite{bgmn} and many other papers, also the survey \cite{n}).  The case $g=1$ is quite straighforward and is studied in \cite{lne} (see also the survey \cite{bl}). 

The case of the projective line has been studied in \cite{ln1,ln2,ln3,bl} and even the existence of $\alpha$-stable coherent systems is quite complicated. The known results include some of a general nature but detailed study has concentrated on the case of coherent systems $(E,V)$ where the dimension $k$ of the subspace $V$ of $H^0(E)$ is less than the rank $n$ of $E$. The paper \cite{ln1} also contains results for  
$k=n$ and $k=n+1$. Our principal object in this paper is to study the existence problem for $k>n$. We also review the known results on the existence of $\alpha$-stable coherent systems on $\PP^1$; 
although not all  of them are necessary for our new results, we have included a complete survey. 

Our main results are described in section \ref{state} following the review in section \ref{pre}. Two conjectures which would come close to completing our results are also stated in section \ref{state}. Section \ref{nec} is concerned with necessary conditions for existence and section \ref{exist1} covers existence for large $\alpha$. These combine to prove Theorems \ref{t1} and \ref{t2}; in particular, Theorem \ref{t1} gives a complete solution in the case when $d$ is a multiple of $n$, while Theorem \ref{t2} gives a complete solution for large $\alpha$ when $d\equiv -1,-2\bmod n$ 
and also when $k\le d$ or $d$ is sufficiently large.
 In section \ref{non}, we obtain a non-existence result which proves Theorem \ref{t3}. Finally, in section \ref{exist2}, we investigate lower bounds on $\alpha$ for the existence of $\alpha$-stable coherent systems, yielding Theorem \ref{t4} and many cases in which we can identify precisely the lower bound, most of which are included in Theorem \ref{t5}.

We work throughout over the field $\CC$. We denote by $\cO$ the trivial line bundle and by $K$ the canonical line bundle on a curve $C$ and by $\cO(1)$ the hyperplane bundle on $\PP^1$. We write $\cO(a):=\cO(1)^{\otimes a}$, reserving the notation $\cO(a)^t$ for the direct sum of $t$ copies of $\cO(a)$. We write also $h^0(E)$ for the dimension of the space of sections $H^0(E)$ of a vector bundle $E$ and $h^1(E)$ for $\dim H^1(E)$.

We thank the referee for some minor corrections, which do not affect the results.

\section{Review of known results}\label{pre}
In this section, we state definitions and review the known results concerning the existence of stable coherent systems on $\PP^1$. These include general results which we shall need in later sections.
\subsection{Definitions}For the purposes of this paper, a \emph{coherent system} of type $(n,d,k)$ is a pair $(E,V)$, where $E$ is a vector bundle of rank $n$ and degree $d$ on a smooth irreducible complex projective curve $C$ of genus $g$ and $V\subset H^0(E)$ is a linear subspace of the space of sections of $E$ of dimension $k$ (for more general definitions, see \cite{LeP,kn}). For general surveys of coherent systems, see \cite{bl,n}.

For any real number $\alpha$, we define the $\alpha$-\emph{slope} $\mu_\alpha(E,V)$ of a coherent system $(E,V)$
of type $(n,d,k)$ by
\[\mu_\alpha(E,V):=\frac{d+\alpha k}n.\]
Then $(E,V)$ is said to be $\alpha$-stable ($\alpha$-semistable) if, for every 
proper subsystem $(E',V')$ of $(E,V)$,
\[\mu_\alpha(E',V')<(\le)\mu_\alpha(E,V).\] 
It follows immediately from the definition that, if $k>0$, $\alpha$-stable bundles can exist only if $d>0$ and $\alpha>0$. There exist moduli spaces $G(\alpha;n,d,k)$ ($\widetilde{G}(\alpha;n,d,k)$) for $\alpha$-stable (S-equivalence classes of $\alpha$-semistable) coherent systems (see \cite{kn}). The moduli space $G(\alpha;n,d,k)$ has an \emph{expected dimension} 
\begin{equation}\label{eq01}
\beta(n,d,k):=n^2(g-1)+1-k(k-d+n(g-1))
\end{equation}
and every irreducible component of $G(\alpha;n,d,k)$ has dimension $\ge\beta(n,d,k)$.
\subsection{Some facts from \cite{bgmn}}
We need some facts from \cite{bgmn} which are not fully covered in \cite{ln1,ln2}. 

We recall first that the groups 
\[\Ext^r((E_2,V_2),(E_1,V_1))\]
exist and that  $\Ext^1((E_2,V_2),(E_1,V_1))$ classifies the equivalence classes of extensions
\[0\lra(E_1,V_1)\lra(E,V)\lra(E_2,V_2)\lra0.\]
Moreover $\Ext^r((E_2,V_2),(E_1,V_1))=0$ for $r\ge3$.
\begin{prop}\label{prop01} \cite[Proposition 3.2]{bgmn}
Let $(E_1,V_1)$ and $(E_2,V_2)$ be coherent systems on $C$ of types $(n_1,d_1,k_1)$ and $(n_2,d_2,k_2)$ respectively. Let 
\[{\mathbb H}^0_{21}:=\Hom((E_2,V_2),(E_1,V_1)),\ \  {\mathbb H}^2_{21}:=\Ext^2((E_2,V_2),(E_1,V_1)).\] 
Then
\begin{equation}\label{eq02}
\dim\Ext^1((E_2,V_2),(E_1,V_1))=C_{21}+\dim{\mathbb H}^0_{21}+\dim{\mathbb H}^2_{21},
\end{equation}
where
\begin{equation}\label{eq03}
C_{21}=n_1n_2(g-1)+d_2n_1-d_1n_2+k_2(d_1-n_1(g-1)-k_1).
\end{equation}
Moreover, if $N_2$ is the kernel of the natural map $V_2\otimes{\mathcal O}\to E_2$, then
\begin{equation}\label{eq04}
{\mathbb H}^2_{21}=H^0(E_1^*\otimes N_2\otimes K)^*.
\end{equation}
\end{prop}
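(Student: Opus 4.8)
The plan is to derive the whole statement from the long exact sequence linking the $\Ext$-groups of coherent systems with the ordinary $\Ext$-groups of the underlying bundles; this sequence is available in \cite{bgmn,kn}. Writing ${\mathbb H}^1_{21}:=\Ext^1((E_2,V_2),(E_1,V_1))$ and using that $\Ext^2(E_2,E_1)=0$ on a curve, it reads
\[
\begin{aligned}
0&\to{\mathbb H}^0_{21}\to\Hom(E_2,E_1)\to\Hom(V_2,H^0(E_1)/V_1)\\
&\to{\mathbb H}^1_{21}\to\Ext^1(E_2,E_1)\xrightarrow{\ \rho\ }\Hom(V_2,H^1(E_1))\to{\mathbb H}^2_{21}\to0 ,
\end{aligned}
\]
where $\Hom(E_2,E_1)\to\Hom(V_2,H^0(E_1)/V_1)$ sends $\psi$ to the composite $V_2\hookrightarrow H^0(E_2)\xrightarrow{H^0(\psi)}H^0(E_1)\twoheadrightarrow H^0(E_1)/V_1$ (so its kernel is precisely ${\mathbb H}^0_{21}$), and $\rho$ is $H^1$ of the sheaf map $\mathcal{H}om(E_2,E_1)\to\mathcal{H}om(V_2\otimes\cO,E_1)=V_2^*\otimes E_1$ given by precomposition with the evaluation $\Phi_2\colon V_2\otimes\cO\to E_2$ (we use $\Ext^1(E_2,E_1)=H^1(\mathcal{H}om(E_2,E_1))$ and $\Hom(V_2,H^1(E_1))=H^1(V_2^*\otimes E_1)$). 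I expect the real work to lie in establishing this sequence, i.e.\ in the deformation theory of $(E,V)$ with $V$ \emph{required} to remain a subspace of $H^0(E)$: one cannot simply quote the $\Ext$ theory of the holomorphic triple $(E,V\otimes\cO,\Phi)$, for which the analogue of ${\mathbb H}^2_{21}$ vanishes (since the evaluation $V_1\otimes\cO\to E_1$ is always surjective on $H^1$), contradicting \eqref{eq04}. This extra rigidity is also why the term $H^0(E_1)/V_1$, rather than $H^0(E_1)$, occurs.

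Granting the sequence, \eqref{eq02}--\eqref{eq03} are formal. Exactness means the alternating sum of the dimensions of the terms vanishes; rearranging gives
\[
\dim{\mathbb H}^1_{21}=\dim{\mathbb H}^0_{21}+\dim{\mathbb H}^2_{21}-\chi(E_2^*\otimes E_1)+k_2\bigl(h^0(E_1)-h^1(E_1)-k_1\bigr).
\]
By Riemann--Roch $\chi(E_2^*\otimes E_1)=n_2d_1-n_1d_2+n_1n_2(1-g)$ and $h^0(E_1)-h^1(E_1)=\chi(E_1)=d_1-n_1(g-1)$, and substituting turns the right-hand side into $\dim{\mathbb H}^0_{21}+\dim{\mathbb H}^2_{21}+C_{21}$ with $C_{21}$ exactly as in \eqref{eq03}; this is \eqref{eq02}.

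For \eqref{eq04}, the sequence identifies ${\mathbb H}^2_{21}$ with $\Coker\rho$, so it remains to compute that cokernel. Factor $\Phi_2$ as $V_2\otimes\cO\twoheadrightarrow I_2\hookrightarrow E_2$ with $I_2:=\image\Phi_2$, and set $Q_2:=\Coker\Phi_2$; since $C$ is a smooth curve, $N_2=\Ker\Phi_2$ and $I_2$ are locally free and $Q_2$ is torsion. Applying $\mathcal{H}om(-,E_1)$ to $0\to I_2\to E_2\to Q_2\to0$ and using $\mathcal{H}om(Q_2,E_1)=0$ and $\mathcal{E}xt^1(E_2,E_1)=0$ gives $0\to\mathcal{H}om(E_2,E_1)\to\mathcal{H}om(I_2,E_1)\to\mathcal{E}xt^1(Q_2,E_1)\to0$, and since $\mathcal{E}xt^1(Q_2,E_1)$ is torsion the map $H^1(\mathcal{H}om(E_2,E_1))\to H^1(\mathcal{H}om(I_2,E_1))$ is surjective. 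Applying $\mathcal{H}om(-,E_1)$ to $0\to N_2\to V_2\otimes\cO\to I_2\to0$ and using $\mathcal{E}xt^1(I_2,E_1)=0$ gives $0\to\mathcal{H}om(I_2,E_1)\to V_2^*\otimes E_1\to\mathcal{H}om(N_2,E_1)\to0$, so that $\Coker\bigl(H^1(\mathcal{H}om(I_2,E_1))\to H^1(V_2^*\otimes E_1)\bigr)=H^1(\mathcal{H}om(N_2,E_1))$ because $H^2$ vanishes on a curve. Since $\rho$ is $H^1$ of the composite $\mathcal{H}om(E_2,E_1)\to\mathcal{H}om(I_2,E_1)\hookrightarrow V_2^*\otimes E_1$ and the first factor is surjective on $H^1$, we obtain $\Coker\rho=H^1(\mathcal{H}om(N_2,E_1))=H^1(E_1\otimes N_2^*)$; Serre duality on $C$ then yields $H^1(E_1\otimes N_2^*)\cong H^0\bigl((E_1\otimes N_2^*)^*\otimes K\bigr)^*=H^0(E_1^*\otimes N_2\otimes K)^*$, which is \eqref{eq04}. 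Apart from the deformation-theoretic input, the only subtle point here is that $\Phi_2$ need not be surjective, so the torsion sheaf $Q_2$ genuinely intervenes; it is the vanishing of $\mathcal{H}om(Q_2,E_1)$ and of the first cohomology of torsion sheaves that keeps the bookkeeping clean.
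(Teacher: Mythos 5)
The paper gives no proof of this proposition: it is quoted verbatim from \cite[Proposition 3.2]{bgmn}, so the only comparison available is with the proof in that source. Your route is essentially that proof. In \cite{bgmn} the $\Ext$-groups of coherent systems are computed as hypercohomology of a two-term complex, which yields exactly the seven-term exact sequence you write down; \eqref{eq02}--\eqref{eq03} then follow from the alternating sum of dimensions together with Riemann--Roch, and \eqref{eq04} from identifying the connecting map with $H^1$ of precomposition by $\Phi_2$ and applying Serre duality. Your Euler-characteristic bookkeeping and the Serre duality step are correct. Taking the long exact sequence itself on faith is legitimate here, since it is established in \cite{bgmn} (following Le Potier/He) independently of the proposition being proved.

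One slip to correct: it is not true that $Q_2:=\Coker\Phi_2$ is torsion, nor that $\mathcal{H}om(Q_2,E_1)=0$, in general. The evaluation map $V_2\otimes\cO\to E_2$ need not be generically surjective (e.g.\ $V_2$ may be zero, or may generate only a proper subbundle), so $Q_2$ can have a locally free part. This does not damage the argument, because the only consequence you use is the surjectivity of $H^1(\mathcal{H}om(E_2,E_1))\to H^1(\mathcal{H}om(I_2,E_1))$, and that survives without the torsion hypothesis: splitting the four-term sequence
\[
0\to\mathcal{H}om(Q_2,E_1)\to\mathcal{H}om(E_2,E_1)\to\mathcal{H}om(I_2,E_1)\to\mathcal{E}xt^1(Q_2,E_1)\to0
\]
at the image, surjectivity on $H^1$ follows because $H^2$ of any coherent sheaf vanishes on a curve and $\mathcal{E}xt^1(Q_2,E_1)$ is torsion in all cases (it is supported where $Q_2$ fails to be locally free, i.e.\ on the torsion of $Q_2$), so its $H^1$ vanishes. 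With that repair, your derivation of $\Coker\rho\cong H^1(N_2^*\otimes E_1)\cong H^0(E_1^*\otimes N_2\otimes K)^*$ is sound and agrees with the cited proof.
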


The following fact is also of interest for us.
\begin{prop}\label{prop02} \cite[Proposition 4.5]{bgmn}
Suppose that $k\ge n$. Then the condition of $\alpha$-stability for a coherent system $(E,V)$ of type $(n,d,k)$ is independent of $\alpha$ for $\alpha>d(n-1)$. Moreover
\begin{itemize}
\item if $\frac{k_1}{n_1}<\frac{k}n$ for every proper subsystem $(E_1,V_1)$ of $(E,V)$, then $(E,V)$ is $\alpha$-stable for all $\alpha>d(n-1)$;
\item  if there exists a subsystem $(E_1,V_1)$ of $(E,V)$ such that $\frac{k_1}{n_1}>\frac{k}n$, then $(E,V)$ is not $\alpha$-semistable for $\alpha>d(n-1)$.
\end{itemize}
\end{prop}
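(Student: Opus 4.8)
The plan is to work directly from the definition of $\alpha$-stability and exploit the fact that all the relevant quantities are affine-linear in $\alpha$. Fix a coherent system $(E,V)$ of type $(n,d,k)$ with $k\ge n$, and let $(E_1,V_1)$ be a proper subsystem of type $(n_1,d_1,k_1)$. The stability inequality $\mu_\alpha(E_1,V_1)<\mu_\alpha(E,V)$ rearranges to
\[
(d_1 n - d n_1) + \alpha(k_1 n - k n_1) < 0.
\]
The coefficient of $\alpha$ is $n n_1(\frac{k_1}{n_1}-\frac{k}{n})$, so the qualitative behaviour for large $\alpha$ is governed entirely by the sign of $\frac{k_1}{n_1}-\frac{k}{n}$: if it is negative the inequality holds for all sufficiently large $\alpha$; if positive it fails; if zero the inequality is independent of $\alpha$ and reduces to $d_1 n < d n_1$. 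This immediately yields the two bulleted statements once I have a uniform threshold, and it also shows that the \emph{set} of destabilising subsystems (hence $\alpha$-stability itself) can only change as $\alpha$ crosses one of the finitely many ``critical values'' where some such inequality becomes an equality.

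The substantive point is therefore to bound the critical values, i.e. to show that no critical value exceeds $d(n-1)$. First I would reduce to subsystems with $1\le n_1\le n-1$ and, crucially, bound $k_1$: since $V_1\subseteq V$ we have $k_1\le k$, but for the threshold I want a bound in terms of $d,n$ alone. Here I would use that a subsheaf generated by (a subspace of) its sections has $k_1\le d_1+n_1$ when $d_1\ge 0$, together with the observation that if $(E_1,V_1)$ is a candidate destabiliser then its $\alpha$-slope must eventually compete with $\mu_\alpha(E,V)$, which forces $\frac{k_1}{n_1}\ge\frac{k}{n}$ and in particular $k_1\ge 1$; one also gets $0\le d_1\le d$ from the analogous consideration on the quotient (or directly, since a sub-bundle of a bundle with $h^0>0$ on $\PP^1$ admitting sections has nonnegative degree in the relevant range). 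With $k_1$, $n_1$, $d_1$ all bounded in terms of $d$ and $n$, the critical value
\[
\alpha_{\mathrm{crit}}=\frac{d n_1 - d_1 n}{k_1 n - k n_1}
\]
(when the denominator is positive) is a ratio of bounded integers, and a direct estimate should give $\alpha_{\mathrm{crit}}\le d(n-1)$; I expect the extremal case to be $n_1=n-1$, $k_1=k-1$ or similar, which is exactly where the bound $d(n-1)$ comes from.

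Once the critical values are all $\le d(n-1)$, the independence of $\alpha$-stability on the open half-line $\alpha>d(n-1)$ is immediate, and the two dichotomous cases follow by reading off the sign of $k_1 n - k n_1$ as above: in the first bullet every proper subsystem has strictly smaller $k/n$-ratio, so every stability inequality is eventually strict and, by the threshold bound, strict already for $\alpha>d(n-1)$; in the second bullet a single subsystem with larger ratio violates even semistability for all large $\alpha$, hence for all $\alpha>d(n-1)$. The main obstacle I anticipate is the bookkeeping in the third step: getting clean a priori bounds on $d_1$ and $k_1$ for \emph{potential} destabilisers (as opposed to assuming $(E,V)$ is already stable, which would be circular) and then verifying that the resulting finite list of critical ratios is genuinely bounded by $d(n-1)$ without an exceptional case creeping above it. This is presumably where \cite{bgmn} invokes a more structural argument, perhaps via the Harder--Narasimhan-type filtration for coherent systems or via Proposition~\ref{prop01} to control the relevant $\Ext$-groups, but the elementary slope computation sketched here should suffice.
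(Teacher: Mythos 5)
This proposition is not proved in the paper at all: it is quoted verbatim from \cite[Proposition 4.5]{bgmn}, so your proposal can only be compared with the standard argument there. Your skeleton is the right one: the inequality $\mu_\alpha(E_1,V_1)<\mu_\alpha(E,V)$ is affine-linear in $\alpha$, the sign of $n_1k-nk_1$ governs the large-$\alpha$ behaviour, and integrality of $n_1k-nk_1$ turns this into a quantitative threshold. The gap is in your central step, where you claim that no critical value exceeds $d(n-1)$ because every potential destabiliser satisfies a priori bounds $0\le d_1\le d$ (and $k_1\le d_1+n_1$). The upper bound $d_1\le d$ says exactly that the quotient sheaf has nonnegative degree, and this is simply false for arbitrary subsheaves of an arbitrary $E$; correspondingly, individual thresholds can exceed $d(n-1)$. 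For example, take $E=\cO(2)\oplus\cO(-1)$ on $\PP^1$, so $n=2$, $d=1$, $k=2$, with $V\subset H^0(\cO(2))$ two-dimensional: the proper subsystem $(\cO(2),0)$ has $k_1/n_1=0<k/n$ but destabilises precisely for $\alpha\le\frac{3}{2}>d(n-1)=1$. So a subsystem-by-subsystem estimate of the kind you sketch cannot work; the proposition survives here only because a different subsystem, $(\cO(2),V)$, has $k_1/n_1>k/n$ and destabilises for every $\alpha$, i.e.\ the global hypothesis in the first bullet must do real work.

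The missing structural step is the following. If some saturated subsheaf had $d_1>d$, then some quotient bundle of $E$ would have negative degree, hence $E$ would have a semistable quotient $Q$ of negative slope; since $H^0(Q)=0$, all of $V$ lies in $H^0(\ker(E\to Q))$, giving a proper subsystem of type $(n-\rk Q,\ \cdot\ ,k)$ with $k/(n-\rk Q)>k/n$, contradicting the hypothesis of the first bullet. Granting that hypothesis, one therefore has $d_1\le d$ for all saturated subsystems (and one may assume $E_1$ saturated and $V_1=V\cap H^0(E_1)$, since this only raises $\mu_\alpha$), after which your estimate does finish: $\alpha(n_1k-nk_1)>nd_1-n_1d$ with $n_1k-nk_1\ge1$ and $nd_1-n_1d\le(n-n_1)d\le(n-1)d$. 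For the second bullet, $d_1\ge0$ is likewise not automatic; one should first replace $(E_1,V_1)$ by the subsheaf generated by $V_1$, which is globally generated (hence of nonnegative degree), has the same $k_1$ and no larger rank, and then the same estimate shows it destabilises for all $\alpha>d(n-1)$. Two smaller points: the proposition is stated for an arbitrary curve $C$, not just $\PP^1$, so $\PP^1$-specific section counts are out of place; and the bound $k_1\le d_1+n_1$ is neither valid for arbitrary subsheaves nor needed. You did flag that \cite{bgmn} probably invokes a Harder--Narasimhan-type argument; that is exactly the ingredient your elementary count cannot replace.
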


As a consequence of Proposition \ref{prop02}, there are only finitely many critical points at which the stability of a coherent system of type $(n,d,k)$ can change. This means that there are only finitely many distinct moduli spaces $G(\alpha;n,d,k)$. 
For any $\alpha'$, we define $G(\alpha'^+;n,d,k)$ to be the moduli space $G(\alpha;n,d,k)$ for $\alpha$ in a suitable open interval $]\alpha',\alpha''[$ (and similarly  $G(\alpha^-;n,d,k)$ in an open interval $]\alpha'',\alpha'[$).
 When $k<n$, $G(\alpha;n,d,k)=\emptyset$ if $\alpha\ge\frac{d}{n-k}$ \cite[Lemma 4.2]{bgmn}. If $k\ge n$, there is no automatic upper bound for $\alpha$, but $G(\alpha;n,d,k)$ remains constant for sufficiently large $\alpha$. 

\subsection{General results for coherent systems on $\PP^1$} The results in this subsection are taken principally from \cite{ln1} and \cite{ln2}, and include all the results on existence  of $\alpha$-stable coherent systems from those papers. From now on, we will be working with bundles on $\PP^1$.

The main fact that distinguishes the theory of vector bundles on $\PP^1$ from that on curves of higher genus is that there exist no stable bundles of rank $\ge2$. In fact, every vector bundle $E$ on $\PP^1$ can be written uniquely as
\[E\cong\bigoplus_{i=1}^n\cO(a_i)\mbox{ with }a_1\ge a_2\ge\cdots\ge a_n.\]
In modern form, this is due to Grothendieck, but it is essentially a statement in linear algebra and as such goes back at least to Kronecker. Note also that $h^0(\cO(a))=a+1$. We say that $E$ is of \emph{generic splitting type} if $a_1\le a_n+1$, in other words $E$ can be written as
\[E\cong\cO(a)^{n-t}\oplus\cO(a-1)^t\]
for some integers $a$, $t$ with $0\le t<n$.
A bundle $E$ is of generic splitting type if and only if $h^1(\End(E))=0$.

\begin{lem}\label{l01}\cite[Lemma 3.1]{ln1}\cite[Lemma 3.1 and Corollary 3.2]{ln2}
Suppose $k>0$ and $(E,V)$ is $\alpha$-stable ($\alpha$-semistable) for some $\alpha>0$.  Then
\[E\cong\bigoplus_{i=1}^n\cO(a_i)\]
with all $a_i>0$ ($a_i\ge0$). Moreover, if $(E_1,V_1)$ and $(E_2,V_2)$ are $\alpha$-semistable (possibly for different values of $\alpha$), then
\[\Ext^2((E_1,V_1),(E_2,V_2))=\Ext^2((E_2,V_2),(E_1,V_1))=0.\]
\end{lem}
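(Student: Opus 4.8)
I would prove the two assertions separately, using throughout the Grothendieck splitting $E\cong\bigoplus_{i=1}^n\cO(a_i)$ with $a_1\ge\cdots\ge a_n$; I take $n\ge2$, the case $n=1$ being immediate from $h^0(\cO(a))=a+1$ together with the fact that $\alpha$-stability with $k>0$ forces $d>0$. For the statement about the $a_i$, first observe that $\mu_\alpha(E,V)>0$: choosing a nonzero $s\in V$ (possible as $k>0$) and saturating the image of $s\colon\cO\to E$ gives a sub-line-bundle $L\subseteq E$ with $\deg L\ge0$, hence a proper subsystem $(L,\langle s\rangle)$ of type $(1,\deg L,1)$, and $\alpha$-(semi)stability forces $\mu_\alpha(E,V)\ge\deg L+\alpha\ge\alpha>0$. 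Next, $\cO(a_n)$ is a quotient bundle of $E$, so with $\bar V_n$ the image of $V$ in $H^0(\cO(a_n))$ we get a proper quotient coherent system $(\cO(a_n),\bar V_n)$, and $\alpha$-(semi)stability requires $\mu_\alpha(\cO(a_n),\bar V_n)\ge\mu_\alpha(E,V)$, strictly in the $\alpha$-stable case. If $a_n<0$ then $\cO(a_n)$ has no sections, so $\bar V_n=0$ and $\mu_\alpha(\cO(a_n),\bar V_n)=a_n<0<\mu_\alpha(E,V)$, a contradiction; hence $a_n\ge0$, which is the $\alpha$-semistable assertion. Assume now $(E,V)$ is $\alpha$-stable and $a_n=0$; then $\mu_\alpha(\cO(a_n),\bar V_n)$ equals $0$ or $\alpha$ according as $\dim\bar V_n$ is $0$ or $1$. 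The value $0$ is impossible since $\mu_\alpha(E,V)>0$; the value $\alpha$ forces $\mu_\alpha(E,V)<\alpha$, i.e.\ $d<\alpha(n-k)$, and since $d>0$ this yields $k<n$ and $\alpha>\frac{d}{n-k}$. But then $G(\alpha;n,d,k)=\emptyset$ by \cite[Lemma 4.2]{bgmn}, contradicting the existence of $(E,V)$. Hence $a_n\ge1$, so all $a_i>0$.

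For the vanishing of the $\Ext^2$'s I would use \eqref{eq04} of Proposition \ref{prop01}, which, since $K=\cO(-2)$ on $\PP^1$, reads
\[\Ext^2((E_2,V_2),(E_1,V_1))=H^0\bigl(E_1^*\otimes N_2\otimes\cO(-2)\bigr)^*,\qquad N_2=\Ker(V_2\otimes\cO\to E_2).\]
Two observations complete the argument. First, $N_2$ is locally free, being a subsheaf of $V_2\otimes\cO\cong\cO^{k_2}$ on a smooth curve, and $H^0(N_2)=0$, since on global sections the map $V_2\otimes\cO\to E_2$ induces the inclusion $V_2\hookrightarrow H^0(E_2)$; writing $N_2\cong\bigoplus_j\cO(b_j)$, this forces every $b_j\le-1$. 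Second, $(E_1,V_1)$ is $\alpha$-semistable, so by the first part $E_1\cong\bigoplus_i\cO(a_i)$ with every $a_i\ge0$. Consequently $E_1^*\otimes N_2\otimes\cO(-2)\cong\bigoplus_{i,j}\cO(-a_i+b_j-2)$ has all its summands of degree $\le-3$, so its $H^0$ vanishes and $\Ext^2((E_2,V_2),(E_1,V_1))=0$. Interchanging the two systems gives $\Ext^2((E_1,V_1),(E_2,V_2))=0$ by the identical argument.

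The slope bookkeeping in the first part and the substitution into \eqref{eq04} in the second are routine. The one delicate point I expect is the passage from $a_i\ge0$ to $a_i>0$ in the $\alpha$-stable case — i.e.\ ruling out a trivial quotient summand — which is exactly where one is forced to call on the non-existence statement $G(\alpha;n,d,k)=\emptyset$ for $k<n$, $\alpha\ge d/(n-k)$ of \cite[Lemma 4.2]{bgmn} (or some independent proof of it). One also has to dispose separately of the low-rank degeneracies (notably $n=1$, and situations where the subsystem or quotient used above is improper), but these reduce at once to $h^0(\cO(a))=a+1$ and the standing fact that $\alpha$-stability with $k>0$ requires $d>0$.
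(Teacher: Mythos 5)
This lemma is quoted in the paper from \cite{ln1,ln2} without proof, and your argument is correct and follows the same standard route as those sources: the slope comparison with the quotient summand $\cO(a_n)$ (together with \cite[Lemma 4.2]{bgmn} to rule out $a_n=0$ in the stable case) for the splitting statement, and the degree bookkeeping in \eqref{eq04}, using $H^0(N_2)=0$ and $K=\cO(-2)$, for the $\Ext^2$ vanishing. The only caveat is that invoking the first part to get $a_i\ge0$ for $E_1$ (resp.\ $E_2$) tacitly assumes $k_1,k_2>0$, which is the intended reading of the statement, since for $k_i=0$ the vanishing can fail (e.g.\ $\Ext^2\bigl((\cO(1),H^0(\cO(1))),(\cO(-3),0)\bigr)\ne0$).
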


\begin{theorem}\label{t01}\cite[Theorem 3.2, Corollaries 3.3 and 3.4]{ln1}
Suppose $k>0$. 
If $G(\alpha;n,d,k)\ne\emptyset$, then $\beta(n,d,k)\ge0$ and $G(\alpha;n,d,k)$ is smooth and irreducible and has 
dimension $\beta(n,d,k)$.
Moreover
\begin{itemize}
\item[(i)] for a general $(E,V)\in G(\alpha;n,d,k)$, $E$ is of generic splitting type and $V$ is generic in $\Gr(k,H^0(E))$;
\item[(ii)] for any $(n,d,k)$, the set $\{\alpha|G(\alpha;n,d,k)\ne\emptyset\}$ is an open interval 
$I(n,d,k)$ in $\RR$ (possibly empty or semi-infinite).
\end{itemize}
\end{theorem}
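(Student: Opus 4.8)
The plan is to extract smoothness and the dimension from the deformation theory of coherent systems together with Lemma~\ref{l01} and Proposition~\ref{prop01}, and then to get irreducibility by showing that the locus where $E$ has generic splitting type is dense. First I would note that at a point $(E,V)\in G(\alpha;n,d,k)$ the Zariski tangent space is $\Ext^1((E,V),(E,V))$ and the obstruction space is $\Ext^2((E,V),(E,V))$; by Lemma~\ref{l01} the latter vanishes for $\alpha$-stable coherent systems, so $G(\alpha;n,d,k)$ is smooth everywhere. Applying Proposition~\ref{prop01} with $(E_1,V_1)=(E_2,V_2)=(E,V)$ and $g=0$ gives $C_{21}=\beta(n,d,k)-1$, while $\dim\mathbb{H}^0_{21}=\dim\Hom((E,V),(E,V))=1$ because an $\alpha$-stable system is simple, and $\dim\mathbb{H}^2_{21}=\dim\Ext^2((E,V),(E,V))=0$ by Lemma~\ref{l01}; hence $\dim\Ext^1((E,V),(E,V))=\beta(n,d,k)$. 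Thus $G(\alpha;n,d,k)$ is smooth of pure dimension $\beta(n,d,k)$, and in particular empty unless $\beta(n,d,k)\ge0$.

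For irreducibility and (i), let $E_0\cong\cO(a)^{n-t}\oplus\cO(a-1)^t$ be the unique bundle of rank $n$ and degree $d$ of generic splitting type, and let $Z\subseteq G(\alpha;n,d,k)$ be the locus of systems $(E,V)$ with $E\cong E_0$. Since $h^1(\End E)$ is upper semicontinuous in families, $Z$ is open in $G(\alpha;n,d,k)$; and on geometric points $Z$ is the quotient of the $\alpha$-stable locus $U\subseteq\Gr(k,H^0(E_0))$ by $\Aut(E_0)$, which has dimension $n^2$ and whose stabilizer at an $\alpha$-stable point is exactly $\CC^*$, so $Z$ is irreducible of dimension $k(h^0(E_0)-k)-(n^2-1)=\beta(n,d,k)$. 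It remains to show $Z$ is dense, i.e.\ that it meets every irreducible component of $G(\alpha;n,d,k)$. Here I would use the long exact sequence relating $\Ext^\bullet((E,V),(E,V))$ to $\Ext^\bullet(E,E)$: as Lemma~\ref{l01} forces every $a_i>0$, we have $H^1(E)=0$, so the forgetful map $\Ext^1((E,V),(E,V))\to H^1(\End E)$ is surjective, and hence every first-order deformation of $E$ lifts to one of $(E,V)$; by the smoothness just proved it lifts to a genuine deformation of $(E,V)$ within $G(\alpha;n,d,k)$. Choosing a general deformation of $E$, which moves $E$ to generic splitting type (every bundle of rank $n$ and degree $d$ on $\PP^1$ admits $E_0$ as a flat deformation, the parameter space being irreducible), we obtain from a general point of any component $G'$ a nearby point in $Z\cap G'$; since $Z$ is open, $Z\cap G'$ is then dense in $G'$, so $G'\subseteq\overline Z$ for every component and $G(\alpha;n,d,k)=\overline Z$ is irreducible. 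Part~(i) follows: a general $(E,V)$ lies in the open set $Z$, so $E$ has generic splitting type, and since $Z$ is dominated by $\Gr(k,H^0(E_0))$, the subspace $V$ is general.

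For (ii), observe that by (i) we have $G(\alpha;n,d,k)\ne\emptyset$ precisely when $(E_0,V)$ is $\alpha$-stable for general $V\in\Gr(k,H^0(E_0))$, because the $\alpha$-stable $V$ form an open subset of this irreducible variety, which is dense once nonempty. For general $V$, one has $\dim(V\cap H^0(E'))=\max\{0,\,k-h^0(E_0)+h^0(E')\}$ simultaneously for all subbundles $E'\subseteq E_0$; passing to the saturation only makes a destabilizing subsystem more destabilizing, and there are only finitely many numerical types $(n',d',k')$ of saturated subbundles to test. Hence $\alpha$-stability of $(E_0,V)$ for general $V$ amounts to finitely many strict linear inequalities in $\alpha$ of the form $\tfrac{d'+\alpha k'}{n'}<\tfrac{d+\alpha k}{n}$, whose common solution set is an open interval; this is $I(n,d,k)$.

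The formal part here---smoothness, the dimension count, and deducing (ii) from (i)---is routine. I expect the main obstacle to be the density of $Z$: showing that every irreducible component of $G(\alpha;n,d,k)$ contains a system whose bundle has generic splitting type. This is exactly where one must combine the surjectivity of $\Ext^1((E,V),(E,V))\to H^1(\End E)$ (which needs $H^1(E)=0$), the unobstructedness supplied by Lemma~\ref{l01}, and the classical fact that every bundle on $\PP^1$ deforms to the bundle of generic splitting type.
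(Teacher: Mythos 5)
Your treatment of the main assertions is essentially sound, and it matches the route one expects (and, as far as this paper is concerned, the result is only quoted from \cite{ln1}, with the remark that the genericity of $V$ comes from openness of $\alpha$-stability): smoothness from $\Ext^2((E,V),(E,V))=0$ (Lemma \ref{l01}), dimension $\beta$ from Proposition \ref{prop01} with $C_{21}=\beta-1$, $\dim\mathbb{H}^0_{21}=1$ by simplicity, and irreducibility plus (i) by showing the open locus $Z$ of systems with $E$ of generic splitting type meets every component, using the surjectivity of $\Ext^1((E,V),(E,V))\to H^1(\End E)$ (which needs $H^1(E)=0$, available since all $a_i>0$) together with unobstructedness and the classical deformation to generic splitting type. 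Modulo the usual technicalities about local universal families, that part is fine.

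The genuine gap is in (ii). The claim that for general $V$ one has $\dim(V\cap H^0(E'))=\max\{0,\,k-h^0(E_0)+h^0(E')\}$ \emph{simultaneously for all} subbundles $E'\subseteq E_0$ is false: subbundles move in positive-dimensional families, and a Schubert-type jump condition of codimension $c$ for a fixed $H^0(E')$ can be forced on every $V$ once $E'$ varies in a family of dimension $\ge c$. Concretely, for $(n,d,k)=(2,2,1)$ a general section of $E_0=\cO(1)^2$ is nowhere vanishing, hence spans $V\cap H^0(L)$ for the degree-$0$ line subbundle $L\cong\cO$ it generates, whereas your formula predicts intersection $0$; your recipe would then give $I(2,2,1)=\,]0,\infty[$, while in fact $I(2,2,1)=\,]0,2[$ (the unexpected subsystem $(\cO,V)$ imposes $\alpha<2$). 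This is not a peripheral issue: the existence of such unexpected subsystems for \emph{general} $V$ is exactly what makes the genus-$0$ existence problem delicate (compare Proposition \ref{prop07} and Remark \ref{r01}, where the actual interval is strictly smaller than the ``expected'' one). So the finite list of inequalities you write down does not compute $I(n,d,k)$, and your derivation of (ii) does not stand as written. The standard repair is different and cleaner: if $\alpha_1<\alpha_2$ both lie in the set, use (i) to choose one $V$ with $(E_0,V)$ both $\alpha_1$- and $\alpha_2$-stable (intersect two dense subsets of $\Gr(k,H^0(E_0))$); since $\mu_\alpha(E',V')-\mu_\alpha(E,V)$ is affine in $\alpha$ for each subsystem, $(E_0,V)$ is $\alpha$-stable for every $\alpha\in[\alpha_1,\alpha_2]$, so the set is convex; it is open because a fixed $\alpha$-stable system remains stable for nearby $\alpha$ (only finitely many numerical types of subsystems are relevant, cf.\ Proposition \ref{prop02}). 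Alternatively, one can salvage your approach by replacing the false genericity formula with a constructibility argument identifying the set of subsystem types actually achieved by a very general $V$, but some such substitute is indispensable.
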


\begin{rem}\begin{em}
The genericity of $V$ is not stated in \cite{ln1}; it depends on the openness of the $\alpha$-stability condition, which is the basis for the proof of \cite[Theorem 3.2]{ln1}.
\end{em}\end{rem}

\begin{prop}\label{prop03}\cite[Propositions 3.5, 3.6 and 3.7]{ln1}
Suppose that $G(\alpha;n,d,k)\ne\emptyset$. Then, for a general $(E,V)\in G(\alpha;n,d,k)$,
\begin{itemize}
\item if $0<k<n$, we have an exact sequence
\[0\lra\cO^k\lra E\lra G\lra0,\]
where $V=H^0(\cO^k)\subset H^0(E)$ and $G$ is a vector bundle; moreover
\[E\cong\cO(a)^{n-t}\oplus\cO(a-1)^t\mbox{ and }G\cong\cO(a+l+1)^m\oplus\cO(a+l)^{n-k-m},\]
where the integers $a$, $t$, $l$ and $m$ are defined by
\[d=an-t\mbox{ with }0\le t<n\mbox{ and }ka-t=l(n-k)+m\mbox{ with }0\le m<n-k;\]
\item if $k=n$, we have an exact sequence
\[0\lra\cO^n\lra E\lra T\lra0,\]
where $V=H^0(\cO^n)\subset H^0(E)$ and $T$ is a torsion sheaf of length $d$;
\item if $k>n$, we have an exact sequence
\[0\lra H\lra V\otimes \cO\lra E\lra0;\]
moreover $E\cong\cO(a)^{n-t}\oplus\cO(a-1)^t$, where $d=an-t$ with $0\le t<n$.
\end{itemize}
\end{prop}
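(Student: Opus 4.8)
The plan is to reduce everything to a statement about a balanced bundle and a general subspace of its sections. By Theorem~\ref{t01}(i), for a general $(E,V)\in G(\alpha;n,d,k)$ the bundle $E$ is of generic splitting type and $V$ is general in $\Gr(k,H^0(E))$; by Lemma~\ref{l01} all the exponents are at least $1$, so $E\cong\cO(a)^{n-t}\oplus\cO(a-1)^t$ with $a\ge1$ and $d=an-t$, $0\le t<n$, the bundle $E$ is globally generated, $h^0(E)=d+n$, and $h^1(E(-p))=0$ for every $p\in\PP^1$ (whence $h^0(E(-p))=d$). So it is enough to prove, for such an $E$ and a general $k$-dimensional $V\subset H^0(E)$, that the asserted exact sequences exist with the stated splitting types. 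Everything will be extracted from the evaluation map $\operatorname{ev}\colon V\otimes\cO\to E$, whose rank behaviour I would study on the incidence variety in $\PP^1\times\Gr(k,H^0(E))$.

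First, for general $V$ the map $\operatorname{ev}$ has maximal rank at the generic point of $\PP^1$: since $E$ is globally generated one can find $n$ sections whose values span a fibre, hence which are independent over the function field, and a general $V$ contains $\min(k,n)$ of them. If $k=n$ this already says $\operatorname{ev}\colon\cO^n\hookrightarrow E$ is injective as a sheaf map, so its cokernel $T=E/\cO^n$ is a torsion sheaf of length $\deg E-0=d$, and $V=H^0(\cO^n)$ since $h^0(\cO^n)=n=\dim V$. If $k\ne n$, a dimension count on the incidence variety upgrades this: the locus where $\operatorname{ev}$ drops rank has, over each point of $\PP^1$, codimension $n-k+1$ (if $k<n$) or $k-n+1$ (if $k>n$) in $\Gr(k,H^0(E))$ — using $\dim H^0(E(-p))=d$ — which is $\ge2$, so the set of $V$ for which $\operatorname{ev}$ degenerates somewhere is a proper closed subset. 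Hence for general $V$: if $k>n$, $\operatorname{ev}$ is surjective, $H:=\Ker(\operatorname{ev})$ is locally free of rank $k-n$ and degree $-d$, and the sequence follows; if $k<n$, the image of $\operatorname{ev}$ is a rank-$k$ subbundle globally generated by the $k$-dimensional space $V$, hence isomorphic to $\cO^k$ with $V=H^0(\cO^k)$, and $G:=E/\cO^k$ is locally free of rank $n-k$ and degree $d$.

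It remains to identify the splitting type of $G$ when $k<n$, and this is the step I expect to be the main obstacle. Since a bundle on $\PP^1$ of generic splitting type is determined by its rank and degree, and a short computation shows that $\cO(a+l+1)^m\oplus\cO(a+l)^{n-k-m}$ (with $l,m$ defined by $ka-t=l(n-k)+m$, $0\le m<n-k$) has rank $n-k$ and degree $d$, it suffices to prove that $G$ is of generic splitting type for general $V$. I would do this by semicontinuity: as $\cO^k\hookrightarrow E$ varies over the (open) family of subbundle inclusions, the cokernels form a family of bundles on $\PP^1$ in which failing to be of generic splitting type is a closed condition, so it is enough to exhibit a single inclusion with cokernel of generic splitting type. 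Dually, this amounts to showing that the middle term of a general extension $0\to\cO^k\to E'\to \cO(a+l+1)^m\oplus\cO(a+l)^{n-k-m}\to0$ is of generic splitting type, which one checks by computing $h^1$ of the twists $E'(e)$ from the associated long exact sequence, the relevant connecting maps being multiplication maps that are of maximal rank for a general choice of extension class. The delicate part is exactly this maximal-rank assertion: the genericity of $V$ (equivalently, of the extension class) does not visibly bound the sub- and quotient-line bundles of $G$, and although the multiplication maps in question are elementary on $\PP^1$, verifying that they have maximal rank is the technical heart of the argument; the rest reduces to routine incidence-variety dimension counts together with Grothendieck's classification of bundles on $\PP^1$.
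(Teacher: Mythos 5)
Your overall strategy is sound, and for most of the statement it works: this proposition is only quoted in the paper from \cite{ln1}, and your reduction via Theorem \ref{t01}(i) and Lemma \ref{l01} to the case of a balanced bundle $E\cong\cO(a)^{n-t}\oplus\cO(a-1)^t$ with all summands of degree $\ge1$ and a generic $V\in\Gr(k,H^0(E))$ is exactly the right starting point. Your incidence count is also correct: since $\dim H^0(E(-p))=d$, the locus of $V$ meeting $H^0(E(-p))$ in dimension $\ge j$ has codimension $j(n-k+j)$ in $\Gr(k,d+n)$, which is $n-k+1\ge2$ for $j=1$ (case $k<n$) and $k-n+1\ge2$ for $j=k-n+1$ (case $k>n$); sweeping over $p\in\PP^1$ leaves a proper closed subset, so for general $V$ the evaluation map is everywhere of maximal rank and the three exact sequences follow, with $V=H^0(\cO^k)$ and $T$ of length $d$. (The phrase ``a general $V$ contains $\min(k,n)$ of them'' is not literally true, but your dimension count makes it redundant.) So the cases $k=n$ and $k>n$ are complete.

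The genuine gap is the point you yourself flag: the identification $G\cong\cO(a+l+1)^m\oplus\cO(a+l)^{n-k-m}$ for $0<k<n$, i.e.\ balancedness of the quotient for general $V$. Your semicontinuity reduction is legitimate (the subbundle inclusions $\cO^k\hra E$ form an irreducible open subset of $\Hom(\cO^k,E)$, balancedness of the cokernel is open there, and general $V$ corresponds to a general inclusion), so it does suffice to produce one extension of the balanced $G_0$ by $\cO^k$ with balanced middle term $E'$. But that remaining step is the entire content of this part of the proposition, not an afterthought: everything established so far only gives $h^1(G)=0$, i.e.\ all summands of $G$ of degree $\ge-1$, which is nowhere near balancedness. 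Concretely, $E'$ is balanced if and only if $h^0(E'(-a-1))=0$ and $h^1(E'(-a))=0$, and by the long exact sequences these amount to injectivity of $H^0(G_0(-a-1))\to H^1(\cO(-a-1))^k$ and surjectivity of $H^0(G_0(-a))\to H^1(\cO(-a))^k$, both given by cup product with the extension class in $\Ext^1(G_0,\cO^k)\cong H^1(G_0^*)^k$ (plus $h^1(G_0(-a))=0$, i.e.\ $l\ge0$, which does follow from the nonemptiness hypothesis via Proposition \ref{prop04}). You assert, but do not prove, that a general class makes these maps of maximal rank; without that, or an explicit substitute (e.g.\ induction on $k$ starting from the statement that a general nowhere-vanishing section of a balanced bundle of positive degrees has balanced quotient, or an explicit matrix of monomials realizing a surjection $E\to G_0$ with kernel $\cO^k$), the case $0<k<n$ is not proved. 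As written, the proposal stops exactly at the technical heart of the argument.
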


\begin{prop}\label{prop04}\cite[Propositions 4.1 and 4.2]{ln1}
Suppose $k>0$ and $G(\alpha;n,an-t,k)\ne\emptyset$, where $0\le t<n$. Then $\alpha>\frac{t}k$. If, in addition, $k<n$ and $l$, $m$ are defined as in Proposition \ref{prop03}, then
\[\frac{t}k<\alpha<\frac{d}{n-k}-\frac{mn}{k(n-k)}.\]
\end{prop}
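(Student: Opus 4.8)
The plan is to extract both inequalities from $\alpha$-stability applied to carefully chosen subsystems of a general $(E,V)\in G(\alpha;n,an-t,k)$, using the structural description in Proposition \ref{prop03}. For the lower bound $\alpha>\frac tk$, I would argue as follows. By Lemma \ref{l01} we may write $E\cong\bigoplus\cO(a_i)$ with all $a_i>0$; since $d=an-t$ with $0\le t<n$, at least $t$ of the summands satisfy $a_i\le a-1$ (indeed, by Proposition \ref{prop03} for $k\ge n$, and by the explicit form for $0<k<n$, a general $E$ is $\cO(a)^{n-t}\oplus\cO(a-1)^t$, but for the lower bound one only needs a sub-line-bundle of small degree). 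Take a line subbundle $\cO(a_n)\subset E$ of minimal degree $a_n$, and set $V':=V\cap H^0(\cO(a_n))$ with $k':=\dim V'$. Then $(\cO(a_n),V')$ is a proper subsystem, so $\alpha$-stability gives $a_n+\alpha k'<\frac{d+\alpha k}{n}$. The point is to choose the situation so that $k'$ is as large as possible relative to the deficiency $t$; taking instead the subsystem generated by the $\cO(a-1)^t$ part (with $V'$ its sections contained in $V$), a dimension count on $H^0$ forces $k'\ge k-(n-t)(a+1)-\dots$, and rearranging $\frac{d+\alpha k_1}{n_1}\le(a-1)+\alpha\cdot\frac{k_1}{t}$ against $\mu_\alpha(E,V)=a+\alpha\frac kn-\frac tn$ yields $\alpha>\frac tk$ after clearing denominators. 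The cleanest route is probably to reduce to the rank-one subsystem realizing $\mu_\alpha$-instability as $\alpha\to0$ and note that $\mu_0(E,V)=\frac dn=a-\frac tn$ is not an integer when $t\not\equiv0$, while any sub-line-bundle has integer slope $a_i\le a$; the gap is at least $\frac tn$, and carrying the $\alpha k$-terms gives the stated bound. I would present whichever of these is shortest once the bookkeeping is done.

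For the upper bound when $k<n$, I would use the quotient $G$ in the exact sequence $0\to\cO^k\to E\to G\to0$ of Proposition \ref{prop03}, where $G\cong\cO(a+l+1)^m\oplus\cO(a+l)^{n-k-m}$. Any quotient $(E,V)\twoheadrightarrow(E'',V'')$ gives a subsystem as its kernel, and $\alpha$-semistability of $(E,V)$ is equivalent to $\mu_\alpha(E'',V'')\ge\mu_\alpha(E,V)$ for all such quotients (by taking complements of subsystems). Consider the quotient of $G$ onto $\cO(a+l)^{n-k-m}$ (the smaller-degree part), pulled back to a quotient $(E,V)\to(E'',0)$ with $E''=\cO(a+l)^{n-k-m}$ of rank $n-k-m$, degree $(n-k-m)(a+l)$, and $k''=0$ sections in the image (here I use that $V=H^0(\cO^k)$ maps to zero in $G$). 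Then $\alpha$-stability forces
\[
a+l<\frac{d+\alpha k}{n},
\]
and solving for $\alpha$ gives $\alpha>\frac{n(a+l)-d}{k}$. That is a lower bound, not the claimed upper bound, so instead I would take the \emph{other} quotient, $(E,V)\to(\cO(a+l+1)^m,0)$ of rank $m$ and degree $m(a+l+1)$ with no sections in the image, whose kernel $(E',V')$ has $V'=V$ and thus $k'=k$; $\alpha$-stability of $(E,V)$ reads $\mu_\alpha(E',V')<\mu_\alpha(E,V)$, i.e.
\[
\frac{(d-m(a+l+1))+\alpha k}{n-m}<\frac{d+\alpha k}{n}.
\]
Clearing denominators, the $\alpha k$-terms combine to $\alpha k\bigl(\frac1{n-m}-\frac1n\bigr)=\alpha k\cdot\frac m{n(n-m)}$, and the degree terms give $-\frac{m(a+l+1)}{n-m}+\frac{m\,d}{n(n-m)}$; multiplying through by $\frac{n(n-m)}{m}$ and using $d=an-t$ together with $ka-t=l(n-k)+m$ to eliminate $l$ and $t$, the inequality collapses to exactly $\alpha<\frac{d}{n-k}-\frac{mn}{k(n-k)}$.

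The main obstacle is purely bookkeeping: verifying that the chosen subsystems (or quotients) are genuinely proper and have the claimed invariants — in particular that $V$ maps isomorphically into the $\cO^k$-part and to zero in $G$, so that the induced subsystems have the section-counts used above — and then pushing the arithmetic with the three defining relations $d=an-t$, $0\le t<n$, $ka-t=l(n-k)+m$, $0\le m<n-k$ through without sign errors. One subtlety worth flagging: for the upper bound one should check the subsystem $(E',V')$ with $k'=k$ is proper, i.e. $m\ge1$; if $m=0$ the asserted bound $\frac d{n-k}-0=\frac{d}{n-k}$ is just Lemma 4.2 of \cite{bgmn} (quoted in the excerpt), so that degenerate case is already known and can be dispatched separately. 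I would organize the write-up as: (1) recall structure from Proposition \ref{prop03}; (2) lower bound via a minimal-degree sub-line-bundle; (3) upper bound via the rank-$m$ quotient with no sections; (4) the $m=0$ remark.
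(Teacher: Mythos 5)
Both halves of your argument contain direction errors in the stability inequality, and neither, as written, yields the stated bounds. (Note also that the paper only reproves the lower bound itself, in Proposition \ref{prop1}; the rest is quoted from \cite{ln1}.) For the lower bound, the subsystems you propose cannot work: a lower bound on $\alpha$ has to come from a subsystem whose degree slope exceeds $d/n$ while its section slope is below $k/n$. Your minimal-degree line subbundle $(\cO(a_n),V\cap H^0(\cO(a_n)))$ and your subsystem supported on the $\cO(a-1)^t$ part both have degree slope at most $a-1<d/n$, so $\alpha$-stability applied to them gives either an upper-bound-type constraint or nothing; indeed, rearranging your inequality $(a-1)+\alpha k_1/t<a-\frac tn+\alpha\frac kn$ only bounds $\alpha(k_1/t-k/n)$ above by $(n-t)/n$, and this never produces $\alpha>\frac tk$. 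The argument the paper uses is the opposite one: for the general $(E,V)$ with $E\cong\cO(a)^{n-t}\oplus\cO(a-1)^t$ and $t>0$, take the proper subsystem $(\cO(a)^{n-t},0)$ with \emph{no} sections; stability gives $a<a-\frac tn+\alpha\frac kn$, i.e.\ $\alpha>\frac tk$ (the case $t=0$ being vacuous). Your ``cleanest route'' gestures at this but has the degree inequality backwards: what is needed is a subbundle of degree at least $a=\lceil d/n\rceil>d/n$ taken with $V'=0$, not the observation that every summand has degree at most $a$.

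For the upper bound, the first quotient you wrote down, $(E,V)\twoheadrightarrow(\cO(a+l)^{n-k-m},0)$, is in fact the right one, but you applied the stability condition with the wrong sense: for a quotient system it reads $\mu_\alpha(\text{quotient})>\mu_\alpha(E,V)$ (equivalently, use the kernel subsystem, the preimage of $\cO(a+l+1)^m$, which has rank $k+m<n$, degree $m(a+l+1)$ and contains all of $V$, so $k_1\ge k$). This gives $a+l>\frac{d+\alpha k}{n}$, hence $\alpha k<n(a+l)-d=nl+t=k(a+l)-m$, i.e.\ exactly $\alpha<a+l-\frac mk=\frac{d}{n-k}-\frac{mn}{k(n-k)}$. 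Because you reversed the inequality you discarded this quotient and switched to the kernel of $E\to\cO(a+l+1)^m$; carrying out that arithmetic honestly gives $\alpha km<nm(a+l+1)-md$, i.e.\ $\alpha<a+l+\frac{n-m}{k}$, which is weaker than the stated bound by $\frac nk$, so the claim that it ``collapses to exactly'' $\frac{d}{n-k}-\frac{mn}{k(n-k)}$ is false. Your structural input (Proposition \ref{prop03}, with $V=H^0(\cO^k)$ mapping to zero in $G$) and the $m=0$ remark are fine; the fix is to keep your first quotient and use the correct inequality.
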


\begin{lem}\label{lem02}\cite[Lemma 3.9]{ln2}
Suppose that $(F,W)$ is $\alpha$-stable with $\mu_\alpha(F,W)=b$ and that 
\begin{equation}\label{eq05}
\dim\Ext^1((F,W),(\cO(b),0))\ge r
\end{equation}
Then the general extension
\[0\lra(\cO(b)^r,0)\lra(E,V)\lra(F,W)\lra0\]
is $\alpha^+$-stable.
\end{lem}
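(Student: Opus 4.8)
The plan is to show that a general extension class is \emph{non-degenerate} in a suitable sense and that non-degeneracy forces $\alpha^+$-stability. We may assume $r\ge1$ (otherwise $(E,V)=(F,W)$ and there is nothing to prove); then $(F,W)$ cannot have $k=0$, for in that case $F$ would be a stable, hence rank-one, bundle $\cO(b)$ and $\Ext^1((F,W),(\cO(b),0))=H^1(\cO)=0$, contradicting \eqref{eq05}. Let $(F,W)$ have type $(n_0,d_0,k_0)$ with $k_0\ge1$; then $(E,V)$ has type $(n_0+r,\,d_0+rb,\,k_0)$, and a direct computation gives $\mu_\alpha(E,V)=b=\mu_\alpha(\cO(b)^r,0)$, while $\mu_{\alpha'}(E,V)>b$ for $\alpha'>\alpha$ since $k_0>0$.

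Put $U:=\Ext^1((F,W),(\cO(b),0))$; additivity gives $\Ext^1((F,W),(\cO(b)^r,0))\cong U\otimes\CC^r$, and by \eqref{eq05} a general extension class corresponds to an \emph{injective} linear map $\phi\colon(\CC^r)^*\to U$ (injectivity is possible precisely because $\dim U\ge r$, and is the generic condition); call such a class non-degenerate. The algebraic input, to be established by a pushout argument, is: for a subspace $S\subsetneq\CC^r$, the image of the class under the natural map $\Ext^1((F,W),(\cO(b)^r,0))\to\Ext^1((F,W),\cO(b)\otimes(\CC^r/S))$ vanishes if and only if $(E,V)$ admits a subsystem $(E',V')$ with $(E',V')\cap(\cO(b)^r,0)=(\cO(b)\otimes S,0)$ mapping isomorphically onto $(F,W)$; moreover non-degeneracy of $\phi$ is equivalent to all these images (over all proper $S$, including $S=0$) being non-zero.

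For the stability analysis, take a proper subsystem $(E',V')\subsetneq(E,V)$ and set $(E_1,V_1)=(E',V')\cap(\cO(b)^r,0)$, with $(E_2,V_2)$ the image of $(E',V')$ in $(F,W)$, so that $\mu_{\alpha'}(E',V')$ is the weighted average, by ranks, of $\mu_{\alpha'}(E_1,V_1)$ and $\mu_{\alpha'}(E_2,V_2)$. Here $V_1=0$, and a rank-$n_1$ subsheaf of $\cO(b)^r$ has degree $\le n_1 b$ with equality only for a subbundle of the form $\cO(b)\otimes S$; hence $\mu_{\alpha'}(E_1,V_1)\le b$, with equality only in that split-off case. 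If $(E_2,V_2)$ is a proper subsystem of $(F,W)$, then $\alpha$-stability of $(F,W)$ gives $\mu_\alpha(E_2,V_2)<b$; since the degrees of subsheaves of $F$ are bounded above and $\dim V_2\le k_0$, only finitely many triples $(n_2,d_2,k_2)$ with $\mu_\alpha(E_2,V_2)$ near $b$ occur, so that supremum is $<b$ and one gets $\epsilon>0$ with $\mu_{\alpha'}(E_2,V_2)<b$ on $(\alpha,\alpha+\epsilon)$; if $(E_2,V_2)=(F,W)$, then $\mu_\alpha(E_2,V_2)=b$. Case by case: if $E_2=0$ then $V'=0$ and $\mu_{\alpha'}(E',V')\le b<\mu_{\alpha'}(E,V)$; if $(E_2,V_2)\subsetneq(F,W)$, the weighted average is bounded below $b$, hence $<\mu_{\alpha'}(E,V)$; if $(E_2,V_2)=(F,W)$ with $\deg E_1<n_1 b$, a one-line computation gives $\mu_\alpha(E',V')<\mu_\alpha(E,V)$ already at $\alpha'=\alpha$, with a uniform gap; and the only remaining case --- $(E_2,V_2)=(F,W)$ with $(E_1,V_1)=(\cO(b)\otimes S,0)$, $\dim S<r$ (or $S=0$) --- is exactly where $\mu_{\alpha'}(E',V')>\mu_{\alpha'}(E,V)$ for $\alpha'>\alpha$, and by the algebraic input this subsystem does not exist for a non-degenerate class. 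Shrinking $\epsilon$ to cover the finitely many relevant types, $(E,V)$ is $\alpha'$-stable for $\alpha'\in(\alpha,\alpha+\epsilon)$, i.e.\ $\alpha^+$-stable.

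The main obstacle is that last case: the pushout computation showing that, for a general (non-degenerate) extension, no subsystem splits off a subbundle $\cO(b)^s$ with $s<r$ while still surjecting onto $(F,W)$. This is exactly where \eqref{eq05} enters; the rest is slope bookkeeping.
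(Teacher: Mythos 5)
Your proof is correct, and it is essentially the standard argument behind this lemma (which the paper only quotes from \cite{ln2} without proof): the only subsystems that can violate $\alpha^+$-stability of an $\alpha$-semistable extension of this shape are those meeting $(\cO(b)^r,0)$ in some $(\cO(b)\otimes S,0)$ with $S\subsetneq\CC^r$ and surjecting onto $(F,W)$, and these correspond to vanishing of the pushed-out class in $U\otimes(\CC^r/S)$, which is excluded for a non-degenerate (linearly independent) $r$-tuple of classes in $U=\Ext^1((F,W),(\cO(b),0))$, available exactly because $\dim U\ge r$. This is the same linear-independence mechanism the paper itself uses in Propositions \ref{prop5}, \ref{prop9} and \ref{prop7}, and the step you defer (pushout splits iff such a subsystem exists, plus the slope bookkeeping with finitely many numerical types) is standard and correctly stated.
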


\subsection{Special cases} There are some further results from \cite{ln1,ln2} which are of interest in the context of existence of $\alpha$-stable coherent systems on $\PP^1$. The results of \cite{ln3} do not concern us, as they relate to the structure of the moduli spaces rather than to the existence problem. 
\begin{theorem}\label{t02}\cite[Theorem 4.5]{ln2}
Suppose $0<k<n$. Then $G(0^+;n,an,k)\ne\emptyset$ if and only if $k((a+1)n-k)\ge n^2-1$.
\end{theorem}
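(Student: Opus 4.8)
The plan is to prove Theorem \ref{t02} by combining the numerical necessary conditions already recorded in the excerpt with an explicit construction for the sufficiency direction. For the necessity, observe that if $G(0^+;n,an,k)\ne\emptyset$ then by Theorem \ref{t01} we have $\beta(n,an,k)\ge0$; substituting $g=0$ and $d=an$ into \eqref{eq01} gives $-n^2+1-k(k-an+n(-1)+ \text{(sign bookkeeping)})$, which after simplification is exactly the inequality $k((a+1)n-k)\ge n^2-1$. So the necessity is essentially just the expected-dimension bound, with the mild point that one must check $\beta\ge0$ is not only necessary but here coincides with the stated inequality; the positivity of $\alpha$ (here $\alpha=0^+$) is automatic from Lemma \ref{l01} and Proposition \ref{prop04} since $d=an$ forces $t=0$, so the lower bound $\alpha>t/k=0$ imposes nothing.

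For sufficiency, assume $k((a+1)n-k)\ge n^2-1$ with $0<k<n$. I would use the structure in Proposition \ref{prop03}: for $0<k<n$ the general member of a nonempty $G(\alpha;n,an,k)$ sits in $0\to\cO^k\to E\to G\to0$ with $E\cong\cO(a)^{n-t}\oplus\cO(a-1)^t$; since $t=0$ here, $E\cong\cO(a)^n$ and $G$ is a direct sum of copies of $\cO(a+l+1)$ and $\cO(a+l)$ determined by $ka=l(n-k)+m$. The strategy is to \emph{reverse} this: build $(E,V)$ as an extension $0\to(\cO^k,V)\to(E,V)\to(G,0)\to0$ with $G$ the prescribed generic-splitting-type bundle of rank $n-k$ and degree $ka$, take a generic such extension, and verify $0^+$-stability. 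To control stability of the extension I would invoke Lemma \ref{lem02}-type reasoning (or argue directly that for $\alpha$ small and positive, $\mu_\alpha$ of any proper subsystem is dominated, using that $(\cO^k,V)$ and $(G,0)$ are themselves semistable in the relevant sense and that the $\Ext^1$ computed via Proposition \ref{prop01} is large enough — this is where the inequality $k((a+1)n-k)\ge n^2-1$ re-enters, guaranteeing the extension space is nonempty and the generic extension is actually $\alpha$-stable). One checks $\Ext^2$ vanishes by Lemma \ref{l01}, so the dimension count from \eqref{eq02}--\eqref{eq03} with $d_1=0, n_1=k, d_2=ka, n_2=n-k, g=0$ gives $\dim\Ext^1 = C_{21}$, and $C_{21}\ge$ (something) precisely when the stated inequality holds.

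The main obstacle I anticipate is the stability verification for the generic extension at $\alpha=0^+$: one must rule out destabilizing subsystems $(E',V')$ with $k'/n'$ too large, and near $\alpha=0$ the $\alpha k$ term is a small perturbation, so the argument must be done carefully at $\alpha=0$ (semistability, where $\cO^k$ and $G$ might share the slope of $E$ in the bundle sense since everything has slope $a$) and then perturbed. Concretely, $\mu_0(E',V') = d'/n'$ and all sub-bundles of $E\cong\cO(a)^n$ have slope $\le a = d/n$, so $0$-semistability as a bundle is automatic; the real content is that for the generic extension no subsystem achieves equality in the $\alpha$-slope for small positive $\alpha$, equivalently that $V'$ cannot be too large on any such $E'$. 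I would handle this by a dimension count: the locus of extensions admitting a bad subsystem has codimension governed by $\Ext$-group dimensions, and $k((a+1)n-k)\ge n^2-1$ is exactly the condition making the total extension space big enough that a generic point avoids all these bad loci. The borderline case $k((a+1)n-k)=n^2-1$ (where $\beta=0$ and the moduli space is a point) will likely need separate, more explicit treatment.
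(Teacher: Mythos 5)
Your proposal addresses a statement that this paper does not prove at all: Theorem \ref{t02} is quoted from \cite[Theorem 4.5]{ln2} in the survey section, so it can only be judged on its own merits. The necessity half is fine and is the intended argument: by Theorem \ref{t01}, non-emptiness forces $\beta(n,an,k)\ge0$, and for $g=0$, $d=an$ formula \eqref{eq01} gives $\beta(n,an,k)=k((a+1)n-k)-n^2+1$, which is exactly the stated inequality.

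The sufficiency half has a genuine gap, and the place where you claim the hypothesis enters is not where it can enter. First, in the sequence $0\to\cO^k\to E\to G\to0$ of Proposition \ref{prop03} the quotient $G$ has rank $n-k$ and degree $an$ (not $ka$), and computing $\Ext^1((G,0),(\cO^k,V))$ via \eqref{eq02}--\eqref{eq04} (here ${\mathbb H}^0_{21}=\Hom(G,\cO^k)=0$ and ${\mathbb H}^2_{21}=0$ because $V_2=0$) gives dimension $C_{21}=k(an-n+k)$, which is positive for every $a\ge1$ and $0<k<n$. So the extension space is large and non-empty whether or not $k((a+1)n-k)\ge n^2-1$; the hypothesis cannot be recovered from the size of this $\Ext^1$, and Lemma \ref{lem02} does not apply to this configuration (there the section-free piece $(\cO(b)^r,0)$ is the sub and the $\alpha$-slopes match, whereas here the sub $(\cO^k,V)$ carries all the sections). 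Second, the entire content of the ``if'' direction is the stability check you defer. Since $E\cong\cO(a)^n$, the only subsystems that can violate stability for all small $\alpha>0$ are $(E_1,V_1)$ with $E_1\cong\cO(a)^{n_1}$ a slope-$a$ subbundle (these form a family of dimension $n_1(n-n_1)$, parametrized by $\Gr(n_1,n)$) together with $\dim\bigl(V\cap H^0(E_1)\bigr)\ge kn_1/n$. Proving that a generic $V\in\Gr(k,n(a+1))$ avoids all of these requires, for each $1\le n_1\le n-1$, comparing $n_1(n-n_1)$ with the Schubert-type codimension $j\bigl((a+1)(n-n_1)-k+j\bigr)$, where $j=\lceil kn_1/n\rceil$ (suitably adjusted when $n$ divides $kn_1$), of the locus of $V$ meeting a fixed $n_1(a+1)$-dimensional subspace in dimension at least $j$, and then showing that these inequalities hold for every $n_1$ precisely when $k((a+1)n-k)\ge n^2-1$, including the borderline case $\beta=0$. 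This count --- the analogue, for $k<n$ and $\alpha=0^+$, of Lemmas \ref{lem1} and \ref{lem2} of the present paper --- is the theorem, and your proposal does not carry it out.
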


\begin{prop}\label{prop05}\cite[Corollaries 5.9 and 5.10]{ln2}
Let $k$ be a fixed positive integer and let $l$ be defined as in Proposition \ref{prop03}. Then, for all but finitely many pairs $(n,d)$ with $n>k$, one of the following two possibilities holds:
\begin{itemize}
\item $G(\alpha;n,d,k)=\emptyset$ for all $\alpha>0$;
\item $G(\alpha;n,d,k)\ne\emptyset$ for all $\alpha$ such that $\frac{t}k<\alpha<\frac{ln+t}k$.
\end{itemize}
Moreover, for all but finitely many pairs $(n,a)$ with $n>k$ and $k((a+1)n-k)\ge n^2-1$, the moduli space $G(\alpha;n,an,k)\ne\emptyset$ if and only if $0<\alpha<\frac{ln}k$.
\end{prop}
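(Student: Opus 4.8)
The plan is to show that, for all but finitely many pairs $(n,d)$ with $n>k$, the nonemptiness interval $I(n,d,k)$ of Theorem \ref{t01}(ii) is either empty or equal to the whole open interval $\left(\frac{t}{k},\frac{ln+t}{k}\right)$; the two bullets are then a restatement. The inclusion $I(n,d,k)\subseteq\left(\frac{t}{k},\frac{ln+t}{k}\right)$ holds for every $(n,d)$: writing $m=ka-t-l(n-k)$, a one-line computation gives
\[
\frac{d}{n-k}-\frac{mn}{k(n-k)}=\frac{ln+t}{k},
\]
so this is exactly Proposition \ref{prop04}. Since $I(n,d,k)$ is an open interval, it remains — assuming $I(n,d,k)\neq\emptyset$ — to prove $\inf I(n,d,k)=\frac{t}{k}$ and $\sup I(n,d,k)=\frac{ln+t}{k}$. (A useful reduction: tensoring by $\cO(1)$ gives an injection $G(\alpha;n,d,k)\hookrightarrow G(\alpha;n,d+n,k)$ — saturated subsystems correspond under $F'\mapsto F'(1)$ and every $\alpha$-slope increases by $1$ — so $I(n,d,k)\subseteq I(n,d+n,k)$; for fixed $n$ the interval only grows with $d$, which trims the cases to be checked by hand.)

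For the upper end I would produce an $\alpha$-stable coherent system of type $(n,d,k)$ for $\alpha$ arbitrarily close to $\frac{ln+t}{k}$ by a genericity argument: take $E$ of generic splitting type $\cO(a)^{n-t}\oplus\cO(a-1)^t$ (as forced by Proposition \ref{prop03}) and $V$ a general $k$-dimensional subspace of $H^0(E)$, and bound $\dim\left(V\cap H^0(E_1)\right)$ for each isomorphism type of subsheaf $E_1\subseteq E$; the resulting inequalities for $\alpha$-stability are governed by $\beta$-type quantities and by the $\Ext^1$-formula of Proposition \ref{prop01} (with $\mathbb H^2_{21}$ vanishing by Lemma \ref{l01} once the relevant pieces are semistable), and they hold in the required range of $\alpha$ except for finitely many $(n,d)$. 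Alternatively one reaches the same conclusion from a smaller known case by the extension construction of Lemma \ref{lem02}. Openness (Theorem \ref{t01}(ii)) then forces $\sup I(n,d,k)=\frac{ln+t}{k}$.

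For the lower end the key input is Theorem \ref{t02}, which settles $t=0$: there $\inf I(n,an,k)=0=\frac{t}{k}$ exactly when $\beta(n,an,k)=k((a+1)n-k)-(n^2-1)\ge0$. For general $t$ one passes to this case by an elementary-modification argument — realizing a general element of type $(n,an-t,k)$ from one of type $(n,an,k)$ by a length-$t$ torsion quotient while tracking the sections — and checks $\alpha$-stability for $\alpha$ just above $\frac{t}{k}$; again this holds outside a finite set. Combining the two ends gives the dichotomy, and the ``moreover'' statement follows at once: for $d=an$ one has $t=0$ and the endpoints are $0$ and $\frac{ln}{k}$, so by Theorem \ref{t01} the interval is empty when $\beta(n,an,k)<0$, while when $\beta(n,an,k)\ge0$ Theorem \ref{t02} excludes the empty alternative and the dichotomy forces $G(\alpha;n,an,k)\neq\emptyset$ precisely for $0<\alpha<\frac{ln}{k}$, for all but finitely many pairs $(n,a)$ with $n>k$ and $k((a+1)n-k)\ge n^2-1$.

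The main obstacle is the lower end, $\alpha\to\left(\frac{t}{k}\right)^+$: there every $\cO(a)$-summand of $E$ gives a subsystem whose $\alpha$-slope tends to $\mu_\alpha(E,V)$, so the $\alpha$-stable locus is extremely thin and the constructions are delicate; alongside this, the genuinely fiddly bookkeeping is pinning down exactly which small pairs $(n,d)$ have to be discarded at the two ends.
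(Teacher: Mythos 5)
This statement is not proved in the paper at all: Proposition \ref{prop05} is quoted verbatim from \cite[Corollaries 5.9 and 5.10]{ln2}, so there is no internal proof to compare with; the comparison has to be with the method of the cited source, which rests on a detailed existence analysis near both ends of the allowed range of $\alpha$ (not reproduced here).

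Measured against that, your proposal has a genuine gap. The framing is fine: the identity $\frac{d}{n-k}-\frac{mn}{k(n-k)}=\frac{ln+t}{k}$ is correct, so Proposition \ref{prop04} does give $I(n,d,k)\subseteq\left]\frac{t}{k},\frac{ln+t}{k}\right[$, and by Theorem \ref{t01}(ii) the whole proposition reduces to showing that, outside a finite set of pairs $(n,d)$, non-emptiness forces non-emptiness arbitrarily close to both endpoints; the deduction of the ``moreover'' part from Theorem \ref{t02} plus this dichotomy is also sound. But the two steps that carry all the weight are only asserted, not proved. For the upper end you say the stability inequalities for a general $V$ ``hold in the required range of $\alpha$ except for finitely many $(n,d)$'', and for the lower end that an elementary-modification construction near $\alpha=\frac{t}{k}$ ``holds outside a finite set'' --- but no mechanism is given for why the failures are confined to finitely many pairs for fixed $k$, and that finiteness is precisely the content of the result. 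That these steps are genuinely delicate is visible already inside this paper: Proposition \ref{prop07} shows $G(\alpha;6,7,4)\ne\emptyset$ exactly for $\frac54<\alpha<2$ although the expected upper endpoint is $\frac{11}4$, and Theorem \ref{t03} lists cases such as $(n,d,k)=(4,7,3)$ with $I=\left]\frac35,7\right[$ where the lower endpoint $\frac{t}{k}=\frac13$ is not attained; so any correct argument must identify exactly where the genericity and modification arguments break down and show that, for fixed $k$, this happens only finitely often. In addition, the lower-end step is itself unsubstantiated: passing from type $(n,an,k)$ to $(n,an-t,k)$ by a torsion quotient while keeping $V$ inside $H^0$ of the subsheaf, and then verifying $\alpha$-stability for $\alpha$ just above $\frac{t}{k}$ (where, as you note, every $\cO(a)$-summand nearly destabilizes), is the hard analysis that occupies the relevant sections of \cite{ln2}; sketching it as a plan does not establish the proposition.
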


\begin{prop}\label{prop06}\cite[Proposition 5.12]{ln2}
Suppose $k\ge2$, $(a+1)k\ge n+t$ and that one of the following three conditions holds:
\begin{itemize}
\item $t=1$ and $a\ge k$;
\item $t=k-1$ and $a\ge2$;
\item $t=k$ and $a\ge3$.
\end{itemize}
Then $G((t/k)^+;n,an-t,k)\ne\emptyset$.
\end{prop}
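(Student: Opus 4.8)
The plan is to construct, for each of the three listed cases, an explicit coherent system $(E,V)$ of type $(n,an-t,k)$ which we can check is $(t/k)^+$-stable, using Lemma \ref{lem02} as the main engine. The common strategy will be to start from a coherent system $(F,W)$ that is already known to exist and is $\alpha$-stable with the right $\alpha$-slope, and then build $(E,V)$ as a general extension $0\to(\cO(b)^r,0)\to(E,V)\to(F,W)\to0$; by Lemma \ref{lem02} this extension is $\alpha^+$-stable provided $b=\mu_\alpha(F,W)$ and $\dim\Ext^1((F,W),(\cO(b),0))\ge r$. Since $b$ must be the integer $\mu_\alpha(F,W)$ at a critical value, the natural choice is to take $\alpha=t/k$ (or a value just above it) so that $\mu_\alpha(E,V)$ and $b$ work out to integers, and to choose $(F,W)$ so that adding $r$ copies of $(\cO(b),0)$ lands us at type $(n,an-t,k)$ — i.e. $(F,W)$ has type $(n-r,(an-t)-rb,k)$ for a suitable small $r$.

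First I would fix notation: write $d=an-t$ so $\mu_\alpha(E,V)=(d+\alpha k)/n$; at $\alpha=t/k$ this equals $(an-t+t)/n=a$, so the relevant value of $b$ for the $\alpha^+$-stable extension is $b=a$ (this also matches Lemma \ref{lem02} in that $\cO(b)=\cO(a)$ should be the line-bundle summand of lowest degree appearing, consistent with $E\cong\cO(a)^{n-t}\oplus\cO(a-1)^t$ from Proposition \ref{prop03}). Actually I expect the cleaner route is to peel off the copies of $\cO(a)$: take $(F,W)$ to be of type $(t,at-t,k)=(t,(a-1)t,k)$, ... wait — one must be careful which summands to peel. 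In the case $t=1$, $E\cong\cO(a)^{n-1}\oplus\cO(a-1)$, so peeling off $\cO(a)^{r}$ with $r=n-t-$(something) leaves a system whose bundle is $\cO(a)^{?}\oplus\cO(a-1)$; the hypothesis $a\ge k$ should guarantee $h^0$ is large enough that a general $k$-dimensional $W$ makes the smaller system $\alpha$-stable (for $t=1$, $k\le n$-type arguments or an induction on $n$ supply the base case). Similarly for $t=k-1$ and $t=k$ the inequalities $a\ge2$ and $a\ge3$ are exactly what is needed to keep the residual bundle's sections plentiful and to verify the numerical $\alpha$-stability condition $k_1/n_1<k/n$ for all proper subsystems (via Proposition \ref{prop02}) of a chosen "seed" system of small rank, after which Lemma \ref{lem02} propagates stability up to rank $n$.

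So the key steps, in order, are: (1) translate the target type $(n,an-t,k)$ and the value $\alpha=(t/k)^+$ into the requirement $b=a$ and pin down the rank-$r$ and residual-type bookkeeping, checking that $(a+1)k\ge n+t$ is precisely the inequality that makes $h^0(E)\ge k$, i.e. makes a $k$-dimensional $V$ possible for the generic-splitting-type $E$; (2) in each of the three cases exhibit a low-rank seed coherent system (likely of rank $t$ or $k$, or even rank $1$ when $t=1$) and verify directly, using Proposition \ref{prop02}'s criterion $k_1/n_1<k/n$, that it is $\alpha$-stable at the relevant $\alpha$ — here the case conditions $a\ge k$, $a\ge2$, $a\ge3$ enter; (3) compute $\dim\Ext^1((F,W),(\cO(a),0))$ using the formula \eqref{eq02}–\eqref{eq04} from Proposition \ref{prop01}, noting that the $\mathbb{H}^2$ term vanishes by Lemma \ref{l01} once $(F,W)$ is $\alpha$-semistable and $(\cO(a),0)$ is semistable, so $\dim\Ext^1=C_{21}+\dim\mathbb{H}^0_{21}\ge C_{21}$, and check $C_{21}\ge r$; (4) invoke Lemma \ref{lem02} to conclude the general extension is $(t/k)^+$-stable of the correct type $(n,an-t,k)$, so $G((t/k)^+;n,an-t,k)\ne\emptyset$. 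The main obstacle I anticipate is step (2)–(3) in the boundary subcases where $a$ equals its minimal allowed value ($a=k$ when $t=1$, $a=2$ when $t=k-1$, $a=3$ when $t=k$): there the numerical inequalities are tight, $C_{21}$ or $h^0(E)-k$ can drop to zero, and one must choose the seed system and the flag of extensions very carefully (possibly iterating Lemma \ref{lem02} several times rather than in one step, and occasionally appealing to Theorem \ref{t01}(ii) to move $\alpha$ off a critical value) to avoid accidentally producing a strictly semistable — but not stable — object or a bundle of the wrong splitting type.
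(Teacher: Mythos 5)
Your overall strategy is the right one: the statement (quoted in this paper from \cite{ln2}) is indeed proved by building $(E,V)$ as a general extension of a seed coherent system $(F,W)$ by copies of $(\cO(a),0)$ and invoking Lemma \ref{lem02} at $\alpha=t/k$, where $b=\mu_{t/k}(F,W)=a$; this is exactly the mechanism of Proposition \ref{prop10}, whose Theorem \ref{t4}(b) generalises the present proposition. However, two concrete points in your plan are wrong or left unresolved. First, you misassign the role of the hypothesis $(a+1)k\ge n+t$: it is not the condition $h^0(E)\ge k$ (that would read $(a+1)n-t\ge k$). Its actual job is the Ext bound of your step (3): with the seed $(F,W)$ of type $(t,(a-1)t,k)$ and $r=n-t$, formulas \eqref{eq02}--\eqref{eq04} with $g=0$ give $\dim\Ext^1((F,W),(\cO(a),0))\ge C_{21}=-t+(a-1)t-at+(a+1)k=(a+1)k-2t$, and $(a+1)k-2t\ge n-t$ is precisely $(a+1)k\ge n+t$. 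As written, your proposal never actually brings this hypothesis to bear where it is needed.

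Second, the seed is never pinned down, and that is exactly where the three case hypotheses enter. The correct seed is always of type $(t,(a-1)t,k)$, i.e.\ one peels off all $n-t$ copies of $\cO(a)$: for $t=1$ it is $(\cO(a-1),W)$ with $\dim W=k$, which exists if and only if $k\le h^0(\cO(a-1))=a$ and is $\alpha$-stable for every $\alpha$ because it has rank one --- that is the entire content of $a\ge k$; for $t=k-1$ and $t=k$ the seed has $k=\mathrm{rank}+1$ and $k=\mathrm{rank}$ respectively, and its non-emptiness at $\alpha=t/k$ comes from Proposition \ref{prop08} (note $(a-1)t$ is a multiple of the rank $t$, so the relevant lower bound on $\alpha$ is $0$), which requires $(a-1)(k-1)\ge k-1$, i.e.\ $a\ge2$, respectively $(a-1)k>k$, i.e.\ $a\ge3$. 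Your alternatives do not work as stated: a partially peeled seed with bundle $\cO(a)^s\oplus\cO(a-1)$ requires knowing non-emptiness of a moduli space of essentially the same kind as the one you are trying to reach (circular); checking the seed's stability ``via Proposition \ref{prop02}'' is not legitimate at $\alpha=t/k$, since that criterion only governs $\alpha>d(n-1)$; and no induction on $n$ or iteration of Lemma \ref{lem02} is needed --- the one-step construction works uniformly, including the boundary cases, where $(a+1)k=n+t$ still yields $\dim\Ext^1\ge r=n-t$.
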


Results were also obtained in \cite{ln1,ln2} for certain specific values of $k$.

\begin{theorem}\label{t03}\cite[Theorems 5.1 and 5.4]{ln1}\cite[Theorem 8.4]{ln2}
Let $d=an-t$ with $0\le t<n$ and let $l$ and $m$ be defined as in Proposition \ref{prop03}.
\begin{itemize}
\item If $n\ge2$, $G(\alpha;n,d,1)\ne\emptyset$ for some $\alpha>0$ if and only if $l\ge1$; moreover, when $l\ge1$, $G(\alpha;n,d,1)\ne\emptyset$ if and only if $t<\alpha<\frac{d-mn}{n-1}$.
\item If $n\ge3$, $G(\alpha;n,d,2)\ne\emptyset$ for some $\alpha>0$ if and only if $l\ge1$, $d\ge\frac{n(n-2)+3}2$ and $(n,d)\ne(4,6)$; moreover, when these conditions hold, $G(\alpha;n,d,2)\ne\emptyset$ if and only if  $\frac{t}2<\alpha<\frac{2d-mn}{2(n-2)}$.
\item If $n\ge4$, $G(\alpha;n,d,3)\ne\emptyset$ for some $\alpha>0$ if and only if $l\ge1$, $d\ge\frac{n(n-3)+8}3$ and $(n,d)\ne(6,9)$; moreover, when these conditions hold, $G(\alpha;n,d,3)\ne\emptyset$ if and only if $\frac{t}3<\alpha<\frac{3d-mn}{3(n-3)}$, except for the following pairs $(n,d)$, where $I:=I(n,d,3)$ is as stated: for $(4,7)$: $I=]\frac35,7[$; for $(5,9)$, $I=]\frac34,\frac{11}3[$; for $(6,11)$: $I=]1,\frac73[$; for $(7,13)$: $I=]\frac32,\frac83[$.
\end{itemize}
\end{theorem}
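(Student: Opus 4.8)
The overall plan is to combine the general constraints of Theorem~\ref{t01} and Proposition~\ref{prop04} with explicit existence constructions near the two ends of the predicted interval, and then to dispose of the finitely many exceptional pairs $(n,d)$ by a direct analysis of subsystems. Throughout we have $k<n$, so we are in the range covered by Propositions \ref{prop03} and \ref{prop04}.

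\emph{Necessary conditions.} If $G(\alpha;n,d,k)\ne\emptyset$ for some $\alpha$, then $\beta(n,d,k)\ge0$ by Theorem~\ref{t01}. On $\PP^1$ one computes $\beta(n,d,k)=kd+kn-k^2-n^2+1$, so this is equivalent to $d\ge\frac{n^2-kn+k^2-1}k$, which for $k=2$ reads $d\ge\frac{n(n-2)+3}2$ and for $k=3$ reads $d\ge\frac{n(n-3)+8}3$; for $k=1$ it is a consequence of $l\ge1$. Proposition~\ref{prop04} gives $\frac tk<\alpha<\frac d{n-k}-\frac{mn}{k(n-k)}$, and from $d=an-t$ and $ka-t=l(n-k)+m$ one checks the identity $kd-mn=(n-k)(nl+t)$, so the upper bound equals $\frac{nl+t}k$; in particular existence forces $l\ge1$. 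By Theorem~\ref{t01}(ii) the set $I(n,d,k)$ of $\alpha$ with $G(\alpha;n,d,k)\ne\emptyset$ is an open interval contained in $]\tfrac tk,\tfrac{nl+t}k[$, and it remains to identify it precisely.

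\emph{Existence in the generic case.} Suppose the numerical conditions above hold and $(n,d)$ is not one of the listed exceptions. It is enough to construct $\alpha$-stable coherent systems of type $(n,d,k)$ for values of $\alpha$ arbitrarily close to each endpoint of $]\tfrac tk,\tfrac{nl+t}k[$: an open interval contained in this one and accumulating at both endpoints coincides with it. Near the lower endpoint I would use Proposition~\ref{prop06} wherever its hypotheses apply, and otherwise take $E\cong\cO(a)^{n-t}\oplus\cO(a-1)^t$ of generic splitting type with $V$ generic in $\Gr(k,H^0(E))$ and verify $\alpha$-stability directly: subsystems $(E',V')$ with $V'=0$ impose exactly $\alpha>\tfrac tk$ (the maximal-slope subbundle of $E$ being $\cO(a)$), and those with $V'\ne0$ impose $\alpha<\tfrac{nl+t}k$, the extremal one corresponding, via the sequence $0\to\cO^k\to E\to G\to0$ of Proposition~\ref{prop03}, to deleting one copy of $\cO(a+l)$ from $G$. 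For $k=1$ this verification is short since $\dim V=1$. Near the upper endpoint I would build $(E,V)$ as an iterated extension, starting from an $\alpha$-stable system of smaller rank with (integral) $\alpha$-slope $a+l$ and adjoining copies of $(\cO(a+l),0)$ using Lemma~\ref{lem02}, whose hypothesis is checked via the dimension formula of Proposition~\ref{prop01}; alternatively a duality argument transports the lower-endpoint construction to the upper endpoint.

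\emph{The exceptional pairs, and the main obstacle.} For $(n,d)=(4,6)$ with $k=2$ and $(n,d)=(6,9)$ with $k=3$ one has $\beta=1$ and all the numerical conditions, yet $G(\alpha;n,d,k)=\emptyset$ for every $\alpha$: the point is to show that every coherent system of the given type admits a subsystem violating $\alpha$-stability for all $\alpha$ in $]\tfrac tk,\tfrac{nl+t}k[$, which requires a careful study of the subbundles of $E$ arising as saturations of the subsheaves generated by subspaces of $V$. For the four pairs $(4,7),(5,9),(6,11),(7,13)$ with $k=3$, where $d=2n-1$ so $a=2$ and $t=1$, the upper endpoint of $I(n,d,3)$ is still $\tfrac{nl+t}3$ but the lower endpoint exceeds $\tfrac13$: for generic $V$ the intersection $V\cap H^0(\cO(2)^{n-1})$ is forced to be a line (since $\dim V=3>at=2$), this line generates a sub-$\cO$ of $\cO(2)^{n-1}$ lying in a unique sub-$\cO(2)^3$, and the resulting subsystem of type $(3,6,1)$ destabilises precisely for $\alpha\le\tfrac3{9-n}$; one then pins down $I(n,d,3)=]\tfrac3{9-n},\tfrac{nl+t}3[$, with existence just above the new lower endpoint supplied again by Lemma~\ref{lem02} or a suitable extension. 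The main obstacle throughout is this last kind of analysis: establishing $\alpha$-stability, or locating precisely where it fails, requires controlling, for generic $V$, the function $(E',V')\mapsto\dim(V\cap H^0(E'))$ together with the degrees of the subbundles so produced, over all subbundles $E'\subseteq E$, and it is exactly the finitely many near-critical $(n,d)$ where these quantities are borderline that force the exceptional list.
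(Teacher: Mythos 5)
This statement is not proved in the paper at all: it is quoted in the review section from \cite{ln1} (Theorems 5.1 and 5.4) and \cite{ln2} (Theorem 8.4), so there is no internal proof to compare with, and your proposal has to stand on its own as a reconstruction of those results. As such it is a reasonable strategic outline whose bookkeeping is correct (the identity $kd-mn=(n-k)(nl+t)$, the translation of $\beta\ge0$ into the degree bounds, the interval property from Theorem \ref{t01}(ii) reducing everything to non-emptiness near the two endpoints, and the numerics of the four exceptional $k=3$ pairs, including the lower endpoint $\frac{3}{9-n}$), but it is not a proof: every genuinely hard step is announced rather than carried out.

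Concretely, three gaps remain. First, the lower-endpoint existence: Proposition \ref{prop06} only covers $t\in\{1,k-1,k\}$ under extra hypotheses, and your fallback claim that for generic $V$ the only constraints from subsystems are $\alpha>\frac tk$ (from $V'=0$) and $\alpha<\frac{nl+t}k$ (from $V'\ne0$) is false in general --- the exceptional pairs $(4,7),(5,9),(6,11),(7,13)$ for $k=3$, and the phenomenon in Proposition \ref{prop07}, are exactly counterexamples --- so one must determine precisely for which $(n,d,k)$ the generic subsystem analysis closes up, which is the long case-by-case analysis occupying \cite{ln2}; you defer this as ``the main obstacle''. Second, the emptiness of $G(\alpha;4,6,2)$ and $G(\alpha;6,9,3)$ despite $\beta=1$ is only described (``requires a careful study of the subbundles''), not established; no destabilising subsystem is actually exhibited. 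Third, the upper-endpoint constructions via Lemma \ref{lem02} require both the existence of the lower-rank $\alpha$-stable system of slope $a+l$ and the verification of the $\Ext^1$ dimension bound \eqref{eq05}, neither of which is checked, and the alternative ``duality argument'' is not specified. Until these are supplied (or the results of \cite{ln1,ln2} are simply cited, as the paper does), the proposal is a plan rather than a proof.
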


The following theorem completes the results for $k=2$ and $k=3$.

\begin{theorem}\label{t04}\cite[Proposition 5.6]{ln1}\cite[Theorem 9.2]{ln2}
Let $d=an-t$ with $0\le t<n$.
\begin{itemize}
\item $G(\alpha;2,d,2)\ne\emptyset$ if and only if $d>2$ and $\alpha>\frac{t}2$.
\item $G(\alpha;2,d,3)\ne\emptyset$ for some $\alpha$ if and only if $d\ge2$. Moreover, if $d\ge2$, then $G(\alpha;2,d,3)\ne\emptyset$ for all $\alpha>\frac{t}3$ except in the case $d=3$, when $G(\alpha;2,3,3)\ne\emptyset$ if and only if $\alpha>1$.
\item $G(\alpha;3,d,3)\ne\emptyset$ for some $\alpha$ if and only if $d\ge4$. Moreover, if $d\ge4$, then $G(\alpha;3,d,3)\ne\emptyset$ for all $\alpha>\frac{t}3$ except in the case $d=5$, when $G(\alpha;3,5,3)\ne\emptyset$ if and only if $\alpha>\frac23$.
\end{itemize}
\end{theorem}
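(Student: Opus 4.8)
The plan is to treat the three cases together, in each splitting the claim into a necessity part and an explicit construction, and to deal with the two exceptional values $d=3$ (in the second case) and $d=5$ (in the third) separately. For necessity, Lemma \ref{l01} forces $E\cong\bigoplus_i\cO(a_i)$ with all $a_i>0$; Proposition \ref{prop04} gives $\alpha>t/k$ in all three cases; and Theorem \ref{t01} gives $\beta(n,d,k)\ge0$, which reads $2d-3\ge0$, $3d-6\ge0$ and $3d-8\ge0$ respectively, i.e. $d\ge2$, $d\ge2$ and $d\ge3$. It then remains only to exclude $d=2$ in the first case and $d=3$ in the third (where $\beta\ge0$ but no stable system should exist). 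In those cases $E$ must be $\cO(1)^n$, so $H^0(E)=\CC^2\otimes\CC^n$ and $V$ corresponds to a $(k-1)$-plane in $\PP^{2n-1}$; since $k=n$ here, $(k-1)+n=2n-1=\dim\PP^{2n-1}$, so this plane meets the Segre variety $\PP^1\times\PP^{n-1}$ of rank $\le 1$ tensors, which means $V$ contains a nonzero section lying in some sub-line-bundle $L\cong\cO(1)$. As $k=n$, the subsystem $(L,V\cap H^0(L))$ then has $\alpha$-slope $\ge 1+\alpha=\mu_\alpha(E,V)$, contradicting $\alpha$-stability. (In the second case $k>n$, a sub-line-bundle carrying a single section of $V$ does not destabilise, which is why $d=2$ survives there.) For the two exceptional values I would instead note that $\dim V>h^0(E)-h^0(B)$, where $B$ is the relevant maximal subbundle --- the unique sub-$\cO(2)$ of $\cO(2)\oplus\cO(1)$ when $(n,d,k)=(2,3,3)$, and the unique sub-$\cO(2)^2$ of $\cO(2)^2\oplus\cO(1)$ when $(n,d,k)=(3,5,3)$; hence $V\cap H^0(B)\ne0$, and a slope computation shows that $\dim(V\cap H^0(B))\ge2$ always destabilises, while $\dim(V\cap H^0(B))=1$ destabilises precisely for $\alpha\le1$, resp. $\alpha\le\frac23$.

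For existence, given an admissible $(n,d,k)$ I would take $E$ of generic splitting type and $V$ a generic $k$-dimensional subspace of $H^0(E)$ (in the two exceptional cases $E$ already carries the forced line $V\cap H^0(B)$, and $V$ is taken generic subject to containing it), and show that such $(E,V)$ is $\alpha$-stable for every $\alpha$ above the claimed bound. A proper saturated subsystem $(E_1,V_1)$, with $V_1=V\cap H^0(E_1)$, has $E_1\cong\cO(b)$ with $b\le a$, or --- only when $n=3$ --- $E_1$ a rank $2$ subbundle; for each isomorphism type the subbundles of that type form a family whose dimension is read off from $\Hom(E_1,E)$ and which, on $\PP^1$, is small, and the inequality $\mu_\alpha(E_1,V_1)\ge\mu_\alpha(E,V)$ is linear in $\alpha$, hence can fail for some $\alpha$ above the claimed bound only when $\dim V_1$ exceeds an explicit threshold. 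The crux is that, for generic $V$, $\dim(V\cap H^0(E_1))$ attains its minimum possible value $\max(0,\,k+h^0(E_1)-h^0(E))$ simultaneously for every $E_1$ in each family: this follows because the codimension in $\Gr(k,H^0(E))$ of the locus $\{\,V:\dim(V\cap H^0(E_1))\ge j\,\}$ (for the relevant $j$) strictly exceeds the dimension of the family of the corresponding $E_1$. When $k=n$ the decisive instance is $E_1=L\cong\cO(a)$, for which generically $V\cap H^0(L)=0$ by the same Segre-variety count used above. Checking the finitely many resulting linear inequalities in $\alpha$ then pins down $I(n,d,k)$ exactly; by Theorem \ref{t01}(ii) it is in any case an interval, and its unbounded end can alternatively be obtained from Proposition \ref{prop02}.

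The main obstacle is the existence step. Although it reduces to a finite list of subsystem types, one must check in each (uniformly parametrised, but infinitely many) case that the relevant Schubert codimension strictly beats the dimension of the corresponding family of subbundles; this needs genuine care precisely when $d$ is small, which is also where the two raised bounds $\alpha>1$ and $\alpha>\frac23$ emerge and must be matched by the constructions. The most delicate points are keeping track of the rank $2$ subbundles of $E=\cO(a)^{3-t}\oplus\cO(a-1)^t$ and their families of sections in the $n=3$ case, and verifying that the forced one-dimensional intersection $V\cap H^0(B)$ in the exceptional cases is compatible with $\alpha$-stability for all $\alpha$ above the raised bound.
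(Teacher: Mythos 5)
You should first note that the paper itself does not prove Theorem \ref{t04}: it is quoted as a known result from \cite[Proposition 5.6]{ln1} and \cite[Theorem 9.2]{ln2}, so there is no internal proof to compare with; the cited proofs lean on the structural descriptions available for $k=n$ and $k=n+1$ (cf.\ Proposition \ref{prop03}, extension arguments in the style of Lemma \ref{lem02}) rather than on a pure genericity count. Your outline is a reasonable alternative, and the pieces I checked (the Segre argument excluding $(2,2,2)$ and $(3,3,3)$, which is in effect the $k=n$ part of Proposition \ref{prop08}, and the computation of the raised bounds $\alpha>1$ at $(2,3,3)$ and $\alpha>\frac23$ at $(3,5,3)$ from the forced intersection $V\cap H^0(B)\ne0$) are correct. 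But there are two genuine gaps. The main one is your stated ``crux'': it is \emph{false} that a generic $V$ attains the minimal value $\max(0,k+h^0(E_1)-h^0(E))$ of $\dim(V\cap H^0(E_1))$ simultaneously for every $E_1$ in each family. For example, for $(2,5,3)$ with $E\cong\cO(3)\oplus\cO(2)$, the locus $\{V:\dim(V\cap H^0(L))\ge1\}$ for a fixed sub-$\cO(2)$ $L$ has codimension $2$ in $\Gr(3,7)$, equal to the dimension of the family of such $L$, so every $V$ meets $H^0(L)$ for some $L$; for larger families (e.g.\ rank-$2$ subbundles of $\cO(2)^3$) the minimal value is never attained for all members at once. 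What your argument actually needs --- and what your parenthetical ``for the relevant $j$'' suggests you intend --- is only that $\dim(V\cap H^0(E_1))$ stays strictly below the $\alpha$-dependent destabilising threshold for every member of every family, and for those thresholds the strict inequality ``Schubert codimension $>$ family dimension'' must be verified uniformly in $a$, $t$ and the subbundle type; the borderline equality cases are exactly where the exceptional values $(2,2,2)$, $(3,3,3)$, $(2,3,3)$, $(3,5,3)$ arise. As written, the key genericity lemma would fail; with the threshold formulation the plan goes through, but that uniform verification is the real content and is not yet supplied.

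The second gap is in the non-existence half of the exceptional case $(3,5,3)$: to show emptiness for $0<\alpha\le\frac23$ you only treat $E\cong\cO(2)^2\oplus\cO(1)$, but Lemma \ref{l01} allows $E\cong\cO(3)\oplus\cO(1)^2$ as well. This is easy to repair --- either invoke Theorem \ref{t01}(i) to replace $(E,V)$ by a general element of the moduli space, which has generic splitting type, or observe directly that the subsystem $(\cO(3),0)$ has $\mu_\alpha=3\ge\frac53+\alpha$ for $\alpha\le\frac43$ --- but it must be said, since the destabilising subbundle $B=\cO(2)^2$ you use simply does not exist for the other splitting type. (For $(2,2,2)$, $(3,3,3)$ and $(2,3,3)$ the splitting type is forced by Lemma \ref{l01}, so no issue arises there.)
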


The paper \cite{ln2} also contains an example for $k=4$ with unexpected behaviour.

\begin{prop}\label{prop07}\cite[Proposition 10.1]{ln2}
$G(\alpha;6,7,4)\ne\emptyset$ if and only if $\frac54<\alpha<2$.
\end{prop}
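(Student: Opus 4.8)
The plan is to treat the two inequalities $\alpha>\frac54$ and $\alpha<2$ separately, handling necessity first and then constructing examples to establish sufficiency. For the upper bound, note that $d=7$, $n=6$, $k=4$, so in the notation of Proposition \ref{prop03} we have $a=2$, $t=5$ (since $7=2\cdot 6-5$). Since $k<n$, Proposition \ref{prop04} applies: we must compute $l$ and $m$ from $ka-t=l(n-k)+m$, i.e. $8-5=3=l\cdot 2+m$ with $0\le m<2$, giving $l=1$, $m=1$. Then Proposition \ref{prop04} yields $\alpha<\frac{d}{n-k}-\frac{mn}{k(n-k)}=\frac72-\frac{6}{8}=\frac72-\frac34=\frac{11}{4}$, which is weaker than $\alpha<2$. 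So the upper bound $\alpha<2$ is not automatic and must be extracted by a more delicate argument: I would look for a destabilizing subsystem for $\alpha\ge 2$. A natural candidate is a subsystem $(E_1,V_1)$ with $(n_1,d_1,k_1)=(1,a+1,k_1)$ or more generally a line subbundle $\cO(3)$ together with all its sections; for a general $(E,V)$ of type $(6,7,4)$ the bundle $E$ is $\cO(2)^4\oplus\cO(1)^2$ by Proposition \ref{prop03}, and one must analyse which sub-line-bundles $\cO(b)$ can carry how many sections of $V$. If some $(E_1,V_1)$ of type, say, $(n_1,d_1,k_1)$ forces $\frac{d_1+\alpha k_1}{n_1}\ge \frac{7+4\alpha}{6}$ for $\alpha\ge 2$, non-existence follows; the task is to show that every $(E,V)$ which survives for small $\alpha$ necessarily contains such a subsystem once $\alpha$ reaches $2$.

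For the lower bound $\alpha>\frac54$, I would again use Proposition \ref{prop04}, which only gives $\alpha>\frac{t}{k}=\frac54$ — so here the bound from Proposition \ref{prop04} is exactly what we want, and necessity of $\alpha>\frac54$ is immediate. Thus the real content of the proposition is: (a) proving non-existence for $\alpha\ge 2$, and (b) constructing $\alpha$-stable coherent systems of type $(6,7,4)$ for every $\alpha$ in the open interval $\left]\frac54,2\right[$. By Theorem \ref{t01}(ii), the set of $\alpha$ for which $G(\alpha;6,7,4)\ne\emptyset$ is an open interval, so it suffices to exhibit a single $\alpha_0\in\left]\frac54,2\right[$ with $G(\alpha_0;6,7,4)\ne\emptyset$ together with the two endpoint non-existence statements; the interval is then forced to be exactly $\left]\frac54,2\right[$. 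For the construction I would try to build $(E,V)$ as an extension using Lemma \ref{lem02}: start with an $\alpha$-stable coherent system $(F,W)$ of smaller type with $\mu_\alpha(F,W)=b$ for a suitable integer $b$, verify the $\Ext^1$-dimension bound \eqref{eq05} using Proposition \ref{prop01} (noting that the $\mathbb H^2$ term vanishes by Lemma \ref{l01} since everything in sight is $\alpha$-semistable), and conclude that the general extension by copies of $(\cO(b),0)$ is $\alpha^+$-stable of the right type. One needs $b$, the type of $(F,W)$, and the number of copies $r$ to be chosen so that the resulting invariants are $(6,7,4)$; a plausible choice is $b=2$ with $(F,W)$ of type $(k',d',4)$ for small $k'$, adding $\cO(2)$'s to fill out the rank and degree.

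The main obstacle will be part (a), the sharp non-existence statement for $\alpha\ge 2$, since the generic destabilization bound of Proposition \ref{prop04} is too weak. This requires understanding the sub-coherent-system structure of a general $(E,V)$ of type $(6,7,4)$ very precisely — in particular, controlling the maximal value of $\frac{k_1}{n_1}$ (and of the $\alpha$-slope) over all subsystems, which by Proposition \ref{prop02} governs stability for large $\alpha$ but here must be pushed down to the threshold $\alpha=2$. Concretely I expect one must show that for a general such $(E,V)$ there is always a subsystem $(E_1,V_1)$ with $\frac{d_1+\alpha k_1}{n_1}\ge\frac{7+4\alpha}{6}$ precisely when $\alpha\ge 2$, which pins down the endpoint; establishing this likely involves a careful dimension count on the space of sub-line-bundles of $\cO(2)^4\oplus\cO(1)^2$ meeting $V$ in a prescribed dimension, combined with the genericity of $V$ in $\Gr(4,H^0(E))$ from Theorem \ref{t01}(i). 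The secondary difficulty is making the extension construction in part (b) land on exactly the right value of $\alpha$ and checking that $\alpha^+$-stability propagates across the whole interval rather than just near one endpoint; the openness in Theorem \ref{t01}(ii) mitigates this, but one still needs the construction to produce something for at least one interior $\alpha$.
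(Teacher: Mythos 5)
First, a point of comparison: the paper itself contains no proof of this statement. Proposition \ref{prop07} sits in the review section and is quoted directly from \cite[Proposition 10.1]{ln2}, so the only "proof" here is the citation; your proposal therefore has to be judged against what such a proof actually requires. You correctly isolate the cheap ingredients: $\alpha>\frac54$ is just $\alpha>\frac tk$ from Proposition \ref{prop04} with $t=5$, $k=4$, and the same proposition only gives the expected upper bound $\frac{11}4$ (Remark \ref{r01}), so the substance is sharp non-existence for $\alpha\ge2$ plus existence below $2$. But both of these hard steps remain declarations of intent, and one concrete error would derail the analysis you outline: since $7=2\cdot 6-5$ gives $t=5$ (as you yourself compute), the generic splitting type is $E\cong\cO(2)\oplus\cO(1)^5$, not $\cO(2)^4\oplus\cO(1)^2$ (the latter has degree $10$). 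The entire non-existence argument consists of analysing subsystems of this particular bundle, so this is not a harmless slip. Moreover, you never identify which subsystem should do the destabilising. What is needed is a subsystem whose $\alpha$-slope crosses the total slope exactly at $\alpha=2$, e.g.\ one of type $(4,4,3)$ (equivalently a quotient coherent system of type $(2,3,1)$), for which $\frac{4+3\alpha}{4}\ge\frac{7+4\alpha}{6}$ precisely when $\alpha\ge2$; the real content is showing that \emph{every} $(E,V)$ of type $(6,7,4)$ admits such a subsystem, and no counting argument for this is offered.

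There is also a logical gap in your reduction. Knowing that $I(6,7,4)$ is an open interval, that $\frac54\notin I$ and $2\notin I$, and that one interior point $\alpha_0$ lies in $I$ only yields $\{\alpha_0\}\subset I\subseteq]\frac54,2[$; it does not force $I=]\frac54,2[$, which could a priori be a proper subinterval. You need $\alpha$-stable systems for $\alpha$ arbitrarily close to both endpoints. Your Lemma \ref{lem02} idea can plausibly handle the lower endpoint: taking $b=2$, $r=1$ and $(F,W)$ of type $(5,5,4)$, one has $\mu_\alpha(F,W)=2$ exactly at $\alpha=\frac54$, and \eqref{eq02}--\eqref{eq03} give $\dim\Ext^1((F,W),(\cO(2),0))\ge2$, so a general extension yields points of $G((\frac54)^+;6,7,4)$ — provided one also verifies that a $\frac54$-stable $(F,W)$ of type $(5,5,4)$ exists, which your sketch does not address. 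Even granting all of this, nothing in the proposal produces stable systems for $\alpha$ near $2$, so the stated equality of intervals would still not follow. In short: the strategy is pointed in a reasonable direction, but the sharp upper bound (the actual novelty of the result) and the full existence range are both missing, and the splitting-type error would have to be corrected before the detailed subsystem analysis could even begin.
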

\begin{rem}\label{r01}\begin{em}The expected range of $\alpha$ here is $\frac54<\alpha<\frac{11}4$.
\end{em}\end{rem}

Finally, we have non-emptiness results for $k=n-1$, $k=n$ and $k=n+1$.

\begin{prop}\label{prop08}\cite[Propositions 6.1, 6.3 and 6.4]{ln1}
\begin{itemize}
\item $G(\alpha;n,d,n-1)\ne\emptyset$ for some $\alpha$ if and only if $d\ge n$. Moreover, in this case, the upper bound for $\alpha$ is precisely $d$.
\item $G(\alpha;n,d,n)\ne\emptyset$ for some $\alpha$ if and only if $d>n$. Moreover, in this case, there is no upper bound on $\alpha$. 
\item $G(\alpha;n,d,n+1)\ne\emptyset$ for some $\alpha$ if and only if $d\ge n$. Moreover, if $d\ge n$ and we write $d=an-t$ with $0\le t<n$, then $G(\alpha;n,d,n+1)$ is always non-empty if $\alpha>t$.
\end{itemize}
\end{prop}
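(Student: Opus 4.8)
All three parts combine a necessity argument from the dimension bound of Theorem~\ref{t01} with explicit constructions of the shapes predicted by Proposition~\ref{prop03}. A direct computation gives $\beta(n,d,n-1)=(n-1)(d-n)$, $\beta(n,d,n)=nd-n^2+1$ and $\beta(n,d,n+1)=(n+1)(d-n)$, so in each case $G(\alpha;n,d,k)\ne\emptyset$ forces $d\ge n$. When $k=n-1$ one has $n-k=1$, hence $m=0$ in Proposition~\ref{prop04}, which then gives $\alpha<\frac{d}{n-k}-\frac{mn}{k(n-k)}=d$; so the upper bound for $\alpha$ is at most $d$, and it will remain to exhibit $\alpha$-stable systems for $\alpha$ arbitrarily close to $d$. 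When $k=n$ there is no such automatic bound, but $\beta\ge0$ only yields $d\ge n$, so the case $d=n$ must be excluded separately.

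To exclude $d=n$, $k=n$ (with $n\ge2$): by Lemma~\ref{l01} an $\alpha$-stable $(E,V)$ of this type has $E\cong\bigoplus\cO(a_i)$ with all $a_i\ge1$ and $\sum a_i=n$, forcing $E\cong\cO(1)^n$, and I would show $(\cO(1)^n,V)$ is never $\alpha$-stable. Fix a basis $e_1,e_2$ of $H^0(\cO(1))$, identify $H^0(\cO(1)^n)$ with $\CC^2\otimes\CC^n$, and let $\pi_i\colon V\to\CC^n$ be the projection to the $e_i$-component. If both $\pi_i$ are isomorphisms then $V=\{e_1\otimes w+e_2\otimes Aw:w\in\CC^n\}$ with $A=\pi_2\pi_1^{-1}$, and every polynomial $p(A)$, acting as $\mathrm{id}_{\cO(1)}\otimes p(A)$, is an endomorphism of $(\cO(1)^n,V)$, so $\End(\cO(1)^n,V)\ne\CC$; if, say, $\pi_1$ is not an isomorphism, a nonzero $e_2\otimes w\in V$ spans a sub-line-bundle $\cO(1)\hookrightarrow\cO(1)^n$ giving a subsystem $(\cO(1),V')$ with $\mu_\alpha(\cO(1),V')=1+\alpha=\mu_\alpha(\cO(1)^n,V)$. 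Either way $(E,V)$ is not $\alpha$-stable, so $G(\alpha;n,n,n)=\emptyset$; with $\beta\ge0$ this gives $d>n$.

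For existence I would use the sequences of Proposition~\ref{prop03}. \emph{Case $k=n$, $d>n$:} with $p_1,\dots,p_d$ distinct, form $0\to\cO^n\to E\to\bigoplus_i\cO_{p_i}\to0$ with class $(\xi_1,\dots,\xi_d)\in\bigoplus_i\Ext^1(\cO_{p_i},\cO^n)=\bigoplus_i\CC^n$ having the $\xi_i$ in general linear position (so all $\xi_i\ne0$ and $E$ is locally free of degree $d$), and put $V:=H^0(\cO^n)$, of dimension $n$. For a proper saturated $E'\subseteq E$ of rank $r$, either $\rk(\cO^n\cap E')<r$, so $\dim V'=h^0(\cO^n\cap E')<r$, or $\cO^n\cap E'=W\otimes\cO$ with $\dim W=r$, in which case $\deg E'$ (the torsion length of $E/(W\otimes\cO)$) equals $\#\{i:\xi_i\in W\}\le\dim W=r<\frac{rd}{n}$; the latter gives $\mu_\alpha(E',V')<\mu_\alpha(E,V)$ for every $\alpha$, the former for all large $\alpha$, so $G(\alpha;n,d,n)\ne\emptyset$ for arbitrarily large $\alpha$. \emph{Case $k=n-1$:} form $0\to\cO^{n-1}\to E\to\cO(d)\to0$ with class $\eta\in\Ext^1(\cO(d),\cO^{n-1})\cong\Hom(\CC^{n-1},H^1(\cO(-d)))$ of maximal rank $n-1$ (possible precisely because $d\ge n$), and $V:=H^0(\cO^{n-1})$. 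For a proper subbundle $E'$, put $F'=\cO^{n-1}\cap E'$; the subsystems of largest $\alpha$-slope are those with $F'\cong\cO^{r-1}$ and $E'/F'\cong\cO(d)$, and such an $E'$ exists only if the image of $\eta$ in $\Ext^1(\cO(d),\cO^{n-1}/F')$ vanishes, which by maximality of $\rk\eta$ it does not; a short estimate on the remaining subsystems then shows $(E,V)$ is $\alpha$-stable for $\alpha$ in an interval with supremum $d$, so the upper bound is precisely $d$. \emph{Case $k=n+1$:} take $E=\Coker(\cO(-d)\hookrightarrow\cO^{n+1})$ with the corresponding $n+1$ sections of $\cO(d)$ in general position, so $E$ is of generic splitting type $\cO(a)^{n-t}\oplus\cO(a-1)^t$ and $V:=H^0(\cO^{n+1})$ has dimension $n+1$; one checks that the subsystems coming from subbundles $\cO(a)^s\oplus\cO(a-1)^{s'}\subseteq E$ all satisfy $\mu_\alpha(E',V')<\mu_\alpha(E,V)$ once $\alpha$ exceeds an explicit bound, namely $\frac{n-t}{(a-1)n-1}$ when $t\ge1$ (and $0$ when $t=0$), which in all cases is $\le t$; thus $\alpha>t$ suffices.

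The crux, and the main obstacle, is the stability verification in these constructions: one must show that the chosen general-position data rule out all destabilising subsystems in the required range of $\alpha$. For $k=n$ and $k=n-1$ the relevant hypothesis is a single, subspace-independent condition (general linear position of the $\xi_i$, maximality of $\rk\eta$) and the verification is short. The delicate point is $k=n+1$, where the threshold must come out exactly as $\alpha>t$: the factor $t$ enters only through the unbalanced summands $\cO(a-1)^t$ of $E$, and one must check, with some care about genericity of $V$ relative to the positive-dimensional family of subbundles of $E$, that no subbundle meeting those summands raises the lower bound above $t$. I expect this bookkeeping, rather than any conceptual difficulty, to be the most laborious part of the proof.
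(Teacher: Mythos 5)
The paper does not actually prove Proposition \ref{prop08}: it is quoted from \cite[Propositions 6.1, 6.3 and 6.4]{ln1}, so your proposal has to be judged on its own terms; your route through the presentations of Proposition \ref{prop03} is certainly the natural one. The necessity statements ($\beta\ge0$, plus Proposition \ref{prop04} with $m=0$ for $k=n-1$) and the cases $k=n-1$, $k=n$ are essentially correct, modulo two small repairs. In the exclusion of $(n,n,n)$: if $A=\pi_2\pi_1^{-1}$ is a scalar matrix, every $p(A)$ is scalar and your endomorphism argument concludes nothing; but then every $\mathrm{id}_{\cO(1)}\otimes B$ preserves $V$ (equivalently, $V$ contains a decomposable section $s\otimes w$, giving a subsystem of type $(1,1,1)$ with the same $\alpha$-slope), so the conclusion stands. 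In the $k=n$ construction, $\rk(\cO^n\cap E')=r$ does not force $\cO^n\cap E'=W\otimes\cO$; the case you actually need is $\dim V'=r$, which does force $E'\supseteq W\otimes\cO$ with $\dim W=r$, and your degree count then goes through. For $k=n-1$ the mechanism is right: maximal rank of $\eta$ (available exactly when $d\ge n$) excludes precisely the subsystems with $F'=W\otimes\cO$ and $E'/F'\cong\cO(d)$, and the deferred estimate on the remaining subsystems is genuinely routine ($k_1=n_1$ forces $d_1\le0$, and $k_1\le n_1-1$ forces $d_1\le d(n_1-k_1)-1$ once those special subsystems are excluded, which yields stability on an interval with supremum $d$).

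The genuine gap is the third bullet. You assert that the generic syzygy system $0\to\cO(-d)\to V\otimes\cO\to E\to0$ is $\alpha$-stable for all $\alpha>\frac{n-t}{(a-1)n-1}$, but that number is only the threshold produced by the subsystems $E_1=\cO(a)^{n-t}\oplus\cO(a-1)^j$ when $V\cap H^0(E_1)$ has its expected dimension $n+1-a(t-j)$. Nothing in your argument (i) rules out subsystems with $k_1\ge n_1+1$, i.e.\ $\frac{k_1}{n_1}>\frac{n+1}{n}$, which by Proposition \ref{prop02} would destroy semistability for all large $\alpha$ and hence the claim ``non-empty for all $\alpha>t$''; nor (ii) bounds the thresholds coming from subsystems whose intersection with $V$ exceeds the expected dimension. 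Both require a parameter count of $V$ against the positive-dimensional families of subbundles of $E$: for instance, the degree-$(a-1)$ line subbundles move in a family of dimension $2n-t-1$, and a single such subbundle carrying two sections of $V$ (for $a\ge2$) already caps the stability range at $\frac{n-t}{n-1}$, so pointwise genericity of $V$ against one subbundle at a time is not enough. This is exactly the kind of count the paper itself has to labour over in Lemma \ref{lem2} and Propositions \ref{prop2}--\ref{prop4}, and it is the real content of this bullet rather than mere bookkeeping; until it is carried out, the $k=n+1$ case is not proved. (A minor additional point: in your construction $V$ is the image of $H^0(\cO^{n+1})$, not a freely chosen point of $\Gr(n+1,H^0(E))$; the two notions of genericity do agree, since a generic $V$ generates $E$ with kernel $\cO(-d)$, but this should be said, or the count should be run inside your family.)
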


\section{Statement of results and conjectures}\label{state}
Our first theorem gives a complete answer to the existence problem for $\alpha$-stable coherent systems when $d$ is a multiple of $n$ and $k>n$.
\begin{theorem}\label{t1}
Let $n,a,k$ be positive integers with $n\ge2$, $k>n$ and let $d=an$. Then
\begin{itemize}
\item for all $\alpha>0$, the moduli space $G(\alpha;n,d,k)$ is non-empty if and only if $k((a+1)n-k)\ge n^2-1$;
\item $G(\alpha;n,d,k)$ is smooth and irreducible of dimension $k((a+1)n-k)- n^2+1$ whenever it is non-empty;
\item if  $k((a+1)n-k)\ge n^2-1$, 
$$ U^s:=\{(E,V)|(E,V)\mbox{ is of type }(n,d,k) \mbox{ and  is }\alpha\mbox{-stable for all }\alpha>0\}$$
is a smooth and irreducible variety of dimension $k(n(a+1)-k)- n^2+1$.
\end{itemize}
\end{theorem}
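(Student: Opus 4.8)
The plan is to reduce everything to the case $\alpha$ large via Proposition \ref{prop02} and then build the relevant coherent systems as general quotients of trivial bundles. Since $k>n$ and $d=an$, Proposition \ref{prop02} tells us that $\alpha$-stability is independent of $\alpha$ for $\alpha>d(n-1)$, and that a coherent system of type $(n,an,k)$ is $\alpha$-stable for all large $\alpha$ precisely when every proper subsystem $(E_1,V_1)$ satisfies $\frac{k_1}{n_1}<\frac{k}{n}$. The first step is therefore to show that for a coherent system of this type, the condition ``$\frac{k_1}{n_1}<\frac{k}{n}$ for all proper subsystems'' is in fact equivalent to ``$\alpha$-stable for \emph{all} $\alpha>0$.'' One direction is immediate from the definition of $\mu_\alpha$; for the other, one uses that $d/n=a$ is an integer together with the structure of subsystems, so that if $\frac{k_1}{n_1}<\frac{k}{n}$ then $\frac{d_1+\alpha k_1}{n_1}<\frac{d+\alpha k}{n}$ already forces $d_1\le an_1$ (because any sub-bundle $E_1=\bigoplus\cO(a_i')$ of $E$ of generic splitting type $\cO(a)^{n-t}$ with $d=an$ has $a_i'\le a$), giving strict inequality for every $\alpha>0$. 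Thus $U^s$ coincides with the stratum that is $\alpha$-stable for large $\alpha$, so $U^s=G(\alpha;n,d,k)$ for $\alpha\gg0$ as a set.

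The second step is to identify $G(\alpha;n,an,k)$ for $\alpha\gg0$ explicitly. By Proposition \ref{prop03}, for a general $(E,V)$ with $k>n$ there is an exact sequence $0\to H\to V\otimes\cO\to E\to 0$ with $E\cong\cO(a)^{n-t}\oplus\cO(a-1)^t$; since here $d=an$ we get $t=0$, so $E\cong\cO(a)^n$. Conversely, a coherent system of the form $(\cO(a)^n,V)$ with $V\subset H^0(\cO(a)^n)$ is determined by the surjection $V\otimes\cO\twoheadrightarrow\cO(a)^n$, equivalently by a point of the Grassmannian $\Gr(k,H^0(\cO(a)^n))$ with $h^0(\cO(a)^n)=n(a+1)$, subject to the open condition that the evaluation map is surjective and that $V$ generates. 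So the natural parameter space is an open subset of $\Gr(k,n(a+1))$ modulo $\Aut(\cO(a)^n)=GL(n)$ acting on the target; the dimension count $k(n(a+1)-k)-(n^2-1)$ is exactly $\dim\Gr(k,n(a+1))-\dim PGL(n)$. The third step is to pin down the open condition: $(\cO(a)^n,V)$ is $\alpha$-stable for all $\alpha>0$ iff no proper subsystem destabilises, and one checks that the failure of this condition (and of surjectivity of evaluation) is a proper closed subvariety of $\Gr(k,n(a+1))$, nonempty-complement precisely when $k(n(a+1)-k)\ge n^2-1$; this last numerical equivalence is where Theorem \ref{t02} (the $k<n$ analogue) and the necessary-conditions results of section \ref{nec} will be invoked, together with Theorem \ref{t01} for smoothness and irreducibility of the nonempty moduli space.

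Putting these together: when $k(n(a+1)-k)\ge n^2-1$, the locus $U^s$ is the image of a nonempty $GL(n)$-invariant open subset of the smooth irreducible Grassmannian $\Gr(k,n(a+1))$ under a free quotient, hence smooth and irreducible of the stated dimension, and it equals $G(\alpha;n,d,k)$ for every $\alpha>0$ by the first step, giving non-emptiness for all $\alpha>0$ and the dimension formula from Theorem \ref{t01}; when $k(n(a+1)-k)<n^2-1$, no such $V$ exists and $G(\alpha;n,d,k)=\emptyset$ for all $\alpha$. The main obstacle I expect is the third step: showing that the ``bad locus'' in $\Gr(k,n(a+1))$ (where some subsystem $(E_1,V_1)$ has $\frac{k_1}{n_1}\ge\frac{k}{n}$, or the evaluation map drops rank) is a \emph{proper} closed subset exactly under the bound $k((a+1)n-k)\ge n^2-1$ — this requires a careful dimension estimate of the incidence variety of destabilising sub-bundles $\cO(a_i')^{\oplus}\hookrightarrow\cO(a)^n$ together with their spaces of sections, and it is essentially the content that makes the numerical condition sharp. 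The parts that reduce to $\alpha$ large and to an explicit Grassmannian description are routine given Propositions \ref{prop02} and \ref{prop03} and Theorem \ref{t01}.
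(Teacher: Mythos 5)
Your overall strategy is the same as the paper's: take $E=\cO(a)^n$, choose $V$ general in $\Gr(k,H^0(E))$, use Proposition \ref{prop02} together with the semistability of $\cO(a)^n$ to upgrade stability for large $\alpha$ to $\alpha$-stability for every $\alpha>0$, and quote Theorem \ref{t01} (equivalently Proposition \ref{prop1}) for the necessity of $k((a+1)n-k)\ge n^2-1$ and for smoothness, irreducibility and the dimension. The problem is that the step you yourself flag as ``the main obstacle'' -- showing that the locus of $V$ admitting a subsystem $(E_1,V_1)$ with $\frac{k_1}{n_1}\ge\frac{k}{n}$ is a proper subvariety of $\Gr(k,(a+1)n)$ whenever $k((a+1)n-k)\ge n^2-1$ -- is exactly the new technical content of the paper (Propositions \ref{prop2} and \ref{prop3}, via Lemmas \ref{lem1}--\ref{lem3}), and you do not carry it out. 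The paper counts, for each type $(n_1,d_1,k_1)$, the parameters for the inclusion $E_1\subset E$, for $V_1\subset H^0(E_1)$ and for $V\supset V_1$, reduces the needed estimate \eqref{eq43} to a quadratic inequality in $k$ checked at the endpoints of the allowed range, and, in the range $k>an$ where the numerical bound is nearly sharp, has to treat the borderline cases $a+1=\frac{k}{n}+\frac{n}{k}$ and $a+1=\frac{k}{n}+\frac{n}{k}-\frac{1}{nk}$ by an integrality/coprimality argument (replacing \eqref{eq41} by $k_1\ge\frac{n_1k+1}{n}$) and then an induction on $n_1$ (Proposition \ref{prop3}). None of this appears in your outline, and your suggestion that the sufficiency of the bound would come from Theorem \ref{t02} does not work: that result concerns $0<k<n$ and $\alpha=0^+$ and plays no role here; the bound enters instead as a hypothesis in the dimension count itself.

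Two further points. The assertion that $U^s$ equals $G(\alpha;n,d,k)$ for $\alpha\gg0$ as a set is false in general: $G(\alpha\gg0)$ may contain systems whose bundle has a direct summand $\cO(b)$ with $b>a$, and these are destabilized for small $\alpha$ by that summand; what is true, and is all you need, is that $U^s$ is an open subset of $G(\alpha\gg0)$ and that any $(\cO(a)^n,V)$ all of whose proper subsystems satisfy $\frac{k_1}{n_1}<\frac{k}{n}$ lies in $U^s$, because every sub-bundle of $\cO(a)^n$ has slope at most $a$. Finally, the Grassmannian-modulo-$\Aut(\cO(a)^n)$ picture is a fine heuristic for the number $k((a+1)n-k)-n^2+1$, but it is not needed and would require extra care (freeness of the action, existence of the quotient): once non-emptiness of $U^s$ is established, openness in the smooth irreducible moduli space of Theorem \ref{t01} already yields all the qualitative conclusions.
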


When $d$ is not a multiple of $n$, we have the following theorem.

\begin{theorem}\label{t2}
Let $n,a,k$ be positive integers with $n\ge2$, $k\ge n$ and let $d=an-t$ with $1\le t\le n-1$. Suppose that $G(\alpha;n,d,k)$ is non-empty for some $\alpha>0$. Then
\begin{itemize}
\item $k((a+1)n-t-k)\ge n^2-1$;
\item $G(\alpha;n,d,k)$ is smooth and irreducible of dimension $k((a+1)n-t-k)- n^2+1$ whenever it is non-empty;
\item $k<an$;
\item $\alpha>\alpha_c:=\max\{\frac{t}k,\frac{n-t}{an-k}\}$.
\end{itemize}
Moreover, if either $k\le an-t$ or $k\le an-1$ and $a\ge t$, then $G(\alpha;n,d,k)$ is non-empty for sufficiently large $\alpha$.
\end{theorem}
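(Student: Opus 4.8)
The plan is to read off the first two bullets from Theorem \ref{t01}, to extract the numerical constraints $k<an$ and $\alpha>\alpha_c$ from the structure of a general $\alpha$-stable system together with one carefully chosen quotient, and to prove the existence statement by a genericity argument based on Proposition \ref{prop02}. For the first two bullets, note that on $\PP^1$ formula \eqref{eq01} becomes $\beta(n,d,k)=k((a+1)n-t-k)-n^2+1$, so Theorem \ref{t01} immediately gives both that non-emptiness forces $\beta(n,d,k)\ge0$, i.e. $k((a+1)n-t-k)\ge n^2-1$, and that $G(\alpha;n,d,k)$ is smooth and irreducible of this dimension whenever non-empty. For the last two bullets I would pass to a general $(E,V)\in G(\alpha;n,d,k)$: by Theorem \ref{t01}(i) (and Proposition \ref{prop03} when $k>n$), $E\cong\cO(a)^{n-t}\oplus\cO(a-1)^t$ with $t\ge1$ and $V$ is general in $\Gr(k,H^0(E))$, with $H^0(E)=H^0(\cO(a)^{n-t})\oplus H^0(\cO(a-1)^t)$ of dimension $d+n$. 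Projecting onto the second summand produces a quotient coherent system $(E,V)\twoheadrightarrow(\cO(a-1)^t,V_2)$ with $\dim V_2=\min(k,at)$, since $V$ is general. Clearing denominators in $\mu_\alpha(E,V)<\mu_\alpha(\cO(a-1)^t,V_2)$ reduces it to $\alpha(kt-n\dim V_2)<-t(n-t)$: when $k\le at$ this says $\alpha>\frac tk$ (and $k<an$ holds since $at<an$), while when $k>at$ it becomes $\alpha\,t(an-k)>t(n-t)$, which is impossible unless $an-k>0$, forcing $k<an$ and then $\alpha>\frac{n-t}{an-k}$. Since $\frac tk\ge\frac{n-t}{an-k}$ exactly when $k\le at$, and $\alpha>\frac tk$ always by Proposition \ref{prop04}, this yields $\alpha>\alpha_c$.

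For the final statement, by Proposition \ref{prop02} it suffices to construct a coherent system $(E,V)$ of type $(n,an-t,k)$ in which every proper subsystem $(E_1,V_1)$ satisfies $\frac{\dim V_1}{\rk E_1}<\frac kn$; any such $(E,V)$ is $\alpha$-stable for all $\alpha>d(n-1)$. Both hypotheses force $an>n$, hence $a\ge2$, so $E:=\cO(a)^{n-t}\oplus\cO(a-1)^t$ has all summands positive; moreover $k<an<h^0(E)$, so I would take $V$ to be a general $k$-dimensional subspace of $H^0(E)$. Destabilising subsystems arise from sub-bundles $F\subseteq E$ (replace $E_1$ by its saturation and $V_1$ by $V\cap H^0(F)$): such an $F$ has at most $n-t$ summands of degree $a$ and all summands of degree $\le a$, so $h^0(F)\le a\,\rk F+\min(\rk F,n-t)$, with equality for a split sub-bundle $\cO(a)^{p}\oplus\cO(a-1)^{q}$; and for general $V$ one has $\dim(V\cap H^0(F))\le\max(0,k+h^0(F)-h^0(E))$. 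Substituting these values and writing $p'=n-t-p$, $q'=t-q$, the inequalities to be verified reduce to the elementary statement that, for all $0\le p'\le n-t$ and $0\le q'\le t$ not both zero with $p'+q'\le n-1$, either $k\le(a+1)p'+aq'$ or $k<na+\frac{np'}{p'+q'}$, which holds as soon as $k<an$.

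The main obstacle lies in the phrase ``for general $V$'' above: the sub-bundles $F\subseteq E$ of a fixed bundle of generic splitting type move in positive-dimensional families, so one must show that a \emph{single} general $V$ meets $H^0(F)$ in the expected dimension for \emph{all} of them simultaneously. This is an incidence estimate: for each $n_1=\rk F$ one compares the dimension of the family of rank-$n_1$ sub-bundles of $E$ (equivalently, of surjections from $E$) with the codimension in $\Gr(k,H^0(E))$ of the Schubert locus $\{V:\dim(V\cap H^0(F))\ge\lceil n_1k/n\rceil\}$. When $k$ is held away from $an$ as in the two hypotheses, the codimension wins and the argument closes; as $k$ approaches $an$ this Schubert locus drops to codimension one while the sub-bundle families remain positive-dimensional, the construction fails, and a genuinely different argument would be needed — which is presumably why the theorem does not reach the full necessary range $k<an$. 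By contrast, the necessary-conditions half is a short consequence of Propositions \ref{prop03} and \ref{prop04} once the quotient by the summand $\cO(a-1)^t$ has been singled out.
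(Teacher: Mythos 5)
Your derivation of the four necessary conditions is sound and is essentially the paper's own argument (Proposition \ref{prop1}) in dual form: where you test $\alpha$-stability against the quotient system $(\cO(a-1)^t,V_2)$, the paper tests it against the kernel subsystem $(\cO(a)^{n-t},V\cap H^0(\cO(a)^{n-t}))$; since ranks, degrees and section dimensions add up along the exact sequence, the two computations are equivalent, and your case split $k\le at$ versus $k>at$, combined with Proposition \ref{prop04}, yields $k<an$ and $\alpha>\alpha_c$ exactly as in the paper.

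The existence half, however, has a genuine gap, which you yourself flag but do not close. Your strategy coincides with the paper's: by Proposition \ref{prop02} it suffices to exhibit $(E,V)$ with $E\cong\cO(a)^{n-t}\oplus\cO(a-1)^t$ admitting no subsystem $(E_1,V_1)$ with $k_1/n_1\ge k/n$, and one hopes that a general $V$ works. But, as you note, the fixed-$F$ bound $\dim(V\cap H^0(F))\le\max(0,k+h^0(F)-h^0(E))$ is insufficient because the subsheaves $F$ move in positive-dimensional families; the entire content of the statement is the incidence estimate that the dimension of the family of pairs (an inclusion $E_1\subset E$ with $k_1/n_1\ge k/n$, a subspace $V_1\subset H^0(E_1)$) together with the choice of $V\supset V_1$ is strictly smaller than $\dim\Gr(k,h^0(E))$. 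In the paper this is inequality \eqref{eq43}, established through Lemmas \ref{lem1} and \ref{lem2} (giving Proposition \ref{prop2}, the range $n<k\le an-t$) and through Lemma \ref{lem4} and Proposition \ref{prop4} via conditions \eqref{eq49} and \eqref{eq50} plus a residual case $a=t$, $n_1=1$ (the range $n<k\le an-1$ with $a\ge t$) --- a delicate multi-case computation in which the two hypotheses, and in particular $a\ge t$, enter essentially. You assert only that ``the codimension wins'' under these hypotheses; nothing is computed, and the assertion cannot be taken on faith, since Theorem \ref{t3} shows that the conclusion genuinely fails for some $k<an$ just outside these ranges (e.g. $(n,d,k)=(n,2n-3,2n-1)$, $n\ge4$), so the estimate is exactly where the hypotheses do their work. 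A smaller omission: the case $k=n$ needs separate treatment (the dimension count is only non-strict there, cf. Remark \ref{r1}; the paper instead invokes Proposition \ref{prop08}), whereas your sketch does not distinguish it.
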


This theorem answers the existence problem for large $\alpha$ if $t=1$ or $t=2$ but not if $t\ge3$. The next theorem provides some non-existence results and shows that the necessary conditions of Theorem \ref{t2} are not sufficient for non-emptiness. In particular, if $n\ge4$, $G(\alpha;n,2n-3,2n-1)=\emptyset$ for all $\alpha>0$.
Note, however, that the theorem is compatible with Theorem \ref{t2}.

\begin{theorem}\label{t3}
Suppose that $a\ge2$, $1\le t\le n-1$ and one of the following holds:
\begin{itemize}
\item[(a)] $k\ge at$ and $(a-1)t>a(an-k)+(a-2)n$;
\item[(b)] $k\le at$ and $t> (a+1)k-n$.
\end{itemize} 
Then $G(\alpha;n,an-t,k)=\emptyset$ for all $\alpha>0$.
\end{theorem}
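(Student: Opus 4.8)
The plan is to assume, for contradiction, that $G(\alpha_0;n,an-t,k)\neq\emptyset$ for some $\alpha_0>0$ and to exhibit a proper subsystem of the general $(E,V)$ in this moduli space that violates $\alpha_0$-stability. Write $d=an-t$ and $N=(a+1)n-t$. By Theorem~\ref{t01} the space $G(\alpha_0;n,d,k)$ is irreducible, and for the general $(E,V)$ one has $E\cong\cO(a)^{n-t}\oplus\cO(a-1)^t$ (so $h^0(E)=N$) with $V$ generic in $\Gr(k,H^0(E))$; by Proposition~\ref{prop04} we may also assume $\alpha_0>t/k$. The point is that hypotheses (a) and (b) are precisely what is needed for the destabilising subsystems below to exist.

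\emph{Case (a).} First, (a) forces $k>d$: if $k\le an-t$ then $an-k\ge t$, so $a(an-k)+(a-2)n\ge at+(a-2)n\ge(a-1)t$, contradicting the second inequality in (a). Since $d\ge n+1$, this gives $k>n$, so Theorem~\ref{t2} applies and yields $k<an$, i.e.\ $an-k\ge1$. I would then use two subsystems of the general $(E,V)$, matched at $\alpha=\frac{n-t}{an-k}$. \emph{(i)} Take $E_1=\cO(a)^{n-t}\subset E$ and $V_1=V\cap H^0(E_1)$: as $H^0(E_1)$ has codimension $at$ in $H^0(E)$ and $V$ is generic, $\dim V_1=k-at\ (\ge0)$, and a direct slope computation gives $\mu_{\alpha_0}(E_1,V_1)\ge\mu_{\alpha_0}(E,V)$ precisely when $\alpha_0\le\frac{n-t}{an-k}$. \emph{(ii)} Take a line sub-bundle $L\cong\cO(a-1)$ of $E$ with $H^0(L)\subseteq V$; then $(L,H^0(L))$ has type $(1,a-1,a)$ and $\mu_{\alpha_0}(L,H^0(L))=(a-1)+a\alpha_0\ge\mu_{\alpha_0}(E,V)$ precisely when $\alpha_0\ge\frac{n-t}{an-k}$. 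Together (i) and (ii) exclude every $\alpha_0>0$ (when $k=at$, $\frac{n-t}{an-k}=\frac1a=\frac tk$, so (i) with $V_1=0$ still does its job, or one may invoke Proposition~\ref{prop04} for $\alpha_0\le\frac tk$).

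The crux is the existence of the sub-bundle $L$ in (ii) for generic $V$. The homomorphisms $\cO(a-1)\to E$ form the vector space $\Hom(\cO(a-1),E)$ of dimension $2(n-t)+t=2n-t$; the induced linear map $H^0(\cO(a-1))\to H^0(E)$ depends linearly on the homomorphism, so the requirement that its image lie in $V$ cuts out a linear subspace $W\subseteq\Hom(\cO(a-1),E)$ defined by at most $a(N-k)$ equations. Now $(a-1)t>a(an-k)+(a-2)n$ rewrites as $ak+(a-1)t>(a^2+a-2)n$, equivalently $2n-t-1\ge a(N-k)$, so $\dim W\ge1$. One then has to show that $W$ contains a homomorphism which is a sub-bundle inclusion with image of type $\cO(a-1)$ rather than $\cO(a)$; the homomorphisms failing this form a closed cone of dimension at most $n-t+1$ in $\Hom(\cO(a-1),E)$. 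When $k<a(t+1)$ this is immediate, since a nonzero such homomorphism would force an $a$-dimensional subspace into $V\cap H^0(\cO(a)^{n-t})$, which has dimension $k-at<a$; in general one has to run a finer parameter count using the genericity of $V$, and \emph{this is the step I expect to be the main obstacle}.

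\emph{Case (b).} Here $k<n$: $k=n$ would give $t>(a+1)n-n=an$, impossible, and $k>n$ would give $(a+1)k>(a+1)n\ge3n>2n>n+t$, contradicting $t>(a+1)k-n$. So Proposition~\ref{prop03} provides $0\to\cO^k\to E\to G\to0$ with $V=H^0(\cO^k)$ and $G\cong\cO(a+l+1)^m\oplus\cO(a+l)^{n-k-m}$, where $ak-t=l(n-k)+m$, $0\le m<n-k$. Hypothesis (b) gives $ak-t<n-k$, hence $l\in\{-1,0\}$. If $ak=t$ then $l=m=0$, $G=\cO(a)^{n-k}$, and $\frac{d}{n-k}=a=\frac tk$, so emptiness follows from Proposition~\ref{prop04} together with $G(\alpha)=\emptyset$ for $\alpha\ge\frac{d}{n-k}$ (\cite[Lemma 4.2]{bgmn}). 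Otherwise $m\ge1$; let $E_1\subseteq E$ be the preimage of the maximal-slope sub-bundle $\cO(a+l+1)^m\subseteq G$, giving a proper subsystem $(E_1,V)$ of rank $k+m$ and degree $(a+l+1)m$. A slope computation reduces $\mu_{\alpha_0}(E_1,V)\ge\mu_{\alpha_0}(E,V)$, since $(a+1)k-n-t<0$, to $\alpha_0\ge\frac tk$ when $ak>t$ -- which with Proposition~\ref{prop04} settles this subcase -- and to $(n-t)(t-ak)\ge\alpha_0k(ak-t)$ when $ak<t$, an inequality with positive left side and non-positive right side, so $(E,V)$ is never $\alpha_0$-stable. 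In every case $G(\alpha;n,an-t,k)=\emptyset$ for all $\alpha>0$, as required. I expect all the slope computations above to be routine; the only genuinely delicate point is the construction of the line sub-bundle $L$ in Case (a)(ii).
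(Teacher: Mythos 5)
Your case (b) is correct, and it takes a genuinely different route from the paper: the paper proves (b) by an Ext-dimension count on the extension $0\to(\cO(a)^{n-t},0)\to(E,V)\to(\cO(a-1)^t,V')\to0$ (Proposition \ref{prop9}), whereas you note that (b) forces $k<n$ and then combine Proposition \ref{prop03} with explicit destabilising subsystems and Proposition \ref{prop04}. Your slope computations check out; in fact your observation that (b) is exactly $ak-t<n-k$, i.e.\ $l\le0$, shows that the interval of Proposition \ref{prop04} (whose length is $\frac{nl}{k}$) is already empty, so (b) follows from that proposition alone.

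Case (a), however, has a genuine gap, and it is precisely the step you flag: the existence, for $V$ generic, of a line subbundle $L\cong\cO(a-1)\subset E$ with $H^0(L)\subseteq V$. Your count only shows that the linear space of maps $\phi\in\Hom(\cO(a-1),E)$ with $\image H^0(\phi)\subseteq V$ is nonzero. A nonzero $\phi$ fails to give the required $L$ exactly when its component in $\Hom(\cO(a-1),\cO(a-1)^t)$ vanishes and its $\cO(a)^{n-t}$-component is a decomposable tensor $\ell\otimes v$, in which case the saturation of the image is $\cO(a)$ and the sections forced into $V$ are $\ell\cdot H^0(\cO(a-1))\otimes v$. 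You rule this out when $k-at<a$, but the range $k\ge at+a$ does occur under (a) (e.g.\ $(n,t,a,k)=(6,3,2,11)$), and there the locus of bad maps contains linear subspaces of dimension up to $n-t$ (fix $\ell$, vary $v$), so the bound $\dim W\ge 2n-t-a((a+1)n-t-k)\ge1$ does not preclude your whole space from lying inside it; the ``finer parameter count'' you defer would need a genuine transversality argument over the Grassmannian of $V$'s. The paper's Proposition \ref{prop5} obtains exactly the object you need while avoiding this issue: genericity gives the extension \eqref{eq53}; $\alpha$-stability kills the Hom from the quotient system to the subsystem and \eqref{eq04} kills $\Ext^2$, so \eqref{eq02}--\eqref{eq03} give $\dim\Ext^1=(a-2)(n-t)+a(an-k)<t$ under (a); hence the $t$ extension classes are dependent, a copy of $(\cO(a-1),H^0(\cO(a-1)))$ splits off as a subsystem (this is your $L$ with all its sections in $V$), and it destabilises for $\alpha\ge\frac{n-t}{an-k}$, while $\cO(a)^{n-t}$ with its induced sections destabilises for smaller $\alpha$, just as in your scheme. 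The missing idea, then, is to exploit stability itself (Hom vanishing) together with the formula of Proposition \ref{prop01}, rather than a raw parameter count on $\Hom(\cO(a-1),E)$.
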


Our final major theorem provides cases in which the lower bound for $\alpha$ in Theorem \ref{t2} is sharp.
Note that the hypotheses in Theorem \ref{t3}(b) can never occur if $k\ge n$. In fact, Theorem \ref{t4}(b) is a generalisation of Proposition \ref{prop06}. 

\begin{theorem}\label{t4}
Suppose that $a\ge2$, $1\le t\le n-1$ and one of the following holds:
\begin{itemize}
\item[(a)] $at\le k<an$, $(a-1)t\le a(an-k)+(a-2)n$ and $G(\alpha_c;n-t,a(n-t),k-at)\ne\emptyset$;
\item[(b)] $k\le at$, $t\le(a+1)k-n$ and $G(\alpha_c;t,(a-1)t,k)\ne\emptyset$.
\end{itemize}
Then $G(\alpha_c^+;n,an-t,k)\ne\emptyset$.
\end{theorem}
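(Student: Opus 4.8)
The plan is to construct $\alpha_c^+$-stable coherent systems of type $(n,an-t,k)$ by an extension argument, starting from a smaller coherent system that exists by hypothesis at the critical value $\alpha_c$ and building up with direct sums of line bundles. The two cases (a) and (b) should be handled separately, but the underlying mechanism is the same: the value $\alpha_c$ is precisely the $\alpha$-slope at which a natural subsystem becomes destabilizing, so one wants to realize the given coherent system as a ``balanced'' extension whose only potentially destabilizing subsystems all have $\alpha$-slope equal to $\mu_{\alpha_c}$, and then invoke Lemma \ref{lem02} to push past the critical value.

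In case (b), where $k\le at$ and $t\le(a+1)k-n$, I would take $(F,W)\in G(\alpha_c;t,(a-1)t,k)$, which exists by hypothesis. Note $\mu_{\alpha_c}(F,W)=\frac{(a-1)t+\alpha_c k}t$; since $\alpha_c=\max\{\frac tk,\frac{n-t}{an-k}\}$ and in this regime one checks $\alpha_c=\frac{(a+1)k-n}{\,?\,}$ or rather that the relevant bound is $\frac{n-t}{an-k}$ — I would first pin down which term in the max is active under the hypothesis $t\le(a+1)k-n$, and compute $b:=\mu_{\alpha_c}(F,W)$, which should come out to be an integer (this is the point of the numerical hypotheses). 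Then I would form the extension
\[
0\lra(\cO(b)^{n-t},0)\lra(E,V)\lra(F,W)\lra0,
\]
so that $(E,V)$ has rank $n$, and check that its degree is $an-t$ and $h^0$-dimension is $k$: the degree is $b(n-t)+(a-1)t$, which the computation of $b$ should force to equal $an-t$. To apply Lemma \ref{lem02} I must verify $\dim\Ext^1((F,W),(\cO(b),0))\ge n-t$. Using Proposition \ref{prop01} with $(E_1,V_1)=(\cO(b),0)$ and $(E_2,V_2)=(F,W)$, this dimension is $C_{21}+\dim\mathbb H^0_{21}+\dim\mathbb H^2_{21}$; by Lemma \ref{l01}, $\mathbb H^2_{21}=0$ since both systems are $\alpha_c$-semistable, so I only need $C_{21}\ge n-t$ (dropping the nonnegative $\mathbb H^0$ term), and $C_{21}=t\cdot 1\cdot(g-1)+(a-1)t\cdot 1-b\cdot t+k(b-0-k)=\dots$ on $\PP^1$ with $g=0$; this is a linear computation in $a,t,k,n$ that should reduce, via $b(n-t)+(a-1)t=an-t$, to the hypothesis $t\le(a+1)k-n$ (possibly with the slack coming from the $\mathbb H^0$ term in boundary cases). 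Lemma \ref{lem02} then gives that the general such extension is $\alpha_c^+$-stable, proving $G(\alpha_c^+;n,an-t,k)\ne\emptyset$.

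Case (a) is parallel but the building block is $(\cO(a)^{?}\oplus\cO(a-1)^{?},\text{-})$-type data together with $(G,U)\in G(\alpha_c;n-t,a(n-t),k-at)$: here one expects the destabilizing subsystem to be a copy of $\cO(a-1)^t$ with all its sections (contributing $t$ to the rank and $at$ to $k$), so one should write
\[
0\lra(\cO(a-1)^t,H^0(\cO(a-1)^t))\lra(E,V)\lra(G,U)\lra0
\]
after first checking that $\mu_{\alpha_c}(\cO(a-1)^t,\,at\text{ sections})=a-1+\alpha_c\cdot\frac{at}{t}=a-1+a\alpha_c$ equals $\mu_{\alpha_c}$ of the ambient system $\frac{an-t+\alpha_c k}{n}$, which again should be exactly the content of $\alpha_c=\frac t k$ being the active bound combined with $(a-1)t\le a(an-k)+(a-2)n$ forcing $\frac{n-t}{an-k}\le\frac tk$. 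Then apply Lemma \ref{lem02} with the roles reversed — now adding the ``section-heavy'' part on the \emph{sub} side rather than the quotient side, so I may instead need the dual extension $0\to(G',U')\to(E,V)\to(\cO(a-1)^t,\cdots)\to0$ or a two-step construction; working out the correct direction for the extension so that Lemma \ref{lem02} applies verbatim is a point requiring care. In either orientation the $\Ext^1$ lower bound is again checked via Propositions \ref{prop01} and Lemma \ref{l01} ($\mathbb H^2=0$), reducing to a $C_{21}\ge$ (rank of added summand) inequality equivalent to $(a-1)t\le a(an-k)+(a-2)n$.

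The main obstacle will be the bookkeeping in case (a): identifying the correct ``balanced'' coherent system to split off, verifying that its $\alpha_c$-slope really matches the ambient slope (this is where the numerical hypothesis on $(a-1)t$ versus $a(an-k)+(a-2)n$ enters, and getting the splitting type $\cO(a)$ versus $\cO(a-1)$ right is delicate since $d=an-t$ is not a multiple of $n$), and arranging the extension in the direction for which Lemma \ref{lem02} gives $\alpha_c^+$-stability rather than $\alpha_c^-$-stability. A secondary subtlety is confirming that $b$ (resp.\ the relevant slopes) are genuinely integers under the stated hypotheses — if not, one must perturb to the nearest integer and argue the resulting system still has the right invariants, or instead build $(E,V)$ as an extension of two coherent systems of unequal but $\alpha_c$-equal slope, neither of which is a direct sum of a single line bundle, which would require a mild generalization of Lemma \ref{lem02}. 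I expect, however, that the hypotheses have been chosen exactly so that the slopes work out integrally and Lemma \ref{lem02} applies directly, so the proof should be a careful but essentially mechanical verification once the right extension is written down.
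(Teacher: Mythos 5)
Your case (b) is essentially the paper's argument (Proposition \ref{prop10}): since $k\le at$ the active branch is $\alpha_c=\frac tk$, so $b=\mu_{\alpha_c}(F,W)=a$, the extension $0\to(\cO(a)^{n-t},0)\to(E,V)\to(F,W)\to0$ has the right invariants, the bound $\dim\Ext^1((F,W),(\cO(a),0))=-2t+(a+1)k\ge n-t$ is exactly the hypothesis $t\le(a+1)k-n$, and Lemma \ref{lem02} applies verbatim. That half is fine.

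Case (a) has a genuine gap, on three counts. First, the orientation you display is the wrong one: the section-heavy system $(\cO(a-1),H^0(\cO(a-1)))^t$ has $k_1/n_1=a>k/n$, so if it sits \emph{inside} $(E,V)$ with $\alpha_c$-slope equal to $\mu_{\alpha_c}(E,V)$, it strictly destabilizes for every $\alpha>\alpha_c$; your extension could only produce $\alpha_c^-$-stable systems. The paper (Proposition \ref{prop7}) takes $(\cO(a)^{n-t},W)$, a general element of $G(\alpha_c;n-t,a(n-t),k-at)$, as the \emph{sub} and $(\cO(a-1),H^0(\cO(a-1)))^t$ as the \emph{quotient}. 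Second, your numerical bookkeeping is off: since $k\ge at$, the active branch is $\alpha_c=\frac{n-t}{an-k}$ (not $\frac tk$), and that equality alone is what makes the two slopes match; the hypothesis $(a-1)t\le a(an-k)+(a-2)n$ plays a different role, namely (via the computation in Proposition \ref{prop5}) it is equivalent to $\dim\Ext^1\bigl((\cO(a-1),H^0(\cO(a-1))),(\cO(a)^{n-t},W)\bigr)=(a-2)(n-t)+a(an-k)\ge t$, which is what allows one to choose the $t$ classes defining the extension to be linearly independent. Third, even in the correct orientation Lemma \ref{lem02} does not apply (the sub carries sections when $k>at$, and the quotient is a $t$-fold direct sum, not a single $\alpha_c$-stable system with zero sections), and no ``mild generalization'' is invoked in the paper; instead one argues directly: the extension is $\alpha_c$-semistable, so any subsystem violating $\alpha_c^+$-stability has $\alpha_c$-slope equal to $\mu_{\alpha_c}(E,V)$, hence either meets $(\cO(a)^{n-t},W)$ in zero and maps onto a factor of the quotient (contradicting the linear independence of the extension classes) or contains $(\cO(a)^{n-t},W)$, in which case an explicit slope computation shows it can only destabilize for $\alpha\le\alpha_c$. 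This stability analysis is the heart of case (a), and your proposal only gestures at it without supplying it.
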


\begin{rem}\label{r02}\begin{em}When $a=2$, $k=t\ge2$ and $n=2t$, Theorem \ref{t4} gives no information since $G(\alpha;t,t,t)=\emptyset$ for all $\alpha$ by Proposition \ref{prop08}. In fact, $G(\alpha;2t,3t,t)=\emptyset$ for all $\alpha$ (see Example \ref{ex2}).
\end{em}\end{rem}

In view of Theorems \ref{t2} and \ref{t3} and Remark \ref{r02}, and noting that, if $k\ge n$, the inequality $t\le(a+1)k-n$ holds automatically, it seems reasonable to make the following conjectures.

\begin{conj}\label{conj1}
Suppose that $a\ge2$, $1\le t\le n-1$, $k\ge n$ and $k((a+1)n-t-k)\ge n^2-1$. Then $G(\alpha;n,an-t,k)$ is non-empty for some $\alpha>0$ if and only if one of the following holds:
\begin{itemize}
\item[(a)] $at< k<an$ and $(a-1)t\le a(an-k)+(a-2)n$;
\item[(b)] $k\le at$.
\end{itemize}
\end{conj}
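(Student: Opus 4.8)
The plan is to establish the two implications separately. The ``only if'' direction is essentially formal, while the ``if'' direction is the substantive one and, as the conjectural status suggests, is probably not attainable in full with the techniques developed here.

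\textbf{``Only if''.} Suppose $G(\alpha;n,an-t,k)\neq\emptyset$ for some $\alpha>0$. Theorem \ref{t2} gives $k<an$, the upper bound appearing in (a), and the dimension inequality is part of our standing hypotheses. If neither (a) nor (b) holds, then $k>at$ (so $k\geq at$) and, using $k<an$, also $(a-1)t>a(an-k)+(a-2)n$; but this is exactly the hypothesis of Theorem \ref{t3}(a), which forces $G(\alpha;n,an-t,k)=\emptyset$ --- a contradiction. The only point requiring care --- and it is needed for the asserted equivalence to be consistent with Theorem \ref{t3} at all --- is that the boundary value $k=at$ always falls in case (b): when $k=at$ the dimension hypothesis reads $at(a+1)(n-t)\geq n^2-1$, and one checks by a short computation (setting $s=n-t\geq1$, so that the Theorem \ref{t3}(a) inequality becomes $t>(a+2)(a-1)s$) that this precludes the Theorem \ref{t3}(a) non-existence inequality.

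\textbf{``If''.} Assume the dimension hypothesis and (a) or (b); we must produce some $\alpha>0$ with $G(\alpha;n,an-t,k)\neq\emptyset$. The main engine is Theorem \ref{t4}. In case (a) one applies Theorem \ref{t4}(a), reducing the problem to showing $G(\alpha_c;n-t,a(n-t),k-at)\neq\emptyset$; in case (b) --- where $t\leq(a+1)k-n$ holds automatically since $k\geq n$ --- one applies Theorem \ref{t4}(b), reducing to $G(\alpha_c;t,(a-1)t,k)\neq\emptyset$. In either case the reduced coherent system has degree divisible by its rank, and so is governed by Theorem \ref{t1} when its number of sections exceeds its rank --- and then, because Theorem \ref{t1} gives non-emptiness for \emph{all} $\alpha>0$, the specific value $\alpha_c$ is harmless and only an arithmetic check that the appropriate dimension inequality is inherited remains --- and otherwise by Proposition \ref{prop08} (number of sections equal to, or one more than, the rank) or, for very small section counts, by Proposition \ref{prop05} and Theorems \ref{t02}, \ref{t03}, \ref{t04}; in these last cases one also has to verify that $\alpha_c$ lies in the relevant non-empty interval of $\alpha$.

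\textbf{The main obstacle.} Theorem \ref{t4} is not, by itself, enough: there are admissible data for which the reduced system has negative expected dimension, hence is empty. For instance $(n,a,t,k)=(4,2,3,5)$ satisfies (b), but Theorem \ref{t4}(b) reduces it to $G(\alpha_c;3,3,5)$, which is empty by Theorem \ref{t1} (with $a=1$: the required inequality $5\cdot1\geq 3^2-1$ fails); here $G(\alpha;4,5,5)\neq\emptyset$ must instead be deduced from the $k=n+1$ case of Proposition \ref{prop08}. Worse, for data such as $(n,a,t,k)=(4,2,3,6)$ --- with $k=n+2$, so outside Proposition \ref{prop08}, and with $d=5$ not a multiple of $n=4$ --- none of the quoted results applies at all. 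A genuine proof would therefore have to supplement Theorem \ref{t4} with additional constructions: iterated extensions in the spirit of Lemma \ref{lem02} assembling $(E,V)$ from several smaller pieces rather than one, elementary transformations of the bundle $E$, and direct estimates of $\dim(V\cap H^0(E'))$ for generic $V$ and for the subbundles $E'\subset E$ of generic splitting type that appear in Proposition \ref{prop03}. Supplying a construction --- or a finite list of them --- that covers the entire range left over by all of the above is precisely where the difficulty lies; the rest is bookkeeping and arithmetic lemmas of the kind already present in \cite{ln1,ln2}.
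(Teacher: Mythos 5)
The statement you were asked to prove is Conjecture \ref{conj1}: the paper states it without proof, offering only the evidence of Theorems \ref{t1}--\ref{t5}, so there is no proof in the paper to compare against. Your ``only if'' half is correct and coincides with the reasoning by which the paper arrives at the conjecture: non-emptiness forces $k<an$ by Theorem \ref{t2}, and if moreover $k>at$ and $(a-1)t>a(an-k)+(a-2)n$, then Theorem \ref{t3}(a) (Proposition \ref{prop5}) gives emptiness, so (a) or (b) must hold. Your consistency check at the boundary $k=at$ is also right: the dimension hypothesis there reads $at(a+1)(n-t)\ge n^2-1$, i.e. $t^2-(a^2+a-2)(n-t)t+(n-t)^2-1\le0$, whose larger root is at most $(a^2+a-2)(n-t)$, so $t>(a+2)(a-1)(n-t)$ is excluded; and your observation that Theorem \ref{t3}(b) is vacuous for $k\ge n$ matches the remark preceding Theorem \ref{t4}.

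For the ``if'' direction you do not give a proof, and your diagnosis of why the paper's tools do not yield one is accurate: Theorem \ref{t4} only transfers the problem to $G(\alpha_c;n-t,a(n-t),k-at)$ or $G(\alpha_c;t,(a-1)t,k)$, and these target spaces can be empty. Your examples check out: for $(n,d,k)=(4,5,5)$ the reduction lands on $G(\alpha_c;3,3,5)=\emptyset$ but Proposition \ref{prop08} (case $k=n+1$) saves the day, while for $(n,d,k)=(4,5,6)$ one has $k=at=6$, $6(12-3-6)=18\ge15$, neither part of Theorem \ref{t3} applies, and none of Theorems \ref{t1}, \ref{t2} (which needs $k\le an-t$ or $a\ge t$), \ref{t4}, \ref{t5} or Proposition \ref{prop08} decides non-emptiness. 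So your proposal contains no false step, but it proves only the necessity half together with partial sufficiency; the remaining sufficiency statement is precisely the open content of the conjecture, and the additional constructions you sketch (iterated extensions in the spirit of Lemma \ref{lem02}, finer parameter counts for $V\cap H^0(E')$) are the natural, but so far unrealised, route beyond what the paper itself establishes.
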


\begin{conj}\label{conj2}
Suppose that $a\ge2$, $1\le t\le n-1$ and $k\ge n$. If $G(\alpha^0;n,an-t,k)$ is non-empty for some $\alpha^0>0$, then $G(\alpha;n,an-t,k)$ is non-empty for all $\alpha\ge\alpha^0$.
\end{conj}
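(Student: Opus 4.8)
The plan is to reduce the conjecture to a statement about large $\alpha$ and then to confront that statement directly. By Theorem \ref{t01}(ii) the set $I(n,d,k)=\{\alpha\mid G(\alpha;n,d,k)\ne\emptyset\}$ is an open interval, so the hypothesis says $\alpha^0$ lies in its interior, and the conclusion that $G(\alpha;n,d,k)\ne\emptyset$ for all $\alpha\ge\alpha^0$ is equivalent to the assertion that the right-hand endpoint of $I$ is $+\infty$. Since $k\ge n$, Proposition \ref{prop02} tells us that $\alpha$-stability is independent of $\alpha$ once $\alpha>d(n-1)$, so $G(\alpha;n,d,k)$ equals a fixed space $G^\infty$ throughout that range; hence the right endpoint of $I$ is $+\infty$ precisely when $G^\infty\ne\emptyset$. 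The conjecture is therefore equivalent to the implication: if $G(\alpha^0;n,d,k)\ne\emptyset$ for some $\alpha^0>0$, then $G^\infty\ne\emptyset$.

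The reason large $\alpha$ is the decisive regime is transparent from the slope comparison $\mu_\alpha(E_1,V_1)-\mu_\alpha(E,V)=(d_1/n_1-d/n)+\alpha(k_1/n_1-k/n)$: as $\alpha\to\infty$ a proper subsystem destabilises exactly when $k_1/n_1>k/n$, so $G^\infty$ consists of those $(E,V)$ for which every proper subsystem has $k_1/n_1<k/n$, in agreement with Proposition \ref{prop02}. Now the ``moreover'' clause of Theorem \ref{t2} already furnishes a point of $G^\infty$ whenever $k\le an-t$, or whenever $k\le an-1$ and $a\ge t$; and the hypothesis forces $k<an$ by Theorem \ref{t2}. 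The only range the reduction leaves open is thus $an-t<k\le an-1$ together with $a<t$ (which forces $t\ge3$, consistent with the observation following Theorem \ref{t2}), and I would concentrate the whole remaining argument there.

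For that range the task is to construct a single large-$\alpha$-stable coherent system. By Lemma \ref{l01} and Theorem \ref{t01}(i) the underlying bundle must be of generic splitting type, so I would fix $E\cong\cO(a)^{n-t}\oplus\cO(a-1)^t$ and seek a general $V\in\Gr(k,H^0(E))$ with the property that every subbundle $E_1\subseteq E$ satisfies $\dim(V\cap H^0(E_1))/\rk E_1<k/n$; by Proposition \ref{prop02} any such $(E,V)$ lies in $G^\infty$. Because a subbundle of a direct sum of line bundles is again such a sum, this is a finite list of Brill--Noether--type inequalities, one for each admissible splitting type of $E_1$: the aim is to show that for generic $V$ the intersection $V\cap H^0(E_1)$ has the expected dimension $\max\{0,k+h^0(E_1)-h^0(E)\}$ and that this expected value obeys the required ratio bound. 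The necessary condition $k((a+1)n-t-k)\ge n^2-1$ (that is, $\beta(n,d,k)\ge0$) supplied by Theorem \ref{t2} is exactly the global count that should render these local counts consistent.

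The hard part is precisely this genericity step. For sporadic triples $(n,d,k)$ the general $V$ can acquire an unexpectedly large intersection with the sections of some distinguished subbundle, shrinking the range of admissible $\alpha$ below the expected one; this is already visible in Proposition \ref{prop07} and Remark \ref{r01}, where $G(\alpha;6,7,4)$ is nonempty only for $\frac54<\alpha<2$ rather than up to the expected $\frac{11}4$. Establishing the expected-dimension count uniformly over all subbundle types in the range $an-t<k\le an-1$, $a<t$ --- and excluding such jumping behaviour --- is the crux, and is the reason the result is offered only as a conjecture. A complete proof would most plausibly proceed either through a transversality analysis on an incidence variety in $\Gr(k,H^0(E))$ parametrising pairs $(V,E_1)$, or through an inductive wall-crossing construction built from extensions (in the spirit of Proposition \ref{prop01} and Lemma \ref{lem02}) that propagates nonemptiness from $\alpha^0$ across each critical value up into the stable range $\alpha>d(n-1)$.
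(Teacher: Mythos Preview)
The statement is a \emph{conjecture} in the paper, not a theorem; the paper offers no proof of it. Your proposal is therefore not to be compared against any argument in the paper, and you yourself acknowledge in the final paragraph that you are outlining a strategy rather than giving a proof.

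Your reduction is correct and matches what the paper establishes: by Theorem \ref{t01}(ii) the nonemptiness set is an interval, and by Proposition \ref{prop02} the conjecture is equivalent to nonemptiness of $G(\alpha;n,an-t,k)$ for large $\alpha$. Theorem \ref{t2} then settles every case except $an-t<k\le an-1$ with $a<t$, exactly the range you isolate.

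The genuine gap is the one you name, and the paper is more pessimistic about it than your proposal suggests. Example \ref{ex1} exhibits, precisely in your residual range, explicit candidates for \emph{counterexamples} to the conjecture: take $t=n-1$, $k=a(n-1)$ with $a+1<n\le a^2+a-1$; here $G(\alpha_c^+;n,(a-1)n+1,a(n-1))\ne\emptyset$ by Proposition \ref{prop7}, but the paper does not know whether the moduli space is nonempty for large $\alpha$. So the ``genericity step'' you propose may simply fail, and neither a transversality count nor a wall-crossing induction is expected to succeed without further input.

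One minor point: the example you invoke to illustrate jumping behaviour, Proposition \ref{prop07} on $G(\alpha;6,7,4)$, has $k=4<n=6$ and so lies outside the regime $k\ge n$ of Conjecture \ref{conj2}; the relevant cautionary example inside the conjecture's regime is Example \ref{ex1}.
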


Possible counterexamples to Conjecture \ref{conj2} are indicated in Example \ref{ex1}.
When $G(\alpha^0;n,an-t,k)$ is non-empty for some $\alpha^0>0$, we make no conjecture about the precise lowerbound $\alpha_{lb}$ for the set of $\alpha$ for which $G(\alpha;n,an-t,k)\ne\emptyset$. However, we do have $\alpha_c\le\alpha_{lb}\le d(n-1)$
by Theorem \ref{t2} and Proposition \ref{prop02} and, if $G(\alpha;n,an-t,k)\ne\emptyset$ for large $\alpha$, then  $\alpha_{lb}\le\frac{t(n-t)}h$, where $h=\gcd(n,k)$ (see Corollary \ref{c0}). On the other hand, we often have $\alpha_{lb}=\alpha_c$ and $G(\alpha;n,an-t,k)\ne\emptyset$ for all $\alpha>\alpha_c$. The following theorem includes many cases in which this is guaranteed.
\begin{theorem}\label{t5} Suppose that $a\ge2$, $1\le t\le n-1$ and $k\ge n$. Then $G(\alpha;n,an-t,k)\ne\emptyset$ for all $\alpha>\alpha_c$ if one of the following holds:
\begin{itemize}
\item[(a)] $(a-1)t+n\le k\le an-t$;
\item[(b)] $(a-1)t+n\le k<an$, $a\ge t$, $(a-1)t\le a(an-k)+(a-2)n$;
\item[(c)] $k=at+1$, $a\ge \max\{n-t-1,t\}$,  $(a-1)t\le a(an-k)+(a-2)n$;
\item[(d)] $k<at$, $k(at-k)\ge t^2-1$.
\end{itemize}
\end{theorem}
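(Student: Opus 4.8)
The plan is, for each of the four cases, to show that the set $I:=\{\alpha>0\mid G(\alpha;n,an-t,k)\ne\emptyset\}$ is the whole interval $(\alpha_c,\infty)$. By Theorem \ref{t01}(ii) this set is an open interval in $\RR$, and by Theorem \ref{t2} it is contained in $(\alpha_c,\infty)$; so it suffices to prove (i) that $I$ meets every interval $(\alpha_c,\alpha_c+\varepsilon)$ with $\varepsilon>0$, and (ii) that $I$ is unbounded above. An open interval with these two properties is necessarily all of $(\alpha_c,\infty)$, which is exactly the assertion.

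For (i) I would verify the hypotheses of Theorem \ref{t4}, which then gives $G(\alpha_c^+;n,an-t,k)\ne\emptyset$, i.e.\ non-emptiness on some $(\alpha_c,\alpha_c+\varepsilon)$. In cases (a)--(c) one has $k>at$ (since $k\ge(a-1)t+n>at$ in (a),(b) and $k=at+1$ in (c)), so Theorem \ref{t4}(a) is the relevant statement; in case (d) one has $k<at$ and uses Theorem \ref{t4}(b). The inequalities $at\le k<an$ (resp.\ $k\le at$ and $t\le(a+1)k-n$), and the inequality $(a-1)t\le a(an-k)+(a-2)n$, are routine from the standing hypotheses $a\ge2$, $k\ge n$, $t\le n-1$ together with the respective case assumptions (for instance $t\le(a+1)k-n$ follows from $k\ge n$, $a\ge2$, and in case (a) the inequality on $(a-1)t$ follows from $k\le an-t$). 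The point needing care is the auxiliary non-emptiness hypothesis of Theorem \ref{t4}. In cases (a) and (b) it asks for $G(\alpha_c;n-t,a(n-t),k-at)\ne\emptyset$; since $a(n-t)$ is a multiple of $n-t$ and $k-at\ge n-t$ (which is the hypothesis $(a-1)t+n\le k$), this follows from Theorem \ref{t1} when $k-at>n-t$ and from the $k=n$ part of Proposition \ref{prop08} when $k-at=n-t$, provided one checks $(k-at)\bigl((a+1)(n-t)-(k-at)\bigr)\ge(n-t)^2-1$; by concavity of $x\mapsto x\bigl((a+1)(n-t)-x\bigr)$ it is enough to check this at the endpoints of the range of $x=k-at$, and there it is automatic in the stated range of $k$ — indeed in case (a) the range $(a-1)t+n\le k\le an-t$ is non-empty precisely when $(a-1)(n-t)\ge t$, which is exactly what makes the endpoint value dominate. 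In case (c) the auxiliary system has type $(n-t,a(n-t),1)$, handled by the $k=1$ statement of Theorem \ref{t03}: the hypothesis $a\ge n-t-1$ guarantees $l\ge1$, and $\alpha_c$ lies in the $\alpha$-interval of that statement because both $\tfrac tk$ and $\tfrac{n-t}{an-k}$ are $<1$ (using $k=at+1$) while that interval reaches past $1$; the degenerate case $n-t=1$ is instead covered by the $k=n$ part of Proposition \ref{prop08}. In case (d) the auxiliary system has type $(t,(a-1)t,k)$ with $(a-1)t$ a multiple of $t$ and $k\ge n>t$, so Theorem \ref{t1} applies, its numerical condition being exactly the hypothesis $k(at-k)\ge t^2-1$.

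For (ii): in cases (a), (b) and (c) the standing and case hypotheses yield $k\le an-t$, resp.\ $k\le an-1$ with $a\ge t$, resp.\ $k=at+1\le an-1$ with $a\ge t$, so Theorem \ref{t2} already gives $G(\alpha;n,an-t,k)\ne\emptyset$ for all large $\alpha$; and the same holds in case (d) whenever $k\le an-t$ or $a\ge t$. The one remaining situation — which is genuinely new and is the main obstacle — is case (d) with $k>an-t$ and $a<t$ (this does occur, e.g.\ $(n,a,t,k)=(8,3,7,18)$), where Theorem \ref{t2} gives nothing. Here I would argue directly: by Proposition \ref{prop02} it is enough, for $\alpha$ large, to exhibit a coherent system $(E,V)$ of type $(n,an-t,k)$ with $k_1/n_1<k/n$ for every proper subsystem $(E_1,V_1)$. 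I would take $E=\cO(a)^{n-t}\oplus\cO(a-1)^t$ of generic splitting type and $V\subset H^0(E)$ generic of dimension $k$, and estimate, for each subbundle $E_1\subset E$, the dimension of $V\cap H^0(E_1)$ by a Grassmannian (Schubert-calculus) count: for fixed $E_1$ the locus of $V\in\Gr(k,H^0(E))$ with $\dim\bigl(V\cap H^0(E_1)\bigr)$ exceeding $\lfloor n_1k/n\rfloor$ has codimension at least $j(j-j_0)$, where $j=\lfloor n_1k/n\rfloor+1$ and $j_0$ is the generic value of $\dim(V\cap H^0(E_1))$, and this must beat the dimension of the family of subbundles of a given rank and splitting type — the most dangerous being $\cO(a)^{s}\oplus\cO(a-1)^{r}$ with $s\le n-t$, $r\le t$. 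The hypothesis $k(at-k)\ge t^2-1$, which says precisely that the system of type $(t,(a-1)t,k)$ has non-negative expected dimension, is what is needed for the codimension estimate to win uniformly in $n_1$; making this comparison work (or, alternatively, extracting the statement from a sharper form of the large-$\alpha$ analysis underlying Theorem \ref{t2}) is the technical heart of the argument, and is where I expect essentially all of the difficulty to lie.

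Putting (i) and (ii) together, $I$ is an open interval meeting every right neighbourhood of $\alpha_c$ and unbounded above, hence $I=(\alpha_c,\infty)$, which proves the theorem in each of the four cases.
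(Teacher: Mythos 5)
For parts (a)--(c) your argument is the paper's own: non-emptiness at $\alpha_c^+$ via Theorem \ref{t4} (that is, Propositions \ref{prop7} and \ref{prop10}, packaged in the paper as Corollaries \ref{c2} and \ref{c3}), non-emptiness for large $\alpha$ via the final clause of Theorem \ref{t2}, and the two glued together by the interval statement of Theorem \ref{t01}(ii). Your verifications of the auxiliary hypotheses (the inequality \eqref{eq61}, and the non-emptiness of $G(\alpha_c;n-t,a(n-t),k-at)$, resp.\ $G(\alpha_c;n-t,a(n-t),1)$, via Theorem \ref{t1}, Remark \ref{r1} and Theorem \ref{t03}) are correct and are the same ones the paper makes, up to the trivial rank-one degenerations.

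The one place where you stop short of a complete proof is case (d) in the sub-range $an-t<k<at$ with $a<t$, where the large-$\alpha$ clause of Theorem \ref{t2} is unavailable; your example $(n,a,t,k)=(8,3,7,18)$ does satisfy the hypotheses of (d) (here $an-t=17<18<21=at$, $a=3<7=t$, and $k(at-k)=54\ge 48=t^2-1$), and your proposed Grassmannian/parameter count in the style of Propositions \ref{prop2}--\ref{prop4} is only sketched, so as written your treatment of (d) is incomplete there. You should know, however, that the paper's proof has exactly the same issue: (d) is deduced from Corollary \ref{c7}, whose proof simply cites Theorem \ref{t2} for the unboundedness of the interval $I(n,d,k)$, and Theorem \ref{t2} gives large-$\alpha$ existence only when $k\le an-t$, or when $k\le an-1$ and $a\ge t$ --- neither holds in the range above. (One cannot appeal to openness of the interval alone: by Remark \ref{r5} the interval may a priori be bounded, and unboundedness for $k\ge n$ is precisely the content of Conjecture \ref{conj2}.) So what you flagged is a genuine catch about the published argument rather than a defect peculiar to your write-up; completing (d) in that sub-range would require either extending the counting arguments of Propositions \ref{prop2} and \ref{prop4} to $k>an-t$ with $a<t$ (as you propose, and which a rough count for $(8,17,18)$ suggests should work), or restricting the hypotheses of (d) by adding ``$k\le an-t$ or $a\ge t$''.
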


There are other cases for which the same result holds, including
\begin{itemize}
\item $a\ge2$, $t=1$, $n\le k\le a$ or $n+a-1\le k<an$ (Proposition \ref{prop8}); note that this covers the case $n=2$ completely;
\item $n\ge3$, $a\ge n-1$, $t=n-1$, $a(n-1)<k<an$ (Corollary \ref{c5}).
\end{itemize}

\section{Necessary conditions}\label{nec}

In this section, we prove the following proposition, where we do not assume that $k\ge n$. Note that $\beta(n,d,k)=k(d+n-k)-n^2+1$ (see \eqref{eq01}).

\begin{prop}\label{prop1}
Let $n$, $a$, $k$ be positive integers with $n\ge2$ and let $d=an-t$ with $0\le t\le n-1$. Suppose that
$G(\alpha;n,d,k)\ne\emptyset$ for some $\alpha>0$. Then
\begin{itemize}
\item $k((a+1)n-t-k)\ge n^2-1$;
\item $G(\alpha;n,d,k)$ is smooth and irreducible of dimension $k((a+1)n-t-k)- n^2+1$; 
\item either $t=0$ or 
\begin{equation}\label{eq00}
t>0,\ k<an\ \mbox{and}\ \alpha>\max\left\{\frac{t}k,\frac{n-t}{an-k}\right\}.
\end{equation}
\end{itemize}\end{prop}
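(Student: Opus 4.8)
The plan is to deduce the first two bullets directly from Theorem \ref{t01}, to obtain $\alpha>\frac tk$ from Proposition \ref{prop04}, and then, when $t>0$, to establish both $k<an$ and the sharper bound $\alpha>\frac{n-t}{an-k}$ by confronting $\alpha$-stability with a single, carefully chosen subsystem.

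For the first two bullets, observe that with $g=0$ we have $\beta(n,d,k)=k(d+n-k)-n^2+1=k((a+1)n-t-k)-n^2+1$. Hence, if $G(\alpha;n,d,k)\ne\emptyset$, Theorem \ref{t01} yields at once $\beta(n,d,k)\ge0$, that is, $k((a+1)n-t-k)\ge n^2-1$, together with smoothness, irreducibility and the dimension statement. If $t=0$ there is nothing more to prove, so from now on assume $1\le t\le n-1$; then Proposition \ref{prop04} (applied with $k>0$) already gives $\alpha>\frac tk$.

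For the last bullet I would take a general $(E,V)\in G(\alpha;n,d,k)$. By Theorem \ref{t01}(i) it is of generic splitting type, so $E\cong\cO(a)^{n-t}\oplus\cO(a-1)^t$ (note $a\ge1$ since $d>0$) and $V$ is generic in $\Gr(k,H^0(E))$, where $h^0(E)=(a+1)n-t$. Consider the proper subsystem $(E_1,V_1)$ with $E_1=\cO(a)^{n-t}\subset E$ the obvious subbundle (proper because $0<n-t<n$) and $V_1=V\cap H^0(E_1)$; since $V$ is generic and $h^0(E_1)=(a+1)(n-t)$, one gets $\dim V_1=\max\{0,k-at\}$. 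Now split into two cases. If $k<at$, then $k<at\le(n-1)a<an$, so $k<an$, and since $an-k>0$ a short computation shows $\frac{n-t}{an-k}<\frac tk$ (this inequality is equivalent to $k<at$); thus $\alpha>\frac tk=\max\{\frac tk,\frac{n-t}{an-k}\}$, as required. If $k\ge at$, then $\dim V_1=k-at$, and the $\alpha$-stability inequality $\mu_\alpha(E_1,V_1)<\mu_\alpha(E,V)$ simplifies, after clearing denominators and cancelling, to $\alpha(an-k)>n-t$. Since $n-t>0$ this forces $an-k>0$, hence $k<an$ and $\alpha>\frac{n-t}{an-k}$; and $\frac{n-t}{an-k}\ge\frac tk$ precisely because $k\ge at$, so $\alpha>\max\{\frac tk,\frac{n-t}{an-k}\}$. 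In both cases \eqref{eq00} holds.

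The slope manipulation down to $\alpha(an-k)>n-t$ is routine. The one genuinely substantive point — and where I would be most careful — is the choice of the subsystem $\cO(a)^{n-t}$ combined with the use of the genericity of $V$ to pin down $\dim V_1$ exactly: this is what converts a single instance of the stability inequality into the sharp bounds of \eqref{eq00}. The remaining delicacies are bookkeeping: treating the case $k<at$ separately (there $V_1=0$ and one simply invokes Proposition \ref{prop04}), noting that nothing in the argument requires $k\ge n$, confirming that $(E_1,V_1)$ is a genuine proper subsystem, and checking in each case which of the two candidate lower bounds for $\alpha$ dominates.
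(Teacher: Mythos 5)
Your proof is correct and follows essentially the same route as the paper: the first two bullets come from Theorem \ref{t01}, and the bounds in \eqref{eq00} come from testing $\alpha$-stability against the subsystem $\cO(a)^{n-t}$ (with $V_1=0$, equivalently Proposition \ref{prop04}, for $\alpha>\frac tk$, and with $V_1=V\cap H^0(\cO(a)^{n-t})$ for $\alpha(an-k)>n-t$ when $k\ge at$). The only cosmetic difference is that the paper does not need genericity of $V$ to pin down $\dim V_1$ exactly: the linear-algebra bound $\dim V_1\ge k-at$, valid for every $V$, already suffices since a larger $V_1$ only strengthens the stability inequality.
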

\begin{proof}
In  view of Theorem \ref{t01} and the definitions, we need only prove that, if $t>0$, then \eqref{eq00} holds.
Suppose then that $t>0$. By Theorem \ref{t01} again, the generic element of $G(\alpha;n,d,k)$ is of the form $(E,V)$ with $E\cong{\mathcal O}(a)^{n-t}\oplus {\mathcal O}(a-1)^t$. 
Consider the subsystem $(E',V')$, where $E'={\mathcal O}(a)^{n-t}$ and $V'=\emptyset$. 
The stability condition then gives 
$$a< a-\frac{t}n+\alpha\frac{k}n,$$
hence $\alpha>\frac{t}k$. 
Consider the subsystem $(E',V')$, where $E'={\mathcal O}(a)^{n-t}$ and $V'=V\cap H^0(E')$. Note that
$$\dim V'\ge k-(h^0(E)-h^0(E'))=k-at.$$
If $k>at$, the condition gives
$$a+\alpha\frac{k-at}{n-t}<a-\frac{t}n+\alpha\frac{k}n,$$
which simplifies to
$$\alpha(an-k)>n-t.$$
This implies that $an-k>0$ and then $\alpha>\frac{n-t}{an-k}$, completing the proof. 
(For the bound $\alpha>\frac{t}k$, see also Proposition \ref{prop04}.)
\end{proof}

\section{Existence for large $\alpha$}\label{exist1}

In this section we obtain existence results for large $\alpha$. This enables us in particular to complete the proofs of Theorems \ref{t1} and \ref{t2}. Our first result is the following proposition.

\begin{prop}\label{prop2}
Suppose that $n\ge2$, $a\ge2$, $0\le t\le n-1$ and $E={\mathcal O}(a)^{n-t}\oplus {\mathcal O}(a-1)^t$. Suppose further that $n<k\le an-t$. Then there exists a subspace $V$ of $H^0(E)$ such that $(E,V)$ admits no subsystem $(E_1,V_1)$ of type $(n_1,d_1,k_1)$ with 
\begin{equation}\label{eq41}\frac{k_1}{n_1}\ge\frac{k}n.
\end{equation}
\end{prop}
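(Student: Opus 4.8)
The plan is to choose $V$ generically and show that any destabilizing subsystem would force an impossible inequality on its invariants. First I would fix $E = \cO(a)^{n-t}\oplus\cO(a-1)^t$ and pick $V\subset H^0(E)$ generic in $\Gr(k,H^0(E))$; here $h^0(E) = (a+1)(n-t) + a\cdot t = (a+1)n - t$, so the hypothesis $k\le an-t$ guarantees $k < h^0(E)$ (since $an-t < (a+1)n-t$), and $k>n$ is assumed. Given a subsystem $(E_1,V_1)$ of type $(n_1,d_1,k_1)$ with $0 < n_1 \le n$, the subbundle $E_1\hookrightarrow E$ has $E_1\cong\bigoplus\cO(b_i)$ with each $b_i\le a$ (as a subbundle of $E$ cannot have a summand of larger degree than the largest summand of $E$), so $d_1 = \deg E_1 \le a n_1$, and consequently $h^0(E_1)\le (a+1)n_1$.

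The key step is the genericity count. Since $V_1\subseteq V\cap H^0(E_1)$ and $V$ is a generic $k$-plane, the expected dimension of $V\cap H^0(E_1)$ is $k - \big(h^0(E) - h^0(E_1)\big)$, and genericity makes this an actual upper bound, i.e.
\[
k_1 \le \max\{0,\ k - h^0(E) + h^0(E_1)\} \le \max\{0,\ k - \big((a+1)n - t\big) + (a+1)n_1\}.
\]
If the right-hand side is $0$ then $k_1=0$ and $k_1/n_1 = 0 < k/n$, so we may assume $k_1 \le k - (a+1)n + t + (a+1)n_1$. Then \eqref{eq41} would give $n k_1 \ge k n_1$, hence
\[
n\big(k - (a+1)n + t + (a+1)n_1\big) \ge k n_1,
\]
which rearranges to $(n - n_1)\,k \ge n\big((a+1)(n - n_1) - t\big)$, i.e. $(n-n_1)\big(k - (a+1)n\big) \ge -\,nt$. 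For $n_1 < n$ this reads $k \le (a+1)n - \dfrac{nt}{n-n_1}$, and since $n - n_1 \ge 1$ we get $k \le (a+1)n - \dfrac{nt}{n-n_1} \le (a+1)n - t$ only in the weakest case; I would instead push the other direction: combined with $k\le an-t$ the inequality $(n-n_1)(k-(a+1)n)\ge -nt$ becomes $nt \ge (n-n_1)\big((a+1)n-k\big) \ge (n-n_1)\big((a+1)n - (an-t)\big) = (n-n_1)(n+t)$, forcing $n - n_1 \le \dfrac{nt}{n+t} < t \le n-1$ — this is not yet a contradiction, so one must also use the subbundle structure more carefully, namely that $h^0(E_1)$ is strictly smaller than $(a+1)n_1$ unless $E_1$ is a sum of copies of $\cO(a)$, of which $E$ has only $n-t$, giving $h^0(E_1) \le (a+1)(n-t) + a(n_1 - (n-t)) = (a+1)n_1 - (n_1 - (n-t))$ when $n_1 > n-t$. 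Feeding this sharper bound into the count removes the remaining slack.

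The main obstacle I anticipate is exactly this: the crude bound $d_1\le an_1$ is too weak, and one must exploit that $E$ has only $n-t$ summands of degree $a$, so a subbundle of rank $n_1$ has degree at most $a n_1 - \max\{0, n_1-(n-t)\}$. The clean way to organize this is to first handle the case $n_1\le n-t$ (where $E_1$ can sit inside $\cO(a)^{n-t}$, $h^0(E_1)\le (a+1)n_1$, and the count above directly contradicts $k\le an-t$ after noting $n-n_1\ge t$ forces... ) and then the case $n_1 > n-t$ separately using the improved $h^0$ bound. I would also need the standard fact, used implicitly, that for generic $V$ the dimension of $V\cap H^0(E_1)$ is as small as the ambient dimensions allow — this follows from a dimension count on the incidence variety $\{(V, E_1) : \dim(V\cap H^0(E_1)) \ge k_1\}$ inside $\Gr(k,H^0(E))$, which is why the phrase "there exists a subspace $V$" (rather than "for generic $V$") in the statement is the natural formulation.
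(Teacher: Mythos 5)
There is a genuine gap at the central step. The ``standard fact'' you invoke --- that for a generic $V$ one has $\dim(V\cap H^0(E_1))\le\max\{0,\,k-h^0(E)+h^0(E_1)\}$ for \emph{every} subsheaf $E_1\subset E$ --- is false, because the subsheaves $E_1$ (more precisely, their embeddings in $E$) move in positive-dimensional families, while the bound you quote is only valid for a single fixed subspace of $H^0(E)$. Concretely, take $t=0$, $k=an$, $n\ge a+2$: the line subbundles $L\cong\cO(a)\subset E=\cO(a)^n$ form an $(n-1)$-dimensional family, whereas the locus of $V\in\Gr(k,H^0(E))$ meeting a fixed $H^0(L)$ has codimension only $(a+1)n-k-(a+1)+1=n-a\le n-2$; hence a generic $V$ meets $H^0(L)$ for some $L$ even though $k-h^0(E)+h^0(L)=a+1-n<0$. (This particular $L$ does not satisfy \eqref{eq41}, but it shows that your key inequality $k_1\le k-h^0(E)+h^0(E_1)$ cannot simply be asserted for all subsystems.) The incidence variety you mention at the end is indeed the right tool, but carrying out that count forces you to add to your estimate the dimension of the family of embeddings, $\dim\Hom(E_1,E)-\dim\Aut(E_1)\le n((a+1)n_1-d_1)-tn_1-n_1^2$, a typically positive quantity that works against you; your numerics omit it entirely.

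Once this term is included, the inequality that must be proved is \eqref{eq43}, and your computation amounts only to showing that the factor $d_1+n_1-(a+1)n+t+k-k_1$ multiplying $k_1$ is negative --- essentially Lemma \ref{lem1} of the paper. That alone does not yield \eqref{eq43}; the real work is Lemma \ref{lem2}, where one observes that the left-hand side is quadratic in $k$ with positive leading coefficient, reduces to the endpoints $k=n$ and $k=an-t$, and then, by linearity in $d_1$, to the endpoints $d_1=n_1$ and $d_1=(a-1)n_1+n-t$ (using, as you do, that at most $n-t$ summands of $E_1$ can have degree $a$ and that one may assume every summand of $E_1$ has degree $\ge1$). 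So your overall strategy --- generic $V$ plus a dimension count exploiting the splitting type of $E$ --- is the same as the paper's, but the claim that the sharper $h^0$ bound ``removes the remaining slack'' is not justified: with the family dimension correctly included there is genuinely more slack, and closing it requires the endpoint analysis above rather than the pointwise genericity bound you rely on.
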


We shall prove this proposition by showing that the dimension of the family of coherent systems $(E_1,V_1)$ which satisfy \eqref{eq41} is less than $\dim\Gr(k,h^0(E))$. Note that, if $E_1={\mathcal O}(b)\oplus E_1'$ with $b\le0$, then we can replace $(E_1,V_1)$ by $(E_1',V_1\cap H^0(E_1'))$ of type $(n_1-1,d_1',k_1')$ with $k_1'\ge k_1-1$ and \eqref{eq41} 
together with the condition $k\ge n$
imply that $\frac{k_1'}{n_1-1}\ge\frac{k}n$. Thus we can assume that every direct factor of $E_1$ has degree $\ge1$; combining this with other obvious inequalities, we have
\begin{equation}\label{eq42}
n_1\le d_1\le \min\{an_1,(a-1)n_1+n-t\},\ \ k_1\le h^0(E_1)= d_1+n_1.
\end{equation}
We can now count the number of parameters on which the choice of $V$ depends.

In the first place, if we fix $n_1$ and $d_1$, the fact that the degree of any rank one direct factor of $E_1$ lies between $1$ and $a$ means there are finitely many choices for $E_1$. For each of these, the inclusion of $E_1$ as a subbundle of $E$ depends on at most 
$$\dim\Hom(E_1,E)-\dim\Aut(E_1)\le n((a+1)n_1-d_1)-tn_1-n_1^2$$
parameters by Riemann-Roch. Next, the choice of $V_1\subset H^0(E_1)$ depends on at most 
$$\dim\Gr(k_1,d_1+n_1)=k_1(d_1+n_1-k_1)$$
parameters. Finally, the choice of $V\subset H^0(E)$ containing $V_1$ corresponds to a point in $Gr(k-k_1, H^0(E)/V_1)$ and depends on
$$(k-k_1)((a+1)n-t-k_1-(k-k_1))=(k-k_1)((a+1)n-t-k)$$
parameters. We need to prove that the sum of these numbers is strictly less than
$$\dim\Gr(k,(a+1)n-t)=k((a+1)n-t-k);$$
in other words, we need to prove that the hypotheses of Proposition \ref{prop2} imply
\begin{equation}\label{eq43}
n((a+1)n_1-d_1)-tn_1-n_1^2+k_1(d_1+n_1-(a+1)n+t+k-k_1)<0.
\end{equation}

\begin{lem}\label{lem1}
Suppose that \eqref{eq41} and \eqref{eq42} hold and either $t=0$, $k<(a+1)n$ or $t>0$, $k<an$. Then
\begin{equation}\label{eq44}
d_1+n_1-(a+1)n+t+k-k_1<0.
\end{equation}\end{lem}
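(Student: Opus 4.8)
The plan is to prove the inequality \eqref{eq44} directly from the constraints in \eqref{eq41}, \eqref{eq42}, together with the case hypothesis bounding $k$. The key observation is that \eqref{eq42} gives $d_1+n_1\ge 2n_1$ and $k_1\le d_1+n_1$, while \eqref{eq41} says $k_1\ge \frac{k}{n}n_1$, so $n_1$ and $k_1$ are comparable to each other and, via $k_1\le d_1+n_1$, bounded in terms of $d_1$. First I would rewrite the left-hand side of \eqref{eq44} as
\[
(d_1+n_1-k_1)-\big((a+1)n-t-k\big),
\]
and aim to show $d_1+n_1-k_1 < (a+1)n-t-k$. The term $(a+1)n-t$ is precisely $h^0(E)$, so this is saying that the ``codimension'' $h^0(E_1)-k_1$ of $V_1$ in $H^0(E_1)$ is strictly smaller than the codimension $h^0(E)-k$ of $V$ in $H^0(E)$; this is plausible because $(E_1,V_1)$ is a proper subsystem with $\frac{k_1}{n_1}\ge\frac kn$, so $V_1$ is ``at least as full'' relative to its section space.

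The main step is to make that heuristic precise. I expect to use $h^0(E_1)=d_1+n_1\le (a+1)n_1$ from \eqref{eq42} (in the case $t>0$ one has the sharper bound $d_1\le(a-1)n_1+n-t$, giving $h^0(E_1)\le an_1+n-t$). Combining with $k_1\ge\frac kn n_1$, we get
\[
d_1+n_1-k_1 \ \le\ (a+1)n_1 - \tfrac{k}{n}\,n_1 \ =\ \tfrac{n_1}{n}\big((a+1)n-k\big)
\]
in the case $t=0$, and the analogous bound $d_1+n_1-k_1\le an_1+n-t-\frac kn n_1=\frac{n_1}{n}(an-k)+n-t$ when $t>0$. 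Under the hypothesis $k<(a+1)n$ (resp.\ $k<an$), the coefficient $(a+1)n-k$ (resp.\ $an-k$) is positive, so the right-hand side is strictly increasing in $n_1$; since $(E_1,V_1)$ is proper we have $n_1\le n-1$ (indeed $n_1<n$), and substituting $n_1=n-1$... wait, one must be careful: $n_1\le n$ is automatic but $n_1=n$ would force $E_1=E$, hence $n_1\le n-1$, so I plug in $n_1\le n-1$. For $t=0$ this gives $d_1+n_1-k_1\le \frac{n-1}{n}\big((a+1)n-k\big)<(a+1)n-k=(a+1)n-t-k$, which is exactly \eqref{eq44}. For $t>0$ one gets $d_1+n_1-k_1\le\frac{n-1}{n}(an-k)+n-t<(an-k)+n-t=(a+1)n-t-k$, again \eqref{eq44}. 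So both cases reduce to the same one-line estimate once the correct bound on $h^0(E_1)$ is inserted.

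The main obstacle I anticipate is getting the two cases ($t=0$ versus $t>0$) to use exactly the right bound on $h^0(E_1)=d_1+n_1$: for $t>0$ one must genuinely invoke $d_1\le (a-1)n_1+n-t$ from \eqref{eq42} rather than the weaker $d_1\le an_1$, since with $d_1=an_1$ and $n_1=n-1$ the inequality would fail when $k=an-1$ (which is allowed in the $t>0$ case). A secondary point to check is strictness: it comes for free from $n_1\le n-1<n$ together with the positivity of $(a+1)n-k$ or $an-k$, as long as that quantity is genuinely positive — but the hypotheses $k<(a+1)n$ (resp.\ $k<an$) guarantee this. Once those bookkeeping points are handled the proof is the short chain of inequalities above, so I would write it as two parallel short paragraphs, or unify them by treating $(a+1)n$ and $an$ uniformly via the quantity appearing in the relevant case hypothesis.
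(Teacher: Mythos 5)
Your proof is correct and is essentially the paper's own argument: both use $k_1\ge n_1k/n$ from \eqref{eq41}, the bound $d_1\le\min\{an_1,(a-1)n_1+n-t\}$ from \eqref{eq42} (with the sharper option exactly when $t>0$), and the case hypothesis $k<(a+1)n$ resp.\ $k<an$ together with $n_1<n$ to get strictness, differing only in whether the chain of inequalities is arranged around $k-k_1$ or $d_1+n_1-k_1$.
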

\begin{proof}
Suppose first that $t=0$. Using \eqref{eq41} , \eqref{eq42} and the assumption that $k<(a+1)n$, we have
$$k-k_1\le k-\frac{n_1k}n=\frac{(n-n_1)k}n<(a+1)(n-n_1)\le(a+1)n-d_1-n_1,$$
which gives \eqref{eq44}.

If $t>0$, we obtain similarly

$$k-k_1\le\frac{(n-n_1)k}n<a(n-n_1)\le an-d_1+n-n_1-t$$
by \eqref{eq41}, \eqref{eq42} and the assumption that $k<an$. This gives \eqref{eq44}.
\end{proof}

Given \eqref{eq44}, it follows from \eqref{eq41}
that \eqref{eq43} holds if
\begin{equation}\label{eq45}
n((a+1)n_1-d_1)-tn_1-n_1^2+\frac{n_1k}n\left(d_1+n_1-(a+1)n+t+k-\frac{n_1k}n\right)<0.
\end{equation}

\begin{lem}\label{lem2}
Suppose that $a\ge2$, $0\le t\le n-1$, $n<k\le an-t$ and \eqref{eq41} and \eqref{eq42} hold. Then \eqref{eq45} holds.
\end{lem}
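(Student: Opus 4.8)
The plan is to regard the left-hand side of \eqref{eq45}, which I denote $\Phi(n_1,d_1)$, as an affine function of the single variable $d_1$ (with $n,a,t,k$ and $n_1$ held fixed). By \eqref{eq42} the admissible values of $d_1$ form the interval $[\,n_1,\delta\,]$, where $\delta:=\min\{an_1,\,(a-1)n_1+n-t\}$, and an affine function on an interval attains its maximum at an endpoint, so it is enough to prove $\Phi(n_1,n_1)<0$ and $\Phi(n_1,\delta)<0$. Only proper subsystems occur in Proposition \ref{prop2}, and a proper subsystem satisfying \eqref{eq41} has $n_1<n$ (otherwise $k_1=k$, forcing $V_1=V$ and $E_1=E$); so I will work under the extra assumption $n_1\le n-1$. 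One computes $\Phi(n,an-t)=0$, corresponding to $E_1=E$, so this restriction is necessary: the right statement is that $\Phi\le 0$ on the region \eqref{eq41}--\eqref{eq42}, strictly except at that one point.

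For the lower endpoint, clearing denominators and factoring gives the identity
\[
\Phi(n_1,n_1)=\frac{n_1(k-n)}{n}\left(k+t-an-\frac{n_1(k-n)}{n}\right).
\]
Here $n_1(k-n)/n>0$ because $k>n$, while $k+t-an\le 0$ because $k\le an-t$, so the bracket is negative; hence $\Phi(n_1,n_1)<0$.

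For the upper endpoint I distinguish the two possibilities for $\delta$. If $\delta=an_1$, i.e.\ $n_1\le n-t$, a direct computation yields
\[
\Phi(n_1,an_1)=n_1\left(\frac{k(an-k)}{n^2}(n_1-n)+\frac{k-n}{n}(n_1+t-n)\right),
\]
in which $n_1-n\le-t\le 0$, $n_1+t-n\le 0$, $k(an-k)\ge 0$ and $k-n>0$, so $\Phi(n_1,an_1)\le 0$; it is strict because the two summands cannot vanish simultaneously once $n_1\le n-1$ (the second vanishes only at $n_1=n-t$, where then $t\ge1$ and the first equals $-\tfrac{k(an-k)}{n^2}t<0$, using $k\le an-t<an$). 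If instead $\delta=(a-1)n_1+n-t$, i.e.\ $n_1\ge n-t$ (so $t\ge1$), one gets
\[
\Phi\bigl(n_1,(a-1)n_1+n-t\bigr)=(n_1-n)\left(\Bigl(\tfrac{k(an-k)}{n^2}-1\Bigr)n_1+n-t\right).
\]
Since $n_1\le n-1<n$ the first factor is negative, so it remains to show $\bigl(1-\tfrac{k(an-k)}{n^2}\bigr)n_1<n-t$. This is immediate when $k(an-k)\ge n^2$; otherwise set $c:=1-\tfrac{k(an-k)}{n^2}\in(0,1)$ and note that, because $a\ge 2$, the inequality $k(an-k)<n^2$ forces $k>\tfrac n2\bigl(a+\sqrt{a^2-4}\bigr)$, where $k\mapsto k(an-k)$ is decreasing; hence $k(an-k)\ge(an-t)t$ and $c\le\tfrac{n^2-ant+t^2}{n^2}$. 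With $n_1\le n-1$ the claim then reduces to the elementary inequality $(n^2-ant+t^2)(n-1)<n^2(n-t)$, equivalently $n^2+n^2t(a-1)-nt(a+t)+t^2>0$, which holds for $a\ge 2$ and $1\le t\le n-1$ (for $a=2$ the left side is a downward parabola in $t$ with roots $t=n$ and $t=\tfrac{n}{1-n}$, hence positive on $[1,n-1]$; for $a\ge 3$ one has $na-n-a-t\ge 0$, so the surviving terms $n^2+t^2$ suffice).

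I expect the genuine obstacle to be exactly this final sub-case: the upper-endpoint estimate when $\delta=(a-1)n_1+n-t$ and $k(an-k)<n^2$, the only place where one must really play the bound $n_1\le n-1$ against the range $n<k\le an-t$. Everything else is bookkeeping once the three factorisations of $\Phi$ above have been recorded.
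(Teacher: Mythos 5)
Your proof is correct, and it takes a noticeably different reduction from the paper's. The paper fixes $(n_1,d_1)$ and exploits convexity of \eqref{eq45} in $k$ (positive leading coefficient $\tfrac{n_1(n-n_1)}{n^2}$), so that it suffices to check $k=n$ (non-strict, giving $(n-n_1)(n_1-d_1)\le0$) and $k=an-t$ (strict); at $k=an-t$ it then uses linearity in $d_1$ and finally linearity/monotonicity in $a$ and $n_1$ to reduce the hard endpoint to $a=2$, $n_1\in\{1,n-1\}$. You instead never reduce in $k$: you use only linearity in $d_1$, and at each endpoint you carry $k$ through via closed-form factorisations of $\Phi(n_1,n_1)$, $\Phi(n_1,an_1)$ and $\Phi(n_1,(a-1)n_1+n-t)$ (I checked all three identities and the sign analyses; they are right). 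Your delicate subcase — $d_1=(a-1)n_1+n-t$ with $k(an-k)<n^2$ — is handled by noting that $k>n$ then forces $k$ past the larger root of $k(an-k)=n^2$, where $k\mapsto k(an-k)$ is decreasing, so $k(an-k)\ge(an-t)t$, and then the elementary inequality $(n^2-ant+t^2)(n-1)<n^2(n-t)$ finishes; this replaces the paper's reduction to $a=2$, $n_1\in\{1,n-1\}$. What each buys: the paper's convexity-in-$k$ step makes the endpoint computations extremely short, while your version gives explicit factorisations that show exactly where strictness comes from and works uniformly in $k$. You also make explicit a point the paper leaves tacit: the statement needs $n_1\le n-1$ (indeed $\Phi(n,an-t)=0$), which is legitimate since only proper subsystems occur in Proposition \ref{prop2}; the paper's own proof uses $n_1<n$ implicitly both in the ``positive leading coefficient'' claim and in the factor $(n-n_1)$.
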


\begin{proof}
For fixed $n_1$ and $d_1$, the left hand side of \eqref{eq45} is a quadratic in $k$ with positive leading coefficient. It is therefore sufficient to prove that non-strict inequality holds in \eqref{eq45} for $k=n$ and strict inequality for $k=an-t$. 

For $k=n$, the left hand side of \eqref{eq45} becomes
$$n((a+1)n_1-d_1)-tn_1-n_1^2+n_1(d_1-an+t)=(n-n_1)(n_1-d_1)\le0$$
by \eqref{eq42}.

Now suppose $k=an-t$. The formula \eqref{eq45} becomes
\begin{equation}\label{eq46}
n((a+1)n_1-d_1)-tn_1-n_1^2+\left(an_1-\frac{tn_1}n\right)\left(d_1-(a-1)n_1-n+\frac{tn_1}n\right)<0.
\end{equation}
By \eqref{eq42}, it is sufficient to prove this for $n_1\le d_1\le (a-1)n_1+n-t$.
Since \eqref{eq46} is linear in $d_1$, it is in fact sufficient to prove it for $d_1=n_1$ and for $d_1=(a-1)n_1+n-t$.

For $d_1=n_1$, the left hand side of \eqref{eq46} becomes
$$-(a-1)^2n_1^2+\frac{2(a-1)tn_1^2}n-\frac{t^2n_1^2}{n^2}=-n_1^2\left(a-1-\frac{t}n\right)^2<0.$$

Now suppose that $d_1=(a-1)n_1+n-t$. In this case, \eqref{eq46} becomes
$$n(2n_1-n+t)-tn_1-n_1^2+\left(an_1-\frac{tn_1}n\right)\left(-t+\frac{tn_1}n\right)<0.$$
Simplifying, this becomes
$$(n-n_1)\left(-n+n_1+t-\frac{atn_1}n+\frac{t^2n_1}{n^2}\right)<0.$$
We therefore need to prove that 
\begin{equation}\label{eqnew}
-n+n_1+t-\frac{atn_1}n+\frac{t^2n_1}{n^2}<0.
\end{equation}
If $t=0$, this is clear. If $t>0$, it is sufficient to prove \eqref{eqnew} for $a=2$ with $n_1=1$ and $n_1=n-1$. In fact, for $a=2$, $n_1=1$, we obtain
$$-n+1+t-\frac{2t}n+\frac{t^2}{n^2}\le\frac{t}n\left(\frac{t}n-2\right)<0.$$
For $a=2$, $n_1=n-1$, we have
$$-1+t-\frac{2t(n-1)}n+\frac{t^2(n-1)}{n^2}=-\left(1-\frac{t}n\right)^2-t +\frac{t^2}n<0.$$
\end{proof}

\begin{proof}[Proof of Proposition \ref{prop2}]
The proposition follows from Lemmas \ref{lem1} and \ref{lem2}
\end{proof}

In the case $t=0$, we can extend Proposition \ref{prop2} to cover the range $an<k<(a+1)n$. We begin with the following lemma.

\begin{lem}\label{lem3}
Suppose that $t=0$, $k>n$, $k((a+1)n-k)\ge n^2-1$ and $d_1=an_1$. Then \eqref{eq41} and \eqref{eq42} imply \eqref{eq43}.
\end{lem}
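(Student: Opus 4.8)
The plan is to complete the parameter count begun just before the lemma, i.e.\ to deduce \eqref{eq43} from \eqref{eq41}, \eqref{eq42}, $t=0$ and $d_1=an_1$. First I would reduce \eqref{eq43} to a one‑variable inequality, then isolate its worst case, and finally recover a deficit of a single unit in the resulting estimate by an integrality argument.

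Putting $t=0$, $d_1=an_1$ and $\delta:=n-n_1$ (we may assume $n_1<n$, so $\delta\ge1$), the left‑hand side of \eqref{eq43} collapses to $n_1\delta+k_1\bigl(k-k_1-(a+1)\delta\bigr)$, so \eqref{eq43} becomes $g(k_1)>n_1\delta$, where $g(x):=x\bigl(x+(a+1)\delta-k\bigr)$. Since $k((a+1)n-k)\ge n^2-1>0$ we have $n+1\le k\le(a+1)n-1$, and in particular $k>\delta$. A short computation using $k<(a+1)n$ shows that $\tfrac{kn_1}n$ lies at or to the right of the vertex $\tfrac{k-(a+1)\delta}2$ of the parabola $g$; since \eqref{eq41} gives $k_1\ge\tfrac{kn_1}n$, the function $g$ is non‑decreasing for arguments $\ge\tfrac{kn_1}n$, so it suffices to prove $g\bigl(\lceil kn_1/n\rceil\bigr)>n_1\delta$.

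Here I would split according to whether $n\mid kn_1$. If $n\nmid kn_1$ then $\lceil kn_1/n\rceil\ge\tfrac{kn_1+1}n$; evaluating $g$ there, expanding, and using $k((a+1)n-k)\ge n^2-1$ gives
\[
g(k_1)\ \ge\ n_1\delta+\frac{n_1(k-\delta)+\delta\bigl((a+1)n-k\bigr)+1}{n^2}\ >\ n_1\delta,
\]
because $k>\delta$, $\delta\ge1$ and $(a+1)n-k\ge1$. If $n\mid kn_1$ then $\lceil kn_1/n\rceil=kn_1/n\in\ZZ$, and
\[
g\!\left(\tfrac{kn_1}n\right)=\frac{n_1\delta\,k\bigl((a+1)n-k\bigr)}{n^2}
\]
is an integer which, by $k((a+1)n-k)\ge n^2-1$ and $n_1\delta\le(n-1)^2<n^2$, exceeds $n_1\delta-1$ and hence is $\ge n_1\delta$; equality would force $k((a+1)n-k)=n^2$, making $k$ an integer root of $X^2-(a+1)nX+n^2$, so that $n^2\bigl((a+1)^2-4\bigr)$ would be a perfect square — impossible for $a\ge2$, since then $a^2<(a+1)^2-4<(a+1)^2$, and for $a=1$ forcing $k=n$ against $k>n$. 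Hence $g(k_1)\ge g(kn_1/n)>n_1\delta$ in this case too.

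The genuinely delicate point — the step I expect to be the main obstacle — is exactly this last deficit of one unit: the naive chain of inequalities only yields the strict bound $k((a+1)n-k)>n^2$, whereas the hypothesis is merely $k((a+1)n-k)\ge n^2-1$. When $kn_1/n\notin\ZZ$ the ceiling contributes at least $1/n$, which after clearing denominators is exactly enough; in the residual case $kn_1/n\in\ZZ$ one instead combines the integrality of the value of $g$ with the non‑existence of integer solutions of $k((a+1)n-k)=n^2$ for $k>n$. All the rest is routine rearrangement of the linear and quadratic expressions in $n_1$, $d_1$, $k_1$ and $\delta$.
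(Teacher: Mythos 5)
Your argument is correct; I checked the collapse of \eqref{eq43} to $g(k_1)>n_1\delta$, the vertex/monotonicity estimate, the case-1 algebra, and the case-2 integrality and discriminant steps, and they all hold (like the paper's own proof, you quietly assume $n_1<n$, which is legitimate since the relevant subsystems are proper). The underlying strategy is the same as the paper's — plug the lower bound on $k_1$ from \eqref{eq41} into the quadratic and recover the missing unit by integrality — but you organize the delicate borderline analysis differently. The paper reduces (via Lemma \ref{lem1}) to \eqref{eq45}, rewrites it as the clean inequality \eqref{eq47}, $a+1>\frac kn+\frac nk$, and notes that under $k((a+1)n-k)\ge n^2-1$ this can fail only when $k((a+1)n-k)$ equals $n^2$ (excluded by a prime-factorization argument forcing $k=n$) or $n^2-1$ (handled by observing that $n,k$ are then coprime, so $k_1\ge\frac{n_1k+1}n$, and redoing the computation). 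You instead split on whether $n\mid n_1k$: when it does not, the ceiling bound $k_1\ge\frac{n_1k+1}n$ is available unconditionally and yields the strict inequality directly, with no coprimality discussion; when it does, you use integrality of the value $g(kn_1/n)$ together with the non-solvability of $k((a+1)n-k)=n^2$ for $k>n$, proved by a discriminant argument rather than by comparing prime factorizations, and your parabola-monotonicity observation replaces the appeal to Lemma \ref{lem1}. The paper's route buys the conceptual reformulation \eqref{eq47}, which isolates exactly the two extreme numerical cases; yours buys a slightly more self-contained argument in which the integrality refinement is applied uniformly and the coprimality step disappears.
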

\begin{proof}
Since the hypotheses of Lemma \ref{lem1} hold, it is sufficient to prove \eqref{eq45}, which now takes the form

$$
nn_1-n_1^2+\frac{n_1k}n\left((a+1)(n_1-n)+k-\frac{n_1k}n\right)<0.
$$
Rearranging this, we obtain 
\begin{equation}\label{eq47}
a+1>\frac{k}n+\frac{n}k.
\end{equation}

Recall now the hypothesis $k((a+1)n-k)\ge n^2-1$, which is equivalent to
$$a+1\ge \frac{k}n+\frac{n}k-\frac1{nk}.$$
It follows that the only circumstances under which \eqref{eq47} fails are
$$a+1=\frac{k}n+\frac{n}k\ \ \mbox{and}\ \ a+1=\frac{k}n+\frac{n}k-\frac1{nk}.$$
In the first case, it is easy to see, by considering the prime factors of $n$ and $k$, that $k=n$, which contradicts the assumption that $k>n$. In the second case, it is easy to see that $n$ and $k$ are coprime. Since $n_1<n$, it follows from \eqref{eq41} that
$$k_1\ge\frac{n_1k+1}n.$$
Using this inequality in place of \eqref{eq41} and substituting again in \eqref{eq43}, it is now sufficient to prove that 
$$nn_1-n_1^2+\frac{n_1k+1}n\left((a+1)(n_1-n)+k-\frac{n_1k+1}n\right)<0.$$
Equivalently
$$a+1>\frac{k}n-\frac1{n(n-n_1)}+\frac{nn_1}{n_1k+1}.$$
On substituting $a+1=\frac{k}n+\frac{n}k-\frac1{nk}$, this becomes
$$\frac{n}k+\frac1{n(n-n_1)}>\frac{nn_1}{n_1k+1}+\frac1{nk},$$
which is obviously true.
\end{proof}

\begin{prop}\label{prop3}
Suppose that $n\ge2$, $a\ge2$ and $E={\mathcal O}(a)^n$. If $k((a+1)n-k)\ge n^2-1$ and $k>an$, then there exists a subspace $V$ of $H^0(E)$ such that $(E,V)$ admits no proper subsystem $(E_1,V_1)$ of type $(n_1,d_1,k_1)$ with 
$\frac{k_1}{n_1}\ge\frac{k}n$.
\end{prop}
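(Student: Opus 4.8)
The plan is to run the dimension count of the proof of Proposition~\ref{prop2}: we show that the locus of $V\in\Gr(k,h^0(E))$ that \emph{do} admit a subsystem $(E_1,V_1)$ with $\frac{k_1}{n_1}\ge\frac{k}{n}$ is contained in a proper closed subvariety, so a general $V$ has no such subsystem. The new ingredient is that the hypothesis $k>an$ lets us assume $E_1=\cO(a)^{n_1}$, i.e.\ $d_1=an_1$; this is exactly the situation covered by Lemma~\ref{lem3}.

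\emph{Step 1: reduction to $d_1=an_1$.} A subbundle of $E=\cO(a)^n$ splits as $\bigoplus_i\cO(b_i)$ with every $b_i\le a$, and $d_1=an_1$ exactly when all $b_i=a$. Suppose $(E_1,V_1)$ satisfies $\frac{k_1}{n_1}\ge\frac{k}{n}$ and some summand has $b_i\le a-1$ (so $n_1\ge2$). Delete that summand, intersect $V_1$ with the sections of the remaining bundle, and saturate inside $E$: this yields a subsystem of rank $n_1-1$ with at least $k_1-(b_i+1)$ sections. Since $\frac{k_1}{n_1}\ge\frac{k}{n}>a\ge b_i+1$, one has $k_1>n_1(b_i+1)$, hence $\frac{k_1-(b_i+1)}{n_1-1}\ge\frac{k_1}{n_1}\ge\frac{k}{n}$, so the smaller subsystem still violates the bound. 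The process cannot end with a rank-one subsystem of degree $<a$ (that would force $k_1\le a<\frac{k}{n}\le k_1$), so iterating we reach a subsystem with underlying bundle $\cO(a)^{n_1}$ and $1\le n_1\le n-1$. Hence the set of offending $V$ is the finite union, over pairs $(n_1,k_1)$ with $1\le n_1\le n-1$ and $\lceil n_1k/n\rceil\le k_1\le(a+1)n_1$, of the loci of $V$ admitting a subsystem of type $(n_1,an_1,k_1)$.

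\emph{Step 2: dimension count.} Fix such a type. As in the paragraph before Lemma~\ref{lem1} (now with $t=0$ and $d_1=an_1$), the corresponding locus of $V$ has dimension at most the sum of the parameter counts for the embedding $\cO(a)^{n_1}\hookrightarrow E$ (at most $\dim\Hom-\dim\Aut=n_1(n-n_1)$), for $V_1\in\Gr(k_1,(a+1)n_1)$, and for $V\supset V_1$ in $\Gr(k,h^0(E))$; this sum is strictly less than $\dim\Gr(k,h^0(E))=k\big((a+1)n-k\big)$ exactly when \eqref{eq43} holds. But $n\ge2$ and $k\big((a+1)n-k\big)\ge n^2-1$ force $k<(a+1)n$, so Lemma~\ref{lem3} applies and yields \eqref{eq43}. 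Since only finitely many pairs $(n_1,k_1)$ occur, the union of these loci lies in a proper closed subvariety of $\Gr(k,h^0(E))$, and any $V$ outside it works.

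The one new point, relative to Proposition~\ref{prop2}, is Step~1; it is exactly where the strict inequality $k>an$ is used, and it is what lets us invoke Lemma~\ref{lem3} rather than prove a version of Lemma~\ref{lem2} valid for $an<k<(a+1)n$. I expect Step~1, together with the routine check that Lemma~\ref{lem3} covers every resulting pair $(n_1,k_1)$, to be the only real content. A direct attack---establishing \eqref{eq45} for general $d_1$ with $an<k<(a+1)n$---would instead run into arithmetic edge cases (e.g.\ $n\mid n_1k$ with $d_1+n_1=n_1k/n$) where the naive count gives only a non-strict inequality; the reduction of Step~1 removes these.
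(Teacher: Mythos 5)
Your proposal is correct and follows essentially the same route as the paper: the paper also reduces to the case $d_1=an_1$ handled by Lemma~\ref{lem3}, by stripping off a summand $\cO(b)$ with $b<a$ and using $\frac{k}{n}>a$ to see that the smaller subsystem still satisfies \eqref{eq41} (the paper organizes this as an induction on $n_1$, with the base case $n_1=1$ forcing $d_1=a$, $k_1=a+1$, rather than as your iterated deletion). Your Step~2 dimension count is exactly the framework already set up in the proof of Proposition~\ref{prop2}, so there is no substantive difference.
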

\begin{proof}
Suppose first that $n_1=1$. Then, by \eqref{eq41}, we have $k_1\ge\frac{k}n>a$. Hence, by \eqref{eq42}, we have $k_1=a+1$ and $d_1=a$. The result now follows from Lemma \ref{lem3}.

We now proceed by induction on $n_1$, assuming that $n_1\ge2$ and the result is proved for rank $n_1-1$. If $d_1=an_1$, the result follows from Lemma \ref{lem3}. Otherwise, we can write $E_1={\mathcal O}(b)\oplus E_1'$ with $b<a$ and put $V_1':=V_1\cap H^0(E_1')$. Then $h^0(E_1')=h^0(E_1)-(b+1)$ and so, using \eqref{eq41},
$$k_1':=\dim V_1'\ge k_1-(b+1)\ge k_1-a\ge \frac{kn_1}n-a=\frac{k(n_1-1)}n+\frac{k}n-a>\frac{k(n_1-1)}n.$$
Now apply the inductive hypothesis to $(E_1',V_1')$.
\end{proof}

\begin{proof}[Proof of Theorem \ref{t1}]
The only point that remains to be proved is that, if $k((a+1)n-k)\ge n^2-1$, then $U^s$ is non-empty. This follows from Propositions \ref{prop02}, \ref{prop1}, \ref{prop2} and \ref{prop3} and the fact that $E={\mathcal O}(a)^{n}$ is semistable.
\end{proof}

\begin{rem}\label{r1}\begin{em}
When $t=0$, the proof of Lemma \ref{lem2} and hence also that of Proposition \ref{prop2} fail when $k=n$. In fact $G(\alpha;n,an,n)$ is non-empty for all $\alpha>0$ if $a\ge2$, but $G(\alpha;n,n,n)=\emptyset$ for all $\alpha$ (see Proposition \ref{prop08}).
\end{em}\end{rem}

In order to obtain further information when $t\ge2$, suppose first that $k=an-1$. Then \eqref{eq45} becomes
\begin{equation}\label{eq48}
n((a+1)n_1-d_1)-tn_1-n_1^2+\left(an_1-\frac{n_1}n\right)\left(d_1-(a-1)n_1-n+t-1+\frac{n_1}n\right)<0.
\end{equation}
For $d_1=n_1$, we have, after dividing by $n_1$, expanding and collecting terms,
\[\left(a-1-\frac1n\right)\left(t-(a-1)n_1-1+\frac{n_1}n\right)<0.\]
Since $a-1-\frac1n>0$, this simplifies to $t<(a-1)n_1+1-\frac{n_1}n$ or, equivalently,
\begin{equation}\label{eq49}
t\le(a-1)n_1.
\end{equation}
For $d_1=(a-1)n_1+n-t$, we have $t<n-n_1+\frac{an_1}n-\frac{n_1}{n^2}$ or, equivalently
\begin{equation}\label{eq50}
t\le n-n_1 +\frac{an_1-1}n.
\end{equation}

\begin{lem}\label{lem4}
Suppose that $1\le t\le n-1$, $1\le n_1\le n-1$, $n< k\le an-1$ and \eqref{eq41} and \eqref{eq42} hold. Suppose further that \eqref{eq49} and \eqref{eq50} hold. Then \eqref{eq45} holds for all allowable values of $d_1$.
\end{lem}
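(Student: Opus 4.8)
The plan is to follow the template of the proof of Lemma \ref{lem2}. For fixed $n_1$ and $d_1$, the left-hand side of \eqref{eq45} is a quadratic in $k$ with leading coefficient $\frac{n_1(n-n_1)}{n^2}>0$ (recall $1\le n_1\le n-1$), hence convex in $k$; so it is enough to verify a non-strict inequality in \eqref{eq45} at $k=n$ and a strict one at $k=an-1$, convexity then giving strict inequality for all $n<k\le an-1$. At $k=n$ there is nothing new: the left-hand side of \eqref{eq45} collapses to $(n-n_1)(n_1-d_1)$, exactly as in Lemma \ref{lem2}, and this is $\le 0$ since $n_1\le d_1$ by \eqref{eq42}.

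The substance is the endpoint $k=an-1$, where the left-hand side of \eqref{eq45} is precisely the left-hand side of \eqref{eq48} and is \emph{linear} in $d_1$. By \eqref{eq42} the allowable values of $d_1$ lie in the interval $[\,n_1,\,\min\{an_1,(a-1)n_1+n-t\}\,]$, hence also in $[\,n_1,\,(a-1)n_1+n-t\,]$. The computations immediately preceding the lemma show that \eqref{eq48} holds at $d_1=n_1$ exactly when \eqref{eq49} holds and at $d_1=(a-1)n_1+n-t$ exactly when \eqref{eq50} holds (the passage to the integral conditions \eqref{eq49} and \eqref{eq50} uses that $t$ and $(a-1)n_1$ are integers, and also yields the \emph{strict} form of \eqref{eq48}). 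Since \eqref{eq49} and \eqref{eq50} are assumed and a linear function negative at both endpoints of an interval is negative on the whole interval, \eqref{eq48} holds throughout $[\,n_1,\,(a-1)n_1+n-t\,]$, in particular for every allowable $d_1$.

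I do not expect a real obstacle. The only point needing care is that $d_1=(a-1)n_1+n-t$ itself need not be an allowable value: when $n_1<n-t$ the upper bound for $d_1$ drops to $an_1$. This is harmless, since the allowable interval for $d_1$ is in every case contained in $[\,n_1,\,(a-1)n_1+n-t\,]$, so the endpoint estimates \eqref{eq49} and \eqref{eq50} together with linearity in $d_1$ still cover it.
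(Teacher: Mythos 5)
Your proposal is correct and follows essentially the same route as the paper: reduce via the positive leading coefficient of the quadratic in $k$ to the endpoints $k=n$ (already handled in Lemma \ref{lem2}) and $k=an-1$, then use linearity in $d_1$ on the interval $[n_1,(a-1)n_1+n-t]$, where the endpoint inequalities are exactly \eqref{eq49} and \eqref{eq50} by the computations preceding the lemma. Your extra remark that the true upper bound for $d_1$ may be $an_1<(a-1)n_1+n-t$ is a harmless refinement already implicit in the paper's argument, which likewise verifies \eqref{eq48} on the whole larger interval.
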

\begin{proof}
Following the proof of Lemma \ref{lem2}, it is sufficient to prove \eqref{eq45} for $k=an-1$ and $n_1\le d_1\le (a-1)n_1+n-t)$, in other words to prove \eqref{eq48} for all such $d_1$. But this is true if and only if \eqref{eq49} and \eqref{eq50} hold.
\end{proof}

\begin{prop}\label{prop4}
Suppose that $n\ge2$, $a\ge2$ and $E={\mathcal O}(a)^{n-t}\oplus{\mathcal O}(a-1)^t$ with $2\le t\le n-1$ and $a\ge t$. Suppose further that $n< k\le an-1$.
Then there exists a subspace $V$ of $H^0(E)$ such that $(E,V)$ admits no subsystem  $(E_1,V_1)$ of type $(n_1,d_1,k_1)$ with $\frac{k_1}{n_1}\ge\frac{k}n$.
\end{prop}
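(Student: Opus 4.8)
The plan is to reuse the parameter count set up before and in the proof of Proposition~\ref{prop2}: it suffices to show that, for every type $(n_1,d_1,k_1)$ subject to \eqref{eq41} and \eqref{eq42}, inequality \eqref{eq43} holds, and then a general $V\in\Gr(k,h^0(E))$ will have the required property. Since $t\ge2>0$ and $k\le an-1<an$, Lemma~\ref{lem1} applies and yields \eqref{eq44}; then, as in the reduction of \eqref{eq43} to \eqref{eq45} following Lemma~\ref{lem1} (replacing the integer $k_1$ by $n_1k/n$), it is enough to verify \eqref{eq45} for all admissible $d_1$, and by Lemma~\ref{lem4} this reduces to checking \eqref{eq49} and \eqref{eq50}. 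The substitution $k_1\to n_1k/n$ is too lossy only when $n_1k/n$ is far from $k_1$, which forces $n_1=1$, so I would split into the cases $n_1\ge2$ and $n_1=1$.

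For $n_1\ge2$ I would verify \eqref{eq49} and \eqref{eq50} outright. Inequality \eqref{eq49} follows from $t\le a\le2(a-1)\le(a-1)n_1$, using $a\ge2$ and $n_1\ge2$. For \eqref{eq50}, the right-hand side $n-n_1+\frac{an_1-1}{n}$ is affine in $n_1$ with slope $\frac{a-n}{n}$, so over $n_1\in[2,n-1]$ its minimum is attained at $n_1=2$ if $a\ge n$ and at $n_1=n-1$ if $a\le n-1$; in the first case the minimum exceeds $n-1\ge t$, and in the second it equals $1+a-\frac{a+1}{n}\ge a\ge t$ since $a+1\le n$. Hence Lemma~\ref{lem4} gives \eqref{eq45} for all admissible $d_1$, and \eqref{eq44} then gives \eqref{eq43}.

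For $n_1=1$ one has $k_1\ge\lceil k/n\rceil\ge2$ (as $k>n$) and, by \eqref{eq42}, $1\le d_1\le a$ with $k_1\le d_1+1$. If $a\ge t+1$, then \eqref{eq49} ($t\le a-1$) and \eqref{eq50} ($t\le n-1+\frac{a-1}{n}$) both hold and Lemma~\ref{lem4} finishes as above. If $a=t$ (so $n\ge t+1\ge3$ and $a=t\le n-1$), then \eqref{eq49} fails and I would argue directly. When $d_1=1$ we have $k_1=2$, \eqref{eq41} forces $k\le2n$, and \eqref{eq43} becomes $2k<(a+2)n-t+1$, which follows from $k\le2n$ together with $t\le n-1$ if $a\ge3$, and from $k\le an-1=2n-1$ if $a=2$ (where $t=2$). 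When $d_1\ge2$, the left-hand side of \eqref{eq48} is affine and strictly decreasing in $d_1$ (its $d_1$-coefficient is $a-n-\frac1n<0$ since $a\le n-1$), so it suffices to verify \eqref{eq48} at $d_1=2$, where it reduces to $an(n+1)<n^3+2n+1$ and hence holds because $a=t\le n-1$; since the left-hand side of \eqref{eq45} equals $(n-1)(1-d_1)\le0$ at $k=n$, the quadratic-in-$k$ argument of Lemma~\ref{lem4} then delivers \eqref{eq45}, hence \eqref{eq43}, for all $n<k\le an-1$. Combining the two cases establishes \eqref{eq43} for every relevant type and proves the proposition.

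The step I expect to be the main obstacle is the case $n_1=1$, $a=t$: there Lemma~\ref{lem4} does not apply off the shelf because \eqref{eq49} fails, so one must treat the line subbundle $\mathcal{O}(1)\subset E$ on its own — exploiting that \eqref{eq41} then forces $k\le2n$ — and, for $d_1\ge2$, fall back on monotonicity in $d_1$. The remaining verifications are routine checks of affine and quadratic inequalities in the style of Lemma~\ref{lem2}.
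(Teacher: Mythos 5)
Your proposal is correct and takes essentially the same route as the paper: the same parameter count reducing to \eqref{eq43} via \eqref{eq44} and \eqref{eq45}, verification of \eqref{eq49} and \eqref{eq50} so that Lemma \ref{lem4} applies except when $a=t$ and $n_1=1$, and a direct treatment of that exceptional case. The only (harmless) difference is in the exceptional case: the paper first restricts to $an-t<k\le an-1$ via Lemma \ref{lem2}, hence to $d_1\in\{a-1,a\}$, leaving a single direct check at $a=t=2$, $d_1=n_1=1$, whereas you split on $d_1=1$ versus $d_1\ge2$ and use monotonicity of \eqref{eq48} in $d_1$; both verifications are sound.
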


\begin{proof}
In view of Lemma \ref{lem4}, it is sufficient to check \eqref{eq49} and \eqref{eq50} for $1\le n_1\le n-1$. For $a\ge t$, using that $t\le n-1$,
 it is easy to see that \eqref{eq50} is always true. On the other hand, \eqref{eq49} is true if $a>t$; it is also true if $a=t\ge2$ and $n_1\ge2$. It remains to consider the case $a=t\ge2$, $n_1=1$.

In this case, in view of Lemma \ref{lem2} and \eqref{eq42}, we can restrict to the range 
$$an-t<k\le an-1,\ \ a-1\le d_1\le a$$
and we need to verify \eqref{eq48}, which now becomes
$$n(a+1-d_1)-a-1+\left(a-\frac1n\right)\left(d_1-n+\frac1n\right)<0.$$
For $d_1=a$, this becomes 
$$(a-1)(a-n)-\frac1{n^2}<0,$$
which is true since $2\le a\le n-1$.
For $d_1=a-1$, we require
$$(a-2)(a-n)+\frac1n-\frac1{n^2}<0,$$
which holds for $3\le a\le n-1$. We are left with the case $a=t=2$, $d_1=n_1=1$.

From \eqref{eq41} and \eqref{eq42}, we now have $k_1=2$ 
and we can check directly that \eqref{eq43} holds.
This completes the proof.
\end{proof}

\begin{proof}[Proof of Theorem \ref{t2}]
The necessary conditions for non-emptiness of $G(\alpha;n,d,k)$ stated in the theorem follow at once from Proposition \ref{prop1}. If $k=n$ and $k<an$, we have $an-t>n$, so $G(\alpha;n,d,k)\ne\emptyset$ for large $\alpha$ by Proposition \ref{prop08}. If $n<k\le an-t$, then $G(\alpha;n,d,k)\ne\emptyset$ for large $\alpha$ by Propositions \ref{prop02} and \ref{prop2}. Finally, if $n<k\le an-1$, $a\ge t$, the same holds by Propositions \ref{prop02} and \ref{prop4}.
\end{proof}

\section{A non-existence result}\label{non}

In this section, we prove Theorem \ref{t3}. The first proposition shows in particular that the necessary conditions of Theorem \ref{t2} are not sufficient for the existence of $\alpha$-stable coherent systems (see Corollary \ref{c6}).

\begin{prop}\label{prop5}
Suppose that $a\ge2$, $1\le t\le n-1$, $k\ge at$ and 
\begin{equation}\label{eq54}
(a-1)t> a(an-k)+(a-2)n.
\end{equation} Then $G(\alpha;n,an-t,k)=\emptyset$ for all $\alpha>0$.
\end{prop}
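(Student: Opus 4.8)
The plan is to assume $G(\alpha;n,an-t,k)\neq\emptyset$ for some $\alpha>0$ and derive a contradiction by exhibiting a destabilising subsystem of the generic coherent system. By Theorem \ref{t01} and Proposition \ref{prop03}, a general $(E,V)\in G(\alpha;n,an-t,k)$ has $E\cong\cO(a)^{n-t}\oplus\cO(a-1)^t$ of generic splitting type, $V$ generic in $\Gr(k,H^0(E))$, and fits in $0\to H\to V\otimes\cO\to E\to 0$. The natural candidate for a destabilising subsystem is built from the subbundle $E':=\cO(a)^{n-t}$: set $V':=V\cap H^0(E')$. Since $h^0(E)=(a+1)(n-t)+at=(a+1)n-t$ and $h^0(E')=(a+1)(n-t)$, for generic $V$ we get $\dim V'=\max\{k-at,\,0\}$; the hypothesis $k\ge at$ gives $\dim V'\ge k-at$, and I would argue (using genericity of $V$ in the Grassmannian) that equality holds, so $(E',V')$ has type $(n-t,a(n-t),k-at)$ — but this subsystem is $\alpha$-destabilising only when $\alpha$ is \emph{small}, namely when $\alpha(an-k)<n-t$, i.e.\ roughly $\alpha<\frac{n-t}{an-k}$, which is consistent with Proposition \ref{prop1} and does \emph{not} by itself give non-emptiness for \emph{all} $\alpha$. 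So this first subsystem only handles small $\alpha$.

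To kill \emph{large} $\alpha$ as well, the key step is to produce a subsystem $(E_1,V_1)$ with $\frac{k_1}{n_1}>\frac kn$, since by Proposition \ref{prop02} such a subsystem destabilises $(E,V)$ for all $\alpha>d(n-1)$, and then by Theorem \ref{t01}(ii) the set of $\alpha$ with $G(\alpha;n,d,k)\neq\emptyset$ is an interval, so it suffices to rule out both ends. Here is where the hypothesis \eqref{eq54} enters. The idea is to take $E_1=E'=\cO(a)^{n-t}$ again but now with the \emph{full} induced space of sections $V_1:=V\cap H^0(E')$ of dimension $k-at$: we would need $\frac{k-at}{n-t}>\frac kn$, which rearranges to $n(k-at)>k(n-t)$, i.e.\ $kt>ant$, i.e.\ $k>an$ — false. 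So that choice fails, and instead I would look for a different subbundle. The right move is to consider subbundles of the form $\cO(a)^{n-t}\oplus\cO(a-1)^s$ for suitable $s$, or more cleverly to push sections of $E'$ forward: take $E_1\subset E$ of rank $n_1$ and degree $d_1$ maximising $h^0$, combined with a generic $k_1$-dimensional subspace of $V$ meeting $H^0(E_1)$ as much as genericity allows, namely $k_1\ge k-(h^0(E)-h^0(E_1))=k-\big((a+1)n-t-h^0(E_1)\big)$. Choosing $E_1$ to be a sum of $\cO(a)$'s and $\cO(a-1)$'s of the appropriate ranks and optimising the inequality $\frac{k_1}{n_1}>\frac kn$ subject to $k_1\le h^0(E_1)$ should produce, precisely under \eqref{eq54}, a rank and degree for which the required strict inequality holds; the algebra converting "there exists such $(n_1,d_1)$" into the stated inequality $(a-1)t>a(an-k)+(a-2)n$ is the computational heart of the argument.

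The main obstacle I anticipate is twofold. First, justifying that the generic $V\in\Gr(k,H^0(E))$ really does meet $H^0(E_1)$ in exactly the minimal-possible dimension — this is a transversality/genericity statement that needs the openness of the condition "$V$ meets $H^0(E_1)$ in dimension $\le r$" and the irreducibility from Theorem \ref{t01}, together with the fact that for the \emph{specific} $E_1$ we choose, $H^0(E_1)$ sits inside $H^0(E)$ as a coordinate subspace once $E$ is written in its canonical split form. Second, and more delicate: a priori $E_1$ need not be a direct factor of $E$, only a subbundle, so I must argue that it is enough to test direct-factor subbundles — equivalently, that for the purpose of destabilisation one may always replace an arbitrary subbundle by one of the form $\bigoplus\cO(a_i)$ with the $a_i$ as large as possible and absorb any low-degree factors, exactly as in the reduction step preceding \eqref{eq42} in the proof of Proposition \ref{prop2}. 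Once both points are in place, the contradiction with $\alpha$-(semi)stability for all $\alpha>0$ follows by combining the small-$\alpha$ obstruction from the first subsystem with the large-$\alpha$ obstruction from Proposition \ref{prop02} and invoking the interval property of Theorem \ref{t01}(ii); if instead the single subsystem with $\frac{k_1}{n_1}>\frac kn$ can be found directly, then Proposition \ref{prop02} alone suffices and the interval property closes the gap.
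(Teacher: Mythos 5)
Your strategy --- exhibiting a destabilising subsystem inside the general $(E,V)$ --- is not the paper's route, and as set out it has a genuine gap at its core. The ``computational heart'' you defer is not merely missing but unworkable: genericity of $V$ (Theorem \ref{t01}(i)) forces $\dim(V\cap H^0(E_1))$ to be \emph{minimal} for any fixed subbundle $E_1$, and with these minimal values no proper subbundle ever reaches $\frac{k_1}{n_1}\ge\frac{k}{n}$. For instance, in the case of Corollary \ref{c6} ($a=2$, $t=3$, $k=2n-1$), take $E_1=\cO(2)^p\oplus\cO(1)^q$ (these maximise $h^0$ among subbundles of given rank); the minimal intersection has $k_1=\max\{0,3p+2q-n+2\}$, and $\frac{k_1}{p+q}\ge\frac{2n-1}{n}$ reduces to $p(n+1)+q\ge n(n-2)$, which holds only for $(p,q)=(n-3,3)$, i.e.\ $E_1=E$. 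So the hypothesis \eqref{eq54} cannot enter through such an intersection count. In the paper it enters quite differently: since $k\ge at$ and $V$ is generic, $V$ surjects onto $H^0(\cO(a-1)^t)$, giving the canonical exact sequence \eqref{eq53} with kernel $(\cO(a)^{n-t},W)$, $\dim W=k-at$; this extension is classified by a $t$-tuple of classes in $\Ext^1\bigl((\cO(a-1),H^0(\cO(a-1))),(\cO(a)^{n-t},W)\bigr)$, and these must be linearly independent, for otherwise a copy of $(\cO(a-1),H^0(\cO(a-1)))$ splits off as a direct summand, and a direct summand contradicts $\alpha$-stability for \emph{every} $\alpha$ at once. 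Using $\Hom=0$, $\Ext^2=0$ (Lemma \ref{l01}, \eqref{eq04}) and \eqref{eq02}--\eqref{eq03}, that Ext space has dimension $(a-2)(n-t)+a(an-k)$, which \eqref{eq54} makes $<t$ --- contradiction. The obstruction is extension-theoretic and invisible to the numerical counts you propose.

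There is also a flaw in your logical frame even granting the missing step. A subsystem with $\frac{k_1}{n_1}>\frac{k}{n}$ contradicts semistability only for $\alpha>d(n-1)$ (Proposition \ref{prop02}), and your first subsystem only excludes $\alpha\le\frac{n-t}{an-k}$; Theorem \ref{t01}(ii) says $\{\alpha:G(\alpha;n,d,k)\ne\emptyset\}$ is an interval, but an interval avoiding both extremes can perfectly well be a bounded interval in the middle --- Proposition \ref{prop07}, where $G(\alpha;6,7,4)\ne\emptyset$ exactly for $\frac54<\alpha<2$, is precisely such an example, so ``ruling out both ends'' does not suffice. Moreover the general element of $G(\alpha;n,d,k)$ varies with $\alpha$, so end-of-range arguments say nothing about intermediate critical values. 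The paper's proof avoids both difficulties because the forced direct summand destabilises at the very same $\alpha$ for which $(E,V)$ was assumed stable, uniformly for all $\alpha>0$.
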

\begin{proof}Suppose that $G(\alpha;n,an-t,k)\ne\emptyset$ and let $(E,V)$ be a general element of $G(\alpha;n,an-t,k)$. Then $E\cong{\mathcal O}(a)^{n-t}\oplus{\mathcal O(a-1)}^t$. Moreover, by Theorem \ref{t01}(i), we can take $V$ to be a general subspace of $H^0(E)$, so that the homomorphism $V\to H^0({\mathcal O}(a-1)^t)$ is surjective.
It follows that there is an exact sequence
\begin{equation}\label{eq53}
0\lra ({\mathcal O}(a)^{n-t},W)\lra(E,V)\lra({\mathcal O}(a-1), H^0({\mathcal O}(a-1)))^t\lra0
\end{equation}
with $\dim W=k-at$. Sequences \eqref{eq53} are classified by a $t$-tuple of elements in 
$$\Ext^1(({\mathcal O}(a-1), H^0({\mathcal O}(a-1)),({\mathcal O}(a)^{n-t},W)).$$
For $(E,V)$ to be $\alpha$-stable, these elements must be linearly independent. On the other hand, since $(E,V)$ is $\alpha$-stable, we have 
$$\Hom(({\mathcal O}(a-1), H^0({\mathcal O}(a-1))),({\mathcal O}(a)^{n-t},W))=0.$$ 
Moreover, by \eqref{eq04}, 
$$\Ext^2(({\mathcal O}(a-1), H^0({\mathcal O}(a-1))),({\mathcal O}(a)^{n-t},W))=0.$$ 
Hence, by \eqref{eq02} and \eqref{eq03},
\begin{multline}\nonumber\dim(\Ext^1(({\mathcal O}(a-1), H^0({\mathcal O}(a-1))),({\mathcal O}(a)^{n-t},W)))\\=-(n-t)+(a-1)(n-t)-a(n-t)+a(a(n-t)+n-t-k+at)\\=(a-2)(n-t)+a(an-k)< t.\end{multline}
This is a contradiction, which proves the proposition.
\end{proof}

\begin{cor}\label{c6} Suppose that $n\ge4$. Then $G(\alpha;n,2n-3,2n-1)=\emptyset$.
\end{cor}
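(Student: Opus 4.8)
The plan is to apply Proposition \ref{prop5} with a specific choice of parameters. We want the triple $(n,d,k)$ to be of the form $(n, an-t, k)$ with the hypotheses $a\ge 2$, $1\le t\le n-1$, $k\ge at$ and the strict inequality \eqref{eq54} all satisfied. Writing $2n-3 = an - t$, the natural choice is $a=2$ and $t=3$; then $d=2n-3$ and we need $t=3\le n-1$, i.e. $n\ge 4$, which is exactly the hypothesis of the corollary. With $k=2n-1$, the condition $k\ge at$ reads $2n-1\ge 6$, i.e. $n\ge 4$ (in fact $n\ge\tfrac72$), so it holds. Thus the only substantive point is to verify the strict inequality \eqref{eq54}.

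The key step is therefore the computation: with $a=2$, $t=3$, $k=2n-1$, the left-hand side of \eqref{eq54} is $(a-1)t = 3$, while the right-hand side is
\[
a(an-k)+(a-2)n = 2\bigl(2n-(2n-1)\bigr) + 0 = 2.
\]
Since $3 > 2$, inequality \eqref{eq54} holds. All hypotheses of Proposition \ref{prop5} are satisfied, so $G(\alpha;n,2n-3,2n-1)=\emptyset$ for all $\alpha>0$.

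There is essentially no obstacle here; the corollary is a direct specialization of Proposition \ref{prop5}, and the only thing to be careful about is to record that $n\ge 4$ is precisely what makes both $t=3\le n-1$ and $k=2n-1\ge at=6$ hold (the latter with room to spare). One should also note in passing that $d=2n-3\ge 5>0$ and $an-t=2n-3$ is consistent with $0\le t<n$, so the triple is a legitimate one to which the earlier structural results apply. Hence the proof is just the substitution above together with a citation of Proposition \ref{prop5}.

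\begin{proof}[Proof of Corollary \ref{c6}]
Apply Proposition \ref{prop5} with $a=2$, $t=3$ and $k=2n-1$. Since $n\ge4$, we have $1\le t=3\le n-1$ and $k=2n-1\ge 7\ge at=6$. Moreover
\[
(a-1)t = 3 > 2 = 2\bigl(2n-(2n-1)\bigr) = a(an-k)+(a-2)n,
\]
so \eqref{eq54} holds. Proposition \ref{prop5} now gives $G(\alpha;n,2n-3,2n-1)=\emptyset$ for all $\alpha>0$.
\end{proof}
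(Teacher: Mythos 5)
Your proof is correct and matches the paper's own argument, which is simply to take $a=2$, $t=3$, $k=2n-1$ in Proposition \ref{prop5}; your verification of \eqref{eq54} and of the hypotheses $t\le n-1$, $k\ge at$ under $n\ge4$ is exactly what is needed.
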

\begin{proof} Take $a=2$, $t=3$, $k=2n-1$ in the proposition.
\end{proof}

\begin{prop}\label{prop9} Suppose that $a\ge2$, $1\le t\le n-1$, $k\le at$ and $t>(a+1)k-n$. Then $G(\alpha;n,an-t,k)=\emptyset$ for all $\alpha>0$.
\end{prop}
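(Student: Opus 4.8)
The plan is to run the same kind of extension argument as in the proof of Proposition \ref{prop5}, but with the roles of the two summands exchanged. Suppose $G(\alpha;n,an-t,k)\neq\emptyset$ and let $(E,V)$ be a general element. By Theorem \ref{t01}(i), $E\cong\cO(a)^{n-t}\oplus\cO(a-1)^t$ and $V$ is a general subspace of $H^0(E)$. Since $k\le at\le h^0(\cO(a)^{n-t})=(a+1)(n-t)$ would be the wrong inequality to use here; instead I would use that, for a general $V$ of dimension $k\le at$, the projection $V\to H^0(\cO(a-1)^t)$ need not be anything special, but the projection onto a suitable quotient will be. The cleanest choice is to exhibit a quotient coherent system of the form $(\cO(a-1),H^0(\cO(a-1)))$ or, iterating, a destabilising \emph{sub}system: since $V$ is general and $k\le at$, a general $k$-dimensional subspace of $H^0(E)$ will, after projecting away from $H^0(\cO(a)^{n-t})$, land inside a single copy $\cO(a-1)$ — more precisely, I expect to produce a subsystem $(\cO(a-1)^s, V')$ with $V'\subset H^0(\cO(a-1)^s)$ of dimension large relative to $s$, or directly a destabilising subsystem that violates $\alpha$-stability. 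Let me instead mirror Proposition \ref{prop5} exactly: build the extension
\[
0\lra(\cO(a-1)^t,W)\lra(E,V)\lra(\cO(a),0)^{n-t}\oplus(\text{something})\lra 0
\]
is not quite right either; the right dual statement is that there is an exact sequence
\[
0\lra(\cO(a)^{n-t},0)\lra(E,V)\lra(\cO(a-1)^t,\overline V)\lra0
\]
with $\dim\overline V=k$ (using $k\le at$ and genericity of $V$ to ensure $V\cap H^0(\cO(a)^{n-t})=0$, i.e. $V$ injects into $H^0(\cO(a-1)^t)$), and then the subsystem $(\cO(a)^{n-t},0)$ is the destabilising one only if $a\ge\mu_\alpha(E,V)$, which need not hold. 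So the genuinely correct move is: the quotient $(\cO(a-1)^t,\overline V)$ must itself be $\alpha$-stable (as a quotient of an $\alpha$-stable system with $V$ mapping isomorphically onto $\overline V$, one checks semistability of the quotient), and we count the extension group.

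The key computation will be the dimension of $\Ext^1((\cO(a-1)^t,\overline V),(\cO(a)^{n-t},0))$ via \eqref{eq02}--\eqref{eq04}: here $\Hom=0$ by $\alpha$-stability, $\Ext^2=0$ by Lemma \ref{l01} (both summands are positive since $a-1\ge1$), so the dimension equals $C_{21}$ with $(n_1,d_1,k_1)=(n-t,a(n-t),0)$ and $(n_2,d_2,k_2)=(t,(a-1)t,k)$, giving
\[
C_{21}=-(n-t)t + (a-1)t(n-t) - a(n-t)t + k\bigl(a(n-t)+(n-t)-0\bigr),
\]
wait — I must be careful with which system is $(E_1,V_1)$; taking $(E_1,V_1)=(\cO(a)^{n-t},0)$ and $(E_2,V_2)=(\cO(a-1)^t,\overline V)$, formula \eqref{eq03} yields
\[
C_{21}=-(n-t)t+(a-1)t(n-t)-a(n-t)t+k\bigl((a(n-t))-0+0\bigr)
\]
— the bookkeeping needs care, but after simplification I expect the upper bound for this $\Ext^1$ to come out strictly less than $t$ precisely under the hypothesis $t>(a+1)k-n$, contradicting that the $t$-fold extension class $(E,V)$ requires $t$ linearly independent elements (each copy of $\cO(a-1)$ in the quotient, or rather $t$ independent extension classes, to glue the $t$ summands). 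This mirrors the inequality ``$(a-2)(n-t)+a(an-k)<t$'' in Proposition \ref{prop5}.

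The main obstacle, as in Proposition \ref{prop5}, is getting the correct structural exact sequence and verifying that the relevant $\Hom$ group vanishes so that \eqref{eq02} gives an \emph{equality} rather than just an inequality; in particular I must confirm that for general $V$ with $k\le at$ the natural map $V\to H^0(\cO(a-1)^t)$ is injective (equivalently $V\cap H^0(\cO(a)^{n-t})=0$) and that the induced quotient system is $\alpha$-stable, so that Lemma \ref{l01} applies to kill $\Ext^2$. Once the sequence and the vanishing are in place, the dimension count is a routine application of \eqref{eq02}--\eqref{eq04} and the final arithmetic inequality $C_{21}<t\iff t>(a+1)k-n$ should fall out by direct simplification. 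I would also double-check the boundary case $k=at$ separately, and note that the hypothesis $k\le at$ is what guarantees $\dim W=at-k\ge0$ in the analogue of \eqref{eq53}, so the construction makes sense.
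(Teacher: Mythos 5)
Your setup is right and matches the paper's: for a general $(E,V)$, genericity of $V$ together with $k\le at$ gives $V\cap H^0(\cO(a)^{n-t})=0$, hence the exact sequence $0\to(\cO(a)^{n-t},0)\to(E,V)\to(\cO(a-1)^t,V')\to0$ with $\dim V'=k$, and the contradiction is indeed obtained by an $\Ext^1$ count with $\Hom$ and $\Ext^2$ vanishing. But the core of the argument is garbled, and in a way that matters. The paper decomposes the \emph{sub}, not the quotient: $(\cO(a)^{n-t},0)$ splits into $n-t$ copies of $(\cO(a),0)$, so the extension is classified by an $(n-t)$-tuple of classes in $\Ext^1((\cO(a-1)^t,V'),(\cO(a),0))$, and these must be linearly independent — if not, one copy of $(\cO(a),0)$ splits off as a direct summand of $(E,V)$, and then the complementary subsystem, which contains all of $V$, destabilizes because $\alpha>\frac{t}{k}$ is forced by Proposition \ref{prop1}. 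This is the mechanism you were missing when you (rightly) worried that $(\cO(a)^{n-t},0)$ itself need not destabilize; it is not repaired by stability of the quotient, which is neither needed nor available. Your proposed version — ``$t$ independent classes, one per copy of $\cO(a-1)$ in the quotient'' — has no basis, because $(\cO(a-1)^t,V')$ with $V'$ general does \emph{not} decompose as a direct sum of coherent systems (that only happens in Proposition \ref{prop5}, where the quotient carries all of $H^0$), so the extension class cannot be read off summand by summand on that side.

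Consequently your target inequality is wrong. The independence requirement forces $\dim\Ext^1((\cO(a-1)^t,V'),(\cO(a),0))\ge n-t$, and the computation (with $(E_1,V_1)=(\cO(a),0)$, a \emph{single} copy, and $(E_2,V_2)=(\cO(a-1)^t,V')$, using $\Hom=0$ from simplicity of the $\alpha$-stable $(E,V)$ and $\Ext^2=0$ from \eqref{eq04}) gives $-t+(a-1)t-at+k(a+1)=(a+1)k-2t$, which is $<n-t$ exactly when $t>(a+1)k-n$; that is the contradiction. Your claimed equivalence ``$C_{21}<t\iff t>(a+1)k-n$'' is false on either reading of $C_{21}$ (into one copy it says $(a+1)k<3t$; into all $n-t$ copies it is $(n-t)((a+1)k-2t)<t$), and you left the computation unfinished, so the decisive step of the proof is absent. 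Also, your appeal to Lemma \ref{l01} for $\Ext^2=0$ needs semistability of $(\cO(a-1)^t,V')$, which you have not established; use \eqref{eq04} directly instead, noting that the kernel $N_2$ of $V'\otimes\cO\to\cO(a-1)^t$ has summands of non-positive degree, so $H^0(\cO(-a)\otimes N_2\otimes K)=0$.
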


\begin{proof} The proof is a modification of that of Proposition \ref{prop5}. Again we suppose that $(E,V)$ is a general element of $G(\alpha;n,an-t,k)$. Then, by Theorem \ref{t01}(i),
$E\cong{\mathcal O}(a)^{n-t}\oplus{\mathcal O(a-1)}^t$ and $V\cap H^0({\mathcal O}(a)^{n-t})=0$. It follows that there is an exact sequence
\begin{equation}\label{eq55}
0\lra ({\mathcal O}(a)^{n-t},0)\lra(E,V)\lra({\mathcal O}(a-1)^t, V')\lra0
\end{equation}
with $\dim V'=k$. Sequences \eqref{eq55} are classified by a $(n-t)$-tuple of elements in 
$$\Ext^1(({\mathcal O}(a-1)^t, V'),({\mathcal O}(a),0)).$$
For $(E,V)$ to be $\alpha$-stable, these elements must be linearly independent. Since $(E,V)$ is $\alpha$-stable, we have 
$$\Hom(({\mathcal O}(a-1)^t, V'),({\mathcal O}(a),0))=0.$$ 
Moreover, by \eqref{eq04}, 
$$\Ext^2(({\mathcal O}(a-1)^t, V'),({\mathcal O}(a),0))=0.$$ 
Hence, by \eqref{eq02} and \eqref{eq03},
\begin{multline}\nonumber\dim(\Ext^1(({\mathcal O}(a-1)^t, V'),({\mathcal O}(a),0)))=-t+(a-1)t-at+k(a+1)\\=-2t+k(a+1)< n-t.\end{multline}
This is a contradiction, which proves the proposition.
\end{proof}

\begin{proof}[Proof of Theorem \ref{t3}]
The theorem is a combination of Propositions \ref{prop5} and \ref{prop9}.
\end{proof}

\section{Existence for small $\alpha$}\label{exist2}

In this section, we look for lower bounds on $\alpha$ for the non-emptiness of $G(\alpha;n,d,k)$ and prove Theorem \ref{t4}. 

By Proposition \ref{prop02}, the $\alpha$-stability condition is independent of $\alpha$ for $\alpha>d(n-1)$. However we can do much better than this.

\begin{prop}\label{prop6} Suppose that $a\ge2$ and $1\le t\le n-1$ and let $h:=\gcd(n,k)$. Suppose further that $(E,V)\in G(\alpha;n,an-t,k)$ for large $\alpha$ with $E\cong{\mathcal O}(a)^{n-t}\oplus{\mathcal O(a-1)}^t$. Then $(E,V)$ is $\alpha$-stable for $\alpha>\frac{t(n-t)}h$.
\end{prop}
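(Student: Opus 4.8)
The plan is to show that for a coherent system $(E,V)$ of type $(n,an-t,k)$ with $E\cong\cO(a)^{n-t}\oplus\cO(a-1)^t$ that is $\alpha$-stable for large $\alpha$, the $\alpha$-slope inequality cannot first fail (as $\alpha$ decreases) until $\alpha$ reaches $\frac{t(n-t)}h$. By Proposition \ref{prop02}, for large $\alpha$ the destabilising test is governed entirely by the ratios $\frac{k_1}{n_1}$: since $(E,V)$ is $\alpha$-stable for large $\alpha$, every proper subsystem $(E_1,V_1)$ satisfies $\frac{k_1}{n_1}\le\frac{k}{n}$, and in fact $\frac{k_1}{n_1}<\frac kn$ unless equality of slopes is forced. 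The key observation is that $\frac{k_1}{n_1}=\frac kn$ can only happen when $n\mid hn_1$, i.e.\ when $n_1$ is a multiple of $\frac nh$; and for such subsystems the $\alpha$-term in $\mu_\alpha(E_1,V_1)-\mu_\alpha(E,V)$ vanishes identically, so these subsystems never destabilise for any $\alpha$ and can be ignored. Hence the only way $\alpha$-stability can fail as $\alpha$ decreases is through a subsystem with $\frac{k_1}{n_1}<\frac kn$, and the threshold is
\[\alpha\le\frac{n(d_1-k_1)-n_1(d-k)}{\,n_1k-nk_1\,}\cdot\frac{1}{?}\]
— more precisely, rearranging $\mu_\alpha(E_1,V_1)\ge\mu_\alpha(E,V)$ gives $\alpha(n k_1-n_1 k)\ge n d_1-n_1 d$, and since $nk_1-n_1k<0$ this reads $\alpha\le\dfrac{n_1 d-n d_1}{n k_1-n_1 k}$.

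First I would bound the numerator and denominator of this fraction. Since every subsystem $(E_1,V_1)$ of a general (indeed any) such $(E,V)$ has $E_1$ a subsheaf of $\cO(a)^{n-t}\oplus\cO(a-1)^t$, one gets $d_1\le an_1$, hence $n_1 d - n d_1 \ge n_1(an-t) - n\cdot an_1 = -t n_1$; combined with $d_1\ge (a-1)n_1$ (every direct summand of a saturated $E_1$ has degree $\ge a-1$ — or handle small-degree summands by passing to a quotient as in Section \ref{nec}) one obtains $n_1 d - n d_1 \le t(n-n_1)$. For the denominator: $nk_1 - n_1 k$ is a negative integer divisible by... this is where $h=\gcd(n,k)$ enters. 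Write $n = h n'$, $k = h k'$ with $\gcd(n',k')=1$. Then $nk_1 - n_1 k = h(n' k_1 - n_1 k')$, and since we are in the case $\frac{k_1}{n_1}<\frac kn$, i.e.\ $n' k_1 < n_1 k'$, the integer $n_1 k' - n' k_1$ is $\ge 1$, so $n_1 k - n k_1 \ge h$. Therefore
\[\alpha \le \frac{n_1 d - n d_1}{n k_1 - n_1 k} = \frac{n d_1 - n_1 d}{n_1 k - n k_1}\le \frac{t(n-n_1)}{h}\le\frac{t(n-1)}{h},\]
which already improves on $d(n-1)$ but is not yet the claimed bound. To get $\frac{t(n-t)}{h}$ I would instead use the sharper numerator estimate together with a matching sharper denominator estimate: when $d_1$ is close to $an_1$ the quotient $\cO(a)^{n-t}\oplus\cO(a-1)^t / E_1$ is forced to have many $\cO(a-1)$ summands, which constrains $k - k_1 = \dim(V/V_1)$ from above (at most $a(n-n_1)$ new sections from $\cO(a)$ summands plus $a(n-n_1)$... ) and correspondingly forces $n_1 k' - n' k_1$ to be large. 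Concretely, I expect the extremal case to be $n_1 = n - t$, $d_1 = a(n-t)$, $E_1 = \cO(a)^{n-t}$, where $n_1 d - n d_1 = -t(n-t)$... so one should look at the \emph{other} extreme, a quotient $(\cO(a-1)^t,V')$ where $n-n_1 = t$; there $n d_1 - n_1 d = t\cdot\big(n(a-1)+\ldots\big)$ — the precise bookkeeping showing the maximum of $\frac{n d_1 - n_1 d}{n_1 k - n k_1}$ over all admissible subsystems equals exactly $\frac{t(n-t)}{h}$ is the substance of the argument.

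The main obstacle, therefore, is the combinatorial optimisation: over all pairs $(n_1,d_1,k_1)$ arising from genuine subsystems of $\cO(a)^{n-t}\oplus\cO(a-1)^t$ with $\frac{k_1}{n_1}<\frac kn$, one must show $\dfrac{n d_1 - n_1 d}{\,n_1 k - n k_1\,}\le\dfrac{t(n-t)}{h}$, with equality attained. I would handle this by fixing $n_1$, noting the expression is monotone in $d_1$ (so the extreme is at $d_1 = an_1$ or at the minimal saturated value), and monotone in $k_1$ (so $k_1$ is as small as allowed — but $k_1$ is bounded below via $k_1 \ge k - (h^0(E) - h^0(E_1))$ since $V$ is general, by Theorem \ref{t01}(i)), then checking the two boundary families. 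The genericity of $V$ from Theorem \ref{t01}(i) is essential here, exactly as in the proof of Proposition \ref{prop1}, because it turns the lower bound on $k_1$ into a usable inequality; without it the proposition would be false. Once the optimisation is done, Lemma \ref{lem02} (or a direct extension argument) is not needed — the statement follows purely from Proposition \ref{prop02} and the slope bookkeeping above.
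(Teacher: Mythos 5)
Your reduction to subsystems with $\frac{k_1}{n_1}<\frac{k}{n}$ (via Proposition \ref{prop02}), the destabilising threshold $\alpha\le\frac{nd_1-n_1d}{n_1k-nk_1}$, and the denominator bound $n_1k-nk_1\ge h$ by divisibility are all correct and are exactly the paper's starting point (modulo a harmless sign slip in your rearrangement, and the irrelevant remark about $n\mid hn_1$). But the core of the proof is the numerator estimate, and this is precisely what you leave undone: you only use $d_1\le an_1$, which gives $nd_1-n_1d\le tn_1\le t(n-1)$ and hence the weaker bound $\frac{t(n-1)}{h}$, and you then defer ``the precise bookkeeping'' showing the maximum is $\frac{t(n-t)}{h}$ as ``the substance of the argument.'' The missing ingredient is the second degree bound for subsheaves of $E\cong\cO(a)^{n-t}\oplus\cO(a-1)^t$: a rank-$n_1$ subsheaf can meet at most $n-t$ summands of degree $a$, so $d_1\le\min\{an_1,(a-1)n_1+n-t\}$. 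Feeding this into the numerator gives $nd_1-n_1d\le\min\{n_1t,\;n_1t+n(n-t-n_1)\}$, whose maximum over $1\le n_1\le n-1$ is $t(n-t)$ (attained at $n_1=n-t$, $d_1=a(n-t)$, the case you correctly identify as extremal but do not prove extremal). Combined with the denominator bound $\ge h$ this finishes the proof in two lines.

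Moreover, your proposed route for closing the gap is misdirected. You plan to sharpen the \emph{denominator} by lower-bounding $k_1$ via genericity of $V$ (Theorem \ref{t01}(i)), and you assert that without genericity of $V$ ``the proposition would be false.'' That is not so: the paper's argument uses no information about $V$ beyond $k_1/n_1<k/n$, and the proposition as stated holds for an arbitrary $(E,V)$ in $G(\alpha;n,an-t,k)$ for large $\alpha$ with the given splitting type — all the improvement from $t(n-1)$ to $t(n-t)$ comes from the degree bound on $E_1$, not from constraining $k_1$. Also, your auxiliary claim $d_1\ge(a-1)n_1$ is false for subsheaves (saturated rank-one subsheaves of $\cO(a)^{n-t}\oplus\cO(a-1)^t$ can have arbitrarily small degree), and in any case a lower bound on $d_1$ bounds the threshold from below, not above, so it cannot help. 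As written, the proposal establishes only $\alpha$-stability for $\alpha>\frac{t(n-1)}{h}$ and leaves the actual statement unproved.
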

\begin{proof}
Suppose that $(E,V)$ is not $\alpha$-stable for some $\alpha>0$. By Proposition \ref{prop02}, any proper subsystem
 $(E_1,V_1)$ of type $(n_1,d_1,k_1)$ has either $\frac{k_1}{n_1}<\frac{k}{n}$ or $\frac{k_1}{n_1}=\frac{k}{n}$ and $\frac{d_1}{n_1}<\frac{d}{n}$. Since $(E,V)$ is not $\alpha$-stable for some $\alpha$, it follows that there exists $(E_1,V_1)$ with $\frac{k_1}{n_1}<\frac{k}{n}$. This subsystem contradicts $\alpha$-stability if and only if
$$\frac{d_1}{n_1}+\alpha\frac{k_1}{n_1}\ge\frac{d}n+\alpha\frac{k}n.$$
Note that $n_1k-nk_1\ge h$ and $d_1\le\min\{an_1,(a-1)n_1+n-t\}$. So
$$\alpha\left(\frac{k}n-\frac{k_1}{n_1}\right)\le \frac{d_1}{n_1}-\frac{d}n\le \frac1{n_1}\min\{an_1,(a-1)n_1+n-t\}-\frac{an-t}n,$$
and
$$\alpha h\le \alpha(n_1k-nk_1)\le\min\{n_1t,n_1t+n(n-t-n_1)\}.$$
For this to hold for some $n_1$, we require $\alpha\le \frac{t(n-t)}h$. It follows that $(E,V)$ is $\alpha$-stable for $\alpha>\frac{t(n-t)}h$.
\end{proof}

\begin{rem}\label{r5}\begin{em} In view of Propositions \ref{prop02} and \ref{prop6} and Theorem \ref{t2}, if $t>0$, the open interval $I(n,d,k):=\{\alpha|G(\alpha;n,d,k)\ne\emptyset\}$ is either empty or takes one of the following forms:
\begin{itemize}
\item $I(n,d,k)=]\alpha_1,\alpha_2[$ with $\alpha_c\le\alpha_1<\alpha_2\le d(n-1)$;
\item $I(n,d,k)=]\alpha_1,\infty[$ with $\alpha_c\le\alpha_1\le\frac{t(n-t)}h$.
\end{itemize}
\end{em}\end{rem}

\begin{cor}\label{c0}
If $G(\alpha;n,an-t,k)$ is non-empty for large $\alpha$, it is non-empty for all $\alpha>\frac{t(n-t)}h$.
\end{cor}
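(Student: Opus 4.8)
The plan is to combine Theorem \ref{t01} with Proposition \ref{prop6}, producing a single coherent system that witnesses non-emptiness simultaneously for every $\alpha>\frac{t(n-t)}h$. Note first that, by Proposition \ref{prop02}, the moduli space $G(\alpha;n,an-t,k)$ is literally constant for all sufficiently large $\alpha$, so the hypothesis ``non-empty for large $\alpha$'' means that there is a sufficiently large $\alpha_0$ with $G(\alpha_0;n,an-t,k)\ne\emptyset$ and that a given element of $G(\alpha_0;n,an-t,k)$ then lies in $G(\alpha;n,an-t,k)$ for all $\alpha\ge\alpha_0$.

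First I would apply Theorem \ref{t01}(i) to $G(\alpha_0;n,an-t,k)$: a general element $(E,V)$ has $E$ of generic splitting type, and since $d=an-t$ with $1\le t\le n-1$ this means precisely $E\cong\cO(a)^{n-t}\oplus\cO(a-1)^t$. Thus there exists a coherent system $(E,V)$ of type $(n,an-t,k)$ with this splitting type that is $\alpha$-stable for all large $\alpha$.

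Next I would feed this $(E,V)$ into Proposition \ref{prop6}: its hypotheses are exactly that $(E,V)\in G(\alpha;n,an-t,k)$ for large $\alpha$ with $E\cong\cO(a)^{n-t}\oplus\cO(a-1)^t$, so the conclusion is that this same $(E,V)$ is $\alpha$-stable for every $\alpha>\frac{t(n-t)}h$. Hence $(E,V)\in G(\alpha;n,an-t,k)$, and in particular $G(\alpha;n,an-t,k)\ne\emptyset$, for all such $\alpha$. (Equivalently, one can simply read the statement off Remark \ref{r5}: the hypothesis forces $I(n,an-t,k)$ to be of the form $]\alpha_1,\infty[$ with $\alpha_1\le\frac{t(n-t)}h$.)

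There is essentially no obstacle; the corollary is a bookkeeping consequence of results already in hand. The only point requiring a word of care is the compatibility of hypotheses — Proposition \ref{prop6} presupposes $a\ge2$ and $1\le t\le n-1$, which are in force throughout this section — and the remark above that ``for large $\alpha$'' should be read via Proposition \ref{prop02}, so that the general element furnished by Theorem \ref{t01}(i) genuinely belongs to $G(\alpha;n,an-t,k)$ for a whole semi-infinite range of $\alpha$, as Proposition \ref{prop6} demands.
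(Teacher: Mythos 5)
Your proposal is correct and matches the paper's own proof: the paper likewise observes that the general element of $G(\alpha;n,an-t,k)$ for large $\alpha$ has $E\cong\cO(a)^{n-t}\oplus\cO(a-1)^t$ (via Theorem \ref{t01}(i)) and then applies Proposition \ref{prop6} to conclude $\alpha$-stability of that same $(E,V)$ for all $\alpha>\frac{t(n-t)}h$. Your extra remarks about Proposition \ref{prop02} and the standing hypotheses are just the same bookkeeping made explicit.
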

\begin{proof}This follows from Proposition \ref{prop6} since the general element of $G(\alpha;n,an-t,k)$ has the form $(E,V)$ with $E\cong{\mathcal O}(a)^{n-t}\oplus{\mathcal O(a-1)}^t$.
\end{proof}

This estimate is in general a long way removed from the necessary condition 
$$\alpha>\max\left\{\frac{t}k,\frac{n-t}{an-k}\right\}$$
of Theorem \ref{t2}. However, there are cases in which it is best possible, for example $G(\alpha;2,3,3)$ (see Theorem \ref{t04}). More generally, we have

\begin{cor}\label{c1} Suppose that $a\ge2$ and $h$ divides $n$. Then 
$G(\alpha;n,an-1,an-h)$ is non-empty if and only if $\alpha>\frac{n-1}h$.
\end{cor}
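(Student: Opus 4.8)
The plan is to apply Proposition \ref{prop6} and Corollary \ref{c0} after first establishing that $G(\alpha;n,an-1,an-h)$ is non-empty for large $\alpha$, and then to check that the lower bound $\frac{n-1}{h}$ coming from Proposition \ref{prop6} cannot be improved. Note that here $t=1$, $d=an-1$, and $k=an-h$; since $h\mid n$ and $h\mid k$ (because $k=an-h$), we have $\gcd(n,k)=h$, so Proposition \ref{prop6} gives $\alpha$-stability of a general $(E,V)$ of this type for all $\alpha>\frac{t(n-t)}{h}=\frac{n-1}{h}$, and Theorem \ref{t2}'s necessary condition $\alpha>\alpha_c=\max\{\frac1{an-h},\frac{n-1}{h}\}=\frac{n-1}{h}$ already shows $G(\alpha;n,an-1,an-h)=\emptyset$ for $\alpha\le\frac{n-1}{h}$. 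So the corollary will follow once I know $G(\alpha;n,an-1,an-h)\ne\emptyset$ for some (equivalently, large) $\alpha$.

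For the non-emptiness for large $\alpha$, the natural route is Proposition \ref{prop02}: I would exhibit $E\cong\cO(a)^{n-1}\oplus\cO(a-1)$ together with a subspace $V\subset H^0(E)$ of dimension $k=an-h$ such that every proper subsystem $(E_1,V_1)$ has $\frac{k_1}{n_1}<\frac{k}{n}$. Since $t=1$ and $k=an-1-(h-1)\le an-1$, the case $h=1$ is already covered by Proposition \ref{prop2} (with $k=an-1$), so one may assume $h\ge2$; but then $a\ge t=1$ fails to give $a\ge t$ automatically only when... actually $a\ge2\ge t=1$ always, so Proposition \ref{prop4} does not directly apply since it requires $t\ge2$. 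Thus the real work is a parameter count in the spirit of Section \ref{exist1}: one must show that the family of subsystems $(E_1,V_1)$ of type $(n_1,d_1,k_1)$ violating stability has dimension strictly less than $\dim\Gr(k,h^0(E))$. Following the setup preceding Lemma \ref{lem1}, this reduces to verifying inequality \eqref{eq43}, and since $t=1<k<an$ the hypotheses of Lemma \ref{lem1} hold, so it suffices to check \eqref{eq45} with $t=1$ and $k=an-h$. As in Lemma \ref{lem2}, the left side of \eqref{eq45} is a quadratic in $k$ with positive leading coefficient, so I would check non-strict inequality at $k=n$ and strict inequality at $k=an-h$, the latter reducing (after the $d_1$-linearity argument) to the two endpoint values $d_1=n_1$ and $d_1=(a-1)n_1+n-1$ as in the proof of Lemma \ref{lem2}.

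The main obstacle is precisely this endpoint verification at $k=an-h$: unlike the clean $k=an-t$ computation in Lemma \ref{lem2}, the shift by $h$ rather than $1$ or $t$ changes the arithmetic, and one needs $h\mid n$ to make it work. I expect that at $d_1=n_1$ the expression becomes a negative perfect-square multiple, while at $d_1=(a-1)n_1+n-1$ it factors as $(n-n_1)$ times a quantity that must be shown negative for all $1\le n_1\le n-1$; this last inequality is where the divisibility $h\mid n$ (hence $n/h\ge1$ an integer, and $k/n=a-h/n$ with $h/n$ in lowest terms controlled) should enter, and it may again require splitting into $a=2$ and $a\ge3$ and checking the extreme values $n_1=1$, $n_1=n-1$, exactly as in Lemma \ref{lem2}. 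Once \eqref{eq45} is established, Proposition \ref{prop02} produces an $\alpha$-stable $(E,V)$ for large $\alpha$, Corollary \ref{c0} then upgrades this to non-emptiness for all $\alpha>\frac{n-1}{h}$, and combined with the necessary condition this gives the ``if and only if'' of the corollary.
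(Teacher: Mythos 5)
Your overall strategy --- the necessary bound $\alpha>\frac{n-1}{h}$ from Theorem \ref{t2}, existence for large $\alpha$, then Corollary \ref{c0} to descend to every $\alpha>\frac{n-1}{h}$ --- is exactly the paper's (whose proof is just the citation of Theorems \ref{t01}(ii) and \ref{t2} and Corollary \ref{c0}), and your observation that $h\mid n$ forces $\gcd(n,an-h)=h$ is the correct and only use of the divisibility hypothesis. The gap is in what you call the ``real work''. You treat Proposition \ref{prop2} as if it settled only the single value $k=an-1$, conclude that for $h\ge2$ a fresh verification of \eqref{eq45} at $k=an-h$ is required, guess that $h\mid n$ must enter that computation, and then leave the computation as an expectation rather than carrying it out. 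In fact no new parameter count is needed: with $t=1$ the hypothesis of Proposition \ref{prop2} is $n<k\le an-t=an-1$, and $k=an-h$ lies in this range for every $h\ge1$ (Lemma \ref{lem2} disposes of all such $k$ simultaneously, since the left side of \eqref{eq45} is convex in $k$ and is checked at $k=n$ and $k=an-t$ only). Equivalently, the last assertion of Theorem \ref{t2} applies verbatim, because $k=an-h\le an-t$ and $k\ge n$; so existence for large $\alpha$ is immediate, and the divisibility $h\mid n$ plays no role whatsoever in that step --- it only pins down the value $\frac{t(n-t)}{h}=\frac{n-1}{h}$ in Corollary \ref{c0}.

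A second, smaller point: your route through Proposition \ref{prop2} silently assumes $k>n$, which fails in the admissible case $a=2$, $h=n$ (then $k=n$, $d=2n-1$). The paper's citation of Theorem \ref{t2} covers this, since the proof of Theorem \ref{t2} handles $k=n$ via Proposition \ref{prop08}; if you argue directly from Proposition \ref{prop2} you must treat $k=n$ separately. So as written your proof is incomplete at its central step, but the hole is filled not by the endpoint computation you anticipate (which is unnecessary) but simply by citing Proposition \ref{prop2}/Theorem \ref{t2} in their full stated generality.
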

\begin{proof} This follows at once from Theorems \ref{t01}(ii) and \ref{t2} and Corollary \ref{c0}.
\end{proof}

\begin{prop}\label{prop7}
Suppose that $a\ge2$, $1\le t\le n-1$, $at\le k<an$ and
\begin{equation}\label{eq61}(a-1)t\le a(an-k)+(a-2)n.
\end{equation} 
Let $\alpha_c:=\frac{n-t}{an-k}$ and suppose that $G(\alpha_c;n-t,a(n-t),k-at)\ne\emptyset$. Then 
\begin{itemize}
\item[(i)]$G(\alpha;n,an-t,k)=\emptyset$ for $\alpha\le\alpha_c$;
\item[(ii)]$G(\alpha_c^+;n,an-t,k)\ne\emptyset$.
\end{itemize}
\end{prop}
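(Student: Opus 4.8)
The plan is to obtain (i) directly from the necessary conditions of Section~\ref{nec} and to obtain (ii) by an explicit extension construction whose only nontrivial input is the hypothesis $G(\alpha_c;n-t,a(n-t),k-at)\ne\emptyset$. For (i): since $k\ge at$ one has $t(an-k)\le k(n-t)$, i.e. $\frac tk\le\frac{n-t}{an-k}=\alpha_c$, so $\alpha_c=\max\{\frac tk,\frac{n-t}{an-k}\}$; Proposition~\ref{prop1} (applied with $d=an-t$, $t\ge1$) then says immediately that $G(\alpha;n,an-t,k)=\emptyset$ whenever $\alpha\le\alpha_c$.

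For (ii) I would set $(G_1,W_1):=(\cO(a-1),H^0(\cO(a-1)))$, of type $(1,a-1,a)$; this is $\alpha$-stable for every $\alpha>0$, its only proper subsystems being $(\cO(a-1),V')$ with $V'\subsetneq W_1$. By hypothesis choose $(F,W)\in G(\alpha_c;n-t,a(n-t),k-at)$ general, so that by Theorem~\ref{t01}(i) we may take $F\cong\cO(a)^{n-t}$ and $W$ general in $\Gr(k-at,H^0(F))$ (if $k=at$ the hypothesis forces $n-t=1$ and $W=0$, with no change below). Form $(E,V)$ as a general extension
$$0\lra(F,W)\lra(E,V)\lra(G_1,W_1)^t\lra0 .$$
Additivity of types gives that $(E,V)$ has type $(n,an-t,k)$, and since $\Ext^1(\cO(a-1),\cO(a))=0$ the underlying bundle extension splits, so $E\cong\cO(a)^{n-t}\oplus\cO(a-1)^t$, in accordance with Theorem~\ref{t01}(i). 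The choice $\alpha_c=\frac{n-t}{an-k}$ is precisely what forces $\mu_{\alpha_c}(F,W)=\mu_{\alpha_c}(G_1,W_1)=\mu_{\alpha_c}(E,V)=:\mu_0$, so that $(E,V)$, being an extension of $\alpha_c$-semistable systems of slope $\mu_0$ by an $\alpha_c$-semistable system of slope $\mu_0$, is automatically $\alpha_c$-semistable.

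The crux is to show that a general such extension is $\alpha_c^+$-stable. Arguing as in the proof of Lemma~\ref{lem02}, it suffices to show that for a general extension every proper subsystem $(E',V')$ of $\alpha_c$-slope $\mu_0$ satisfies $\frac{k'}{n'}<\frac kn$ (subsystems of $\alpha_c$-slope $<\mu_0$ cannot destabilise just above $\alpha_c$, and there are none of slope $>\mu_0$). A seesaw argument shows that $(E',V')\cap(F,W)$ is either $0$ or $(F,W)$, and in either case the relevant sub or quotient of $(G_1,W_1)^t$ has slope $\mu_0$, hence is of the form $(G_1,W_1)^s$ embedded via a subspace $U\subseteq\CC^t$ (using that $(G_1,W_1)$ is $\alpha_c$-stable with $\Ext^1((G_1,W_1),(G_1,W_1))=0$). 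If $(E',V')\supseteq(F,W)$ then $(E',V')=\pi^{-1}((G_1,W_1)^s)$ with $s<t$, of type $(n-t+s,\,a(n-t)+(a-1)s,\,k-at+as)$, and one checks $\frac{k-at+as}{n-t+s}<\frac kn\iff s<t$, so these never destabilise; if $(E',V')\cap(F,W)=0$ then $(E',V')\cong(G_1,W_1)^s$ sits in $(E,V)$ exactly when $U$ lies in the kernel of the classifying map $\phi\colon\CC^t\to\Ext^1((G_1,W_1),(F,W))$, which for a general extension is injective as soon as $\dim\Ext^1((G_1,W_1),(F,W))\ge t$. By Proposition~\ref{prop01}, using $\Hom((G_1,W_1),(F,W))=0$ (distinct $\alpha_c$-stable systems of equal slope) and $\Ext^2((G_1,W_1),(F,W))=0$ (Lemma~\ref{l01}), I expect
$$\dim\Ext^1((G_1,W_1),(F,W))=(a^2+a-2)n-(a-2)t-ak ,$$
and a short rearrangement shows that hypothesis \eqref{eq61} is exactly the inequality that this quantity is $\ge t$. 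Granting this, the only proper subsystems of $(E,V)$ of $\alpha_c$-slope $\mu_0$ are $(F,W)$ and the $\pi^{-1}((G_1,W_1)^s)$ with $s<t$, all with $\frac{k'}{n'}<\frac kn$, so $(E,V)$ is $\alpha_c^+$-stable and $G(\alpha_c^+;n,an-t,k)\ne\emptyset$.

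The main obstacle is this $\Ext^1$ computation and the verification that it coincides with \eqref{eq61}; once that is in hand, the remainder is the standard extension/openness machinery (as in Lemma~\ref{lem02}) together with the elementary slope inequality $\frac{k-at+as}{n-t+s}<\frac kn$ for $s<t$. A secondary point to handle carefully is the boundary case $k=at$, where one must note that the hypothesis $G(\alpha_c;n-t,a(n-t),0)\ne\emptyset$ forces $n-t=1$ (so that $(F,W)=(\cO(a),0)$ is indeed $\alpha_c$-stable).
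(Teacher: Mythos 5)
Your proposal is correct and follows essentially the same route as the paper: (i) from Proposition \ref{prop1}, and (ii) by building $(E,V)$ as an extension of $(\cO(a-1),H^0(\cO(a-1)))^t$ by a general $(\cO(a)^{n-t},W)$ with linearly independent extension classes, which is possible exactly because $\dim\Ext^1\ge t$ is equivalent to \eqref{eq61}, and then checking that the only equal-$\alpha_c$-slope subsystems are the pullbacks $\pi^{-1}((G_1,W_1)^s)$, which do not destabilise for $\alpha>\alpha_c$. Your $\Ext^1$ formula and its equivalence to \eqref{eq61} agree with the paper's computation (done there in the proof of Proposition \ref{prop5}), so the step you flagged as the main obstacle is indeed fine.
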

\begin{proof} (i) This is an immediate consequence of Proposition \ref{prop1}.

(ii) The general element of $G(\alpha_c;n-t,a(n-t),k-at)\ne\emptyset$ has the form $({\mathcal O}(a)^{n-t},W)$, where $\dim W=k-at$. We now consider again sequences \eqref{eq53}. The same calculation as in the proof of Proposition \ref{prop5} gives
$$\dim(\Ext^1(({\mathcal O}(a-1), H^0({\mathcal O}(a-1))),({\mathcal O}(a)^{n-t},W)))\ge t.$$
We can therefore choose the elements of the $t$-tuple classifying \eqref{eq53} to be linearly independent. Now note that 
$$\mu_{\alpha_c}({\mathcal O}(a)^{n-t},W)=\mu_{\alpha_c}(({\mathcal O}(a-1), H^0({\mathcal O}(a-1)))^t),$$
so $(E,V)$ is $\alpha_c$-semistable. It follows that any subsystem $(E_1,V_1)$ contradicting $\alpha_c^+$-stability will also contradict $\alpha_c$-stability and therefore has the same $\alpha_c$-slope as $({\mathcal O}(a-1), H^0({\mathcal O}(a-1)))^t$ and $({\mathcal O}(a)^{n-t},W)$. This can happen only if $(E_1,V_1)$ either contains $({\mathcal O}(a)^{n-t},W)$ or intersects it in $0$ and also maps onto a direct factor of $({\mathcal O}(a-1), H^0({\mathcal O}(a-1)))^t$. If the intersection is $0$, this contradicts the linear independence condition. Otherwise, we have an exact sequence
\[0\lra(\cO(a)^{n-t},W)\lra(E_1,V_1)\lra(\cO(a-1),H^0((a-1)))^s\lra0 \]
for some $s<t$. We have
\begin{eqnarray*}\mu_\alpha(E_1,V_1)&=&\frac1{n-t+s}\big(a(n-t)+\alpha(k-at)+(a-1)s+\alpha as\big)\\
&=&a+\frac1{n-t+s}\big(-s+\alpha(k-at+as)\big).
\end{eqnarray*}
So $(E_1,V_1)$ contradicts the $\alpha$-stability of $(E,V)$ if and only if
\[a+\frac1{n-t+s}\big(-s+\alpha(k-at+as)\big)\ge\mu_\alpha(E,V)=a+\frac1n(-t+\alpha k),\]
i.e., if and only if
\[n(-s+\alpha(k-at+as))\ge(n-t+s)(-t+\alpha k).\]
Rearranging, this becomes
\[\alpha(t-s)(an-k)\le(t-s)(n-t),\]
i.e., 
\[\alpha\le\alpha_c.\]
So $(E,V)$ is $\alpha_c^+$-stable.
\end{proof}

\begin{cor}\label{c2}
Suppose that the hypotheses of Proposition \ref{prop7} hold with $k\ge n$ and either $k\le an-t$ or $a\ge t$. Then
\begin{itemize}
\item[(i)] $G(\alpha;n,an-t,k)\ne\emptyset$ if and only if $\alpha>\alpha_c$;
\item[(ii)] $G(\alpha;n,2n-1,n+1)\ne\emptyset$ if and only if $\alpha>1$.
\end{itemize}
\end{cor}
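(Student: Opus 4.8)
The plan is to deduce both parts directly from Proposition \ref{prop7} together with the results already collected in the review, so very little new work is needed. For part (i), Proposition \ref{prop7}(i) already gives that $G(\alpha;n,an-t,k)=\emptyset$ for $\alpha\le\alpha_c$, so the content is the converse: under the stated extra hypotheses, $G(\alpha;n,an-t,k)\ne\emptyset$ for every $\alpha>\alpha_c$. First I would invoke Proposition \ref{prop7}(ii) to obtain non-emptiness of $G(\alpha_c^+;n,an-t,k)$, i.e.\ for $\alpha$ in some open interval $]\alpha_c,\alpha_c+\varepsilon[$. Next I would show the moduli space is non-empty for \emph{large} $\alpha$ as well: this is exactly where the hypothesis ``$k\le an-t$ or $a\ge t$'' (combined with $k\ge n$, hence $k>n$ unless $k=n$, and $k<an$ from Proposition \ref{prop7}) is used, via Proposition \ref{prop2} or Proposition \ref{prop4} together with Proposition \ref{prop02} — precisely the argument already run in the proof of Theorem \ref{t2}. (If $k=n$, one uses instead that $an-t>n$ and Proposition \ref{prop08}.) Having non-emptiness for $\alpha$ near $\alpha_c$ and for all large $\alpha$, I would then apply Theorem \ref{t01}(ii): the set $I(n,d,k)=\{\alpha\mid G(\alpha;n,d,k)\ne\emptyset\}$ is an open interval. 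An open interval containing points arbitrarily close to (but greater than) $\alpha_c$ and also all sufficiently large $\alpha$, but containing no $\alpha\le\alpha_c$, must equal $]\alpha_c,\infty[$. This gives (i).

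For part (ii), I would simply specialise: take $a=2$, $t=1$, so $d=2n-1$ and $k=n+1$. One checks the hypotheses of Proposition \ref{prop7}: $k\ge n$ holds, $at=2\le n+1=k<2n=an$ holds for $n\ge2$, and inequality \eqref{eq61} reads $(a-1)t=1\le a(an-k)+(a-2)n=2(n-1)$, true for $n\ge2$. Here $\alpha_c=\frac{n-t}{an-k}=\frac{n-1}{n-1}=1$. The auxiliary non-emptiness condition $G(\alpha_c;n-t,a(n-t),k-at)\ne\emptyset$ becomes $G(1;n-1,2(n-1),n-1)\ne\emptyset$, which holds by Proposition \ref{prop08} since $2(n-1)>n-1$ (the rank equals the number of sections case, non-empty for all $\alpha$ when $d>n$). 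Finally the extra hypothesis of Corollary \ref{c2}(i) is satisfied since $a=2\ge1=t$ (and also $k=n+1\le an-t=2n-1$ for $n\ge2$). Thus (i) applies and yields $G(\alpha;n,2n-1,n+1)\ne\emptyset$ if and only if $\alpha>1$.

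The only step requiring any care is verifying that the hypothesis ``$k\le an-t$ or $a\ge t$'' really does put us in the scope of Proposition \ref{prop2} or Proposition \ref{prop4} for the large-$\alpha$ statement; this is routine since it is the same case division used in the proof of Theorem \ref{t2}, but I would spell out that when $k=n$ one falls back on Proposition \ref{prop08} instead. I do not expect any genuine obstacle: the corollary is essentially the observation that Proposition \ref{prop7}(ii) plus a large-$\alpha$ existence result plus the interval property of Theorem \ref{t01}(ii) pin down $I(n,d,k)$ completely.
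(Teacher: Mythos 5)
Your proposal is correct and follows essentially the same route as the paper, whose proof of (i) simply combines Proposition \ref{prop7} (emptiness for $\alpha\le\alpha_c$ and non-emptiness at $\alpha_c^+$), the large-$\alpha$ existence statement of Theorem \ref{t2} (which you unpack into Propositions \ref{prop02}, \ref{prop2}, \ref{prop4} and \ref{prop08}, exactly as in that theorem's proof), and the interval property of Theorem \ref{t01}(ii), while (ii) is just the specialisation $a=2$, $t=1$, $k=n+1$ with $\alpha_c=1$, whose hypothesis checks you carry out correctly.
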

\begin{proof} (i) This follows at once from Theorems \ref{t01}(ii) and \ref{t2} and Proposition \ref{prop7}.

(ii) This is a special case of (i), noting that now $\alpha_c=1$.
\end{proof}

\begin{rem}\label{r2}\begin{em}
(i) The special case $n=2$, $k=3$ of Corollary \ref{c2} is included in Theorem \ref{t04}.

(ii) There is no assumption in Proposition \ref{prop7} that $k\ge n$.

(iii) The hypothesis $G(\alpha_c;n-t,a(n-t),k-at)\ne\emptyset$ certainly holds by Theorem \ref{t1} and Remark \ref{r1}  if $k-at\ge n-t$, i.e. $k\ge n+(a-1)t$. It may still  hold for smaller values of $k$ but not for $k=at$ unless $t=n-1$.
\end{em}\end{rem}

\begin{cor}\label{c5}
Suppose that $n\ge2$, $a\ge n-1$, $t=n-1$ and $a(n-1)<k<an$. Then $G(\alpha;n,(a-1)n+1,k)\ne\emptyset$ if and only if $\alpha>\alpha_c=\frac1{an-k}$.
\end{cor}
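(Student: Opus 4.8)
The plan is to obtain Corollary \ref{c5} as the special case $t=n-1$ (so $d=an-t=(a-1)n+1$) of Corollary \ref{c2}(i), by checking all its hypotheses: those of Proposition \ref{prop7}, together with $k\ge n$ and the requirement that $k\le an-t$ or $a\ge t$. First I would dispose of a degenerate case: if $n=2$, the existence of an integer $k$ with $a(n-1)<k<an$ forces $a\ge2$, while for $n\ge3$ one has $a\ge n-1\ge2$; so we may assume $a\ge2$, the running hypothesis of Proposition \ref{prop7}.

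Next I would pin down $\alpha_c$. Since $n-t=1$, $\frac{n-t}{an-k}=\frac1{an-k}$, and I would check that this equals $\max\{\frac tk,\frac{n-t}{an-k}\}$: the inequality $\frac tk\le\frac{n-t}{an-k}$ reduces to $a(n-1)\le k$, which holds strictly by hypothesis. Thus $\alpha_c=\frac1{an-k}$ in the sense of both Theorem \ref{t2} and Proposition \ref{prop7}, as in the statement.

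Then I would verify the remaining hypotheses. The condition $at\le k<an$ is immediate from $at=a(n-1)<k<an$, and $k\le an-t$ or $a\ge t$ holds since $a\ge n-1=t$. The inequality \eqref{eq61}, namely $(a-1)(n-1)\le a(an-k)+(a-2)n$, has right-hand side decreasing in $k$, so it suffices to treat $k=an-1$, where it becomes $2a\ge n+1$, a consequence of $a\ge n-1$ and $n\ge2$ (using $a\ge2$ when $n=2$). For $k\ge n$: one has $k>a(n-1)\ge(n-1)^2\ge n$ when $n\ge3$, and $k>a\ge2=n$ when $n=2$. Finally, the hypothesis $G(\alpha_c;n-t,a(n-t),k-at)\ne\emptyset$ becomes $G(\alpha_c;1,a,k-a(n-1))\ne\emptyset$; here $1\le k-a(n-1)\le a-1\le h^0(\cO(a))$, and any rank-one coherent system $(L,W)$ with $W\ne0$ is $\alpha$-stable for all $\alpha>0$, since a proper subsystem $(L',W')$ satisfies $\deg L'\le\deg L$ and $\dim W'\le\dim W$ with at least one inequality strict and hence has strictly smaller $\alpha$-slope; so $G(\alpha_c;1,a,k-a(n-1))\ne\emptyset$. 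Corollary \ref{c2}(i) then gives $G(\alpha;n,(a-1)n+1,k)\ne\emptyset$ if and only if $\alpha>\frac1{an-k}$.

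There is no genuine obstacle here: the substance lies entirely in Proposition \ref{prop7} and Corollary \ref{c2}, and the only points requiring care are the bookkeeping ones above — that $a\ge2$ and $k\ge n$ hold (including in the boundary case $n=2$) and that the elementary inequality \eqref{eq61} is satisfied.
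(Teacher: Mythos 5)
Your proposal is correct and follows essentially the same route as the paper: the paper's own proof of Corollary \ref{c5} also just notes $a\ge2$, verifies \eqref{eq61} and $G(\alpha;1,a,k-a(n-1))\ne\emptyset$, and then applies Theorems \ref{t01}(ii) and \ref{t2} together with Proposition \ref{prop7}, which is exactly the content of the Corollary \ref{c2}(i) you invoke. Your extra bookkeeping (that $\alpha_c=\frac1{an-k}$ dominates $\frac tk$, that $k\ge n$, and the $n=2$ boundary case) is accurate and merely makes explicit what the paper leaves implicit.
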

\begin{proof}
Note that the hypotheses imply that $a\ge2$. In this case \eqref{eq61} holds and $G(\alpha;1,a,k-a(n-1))\ne\emptyset$ for all $\alpha$. The result follows from Theorems \ref{t01}(ii) and \ref{t2} and Proposition \ref{prop7}.
\end{proof}

\begin{ex}\label{ex1}\begin{em}
The assumption $a\ge n-1$ in Corollary \ref{c5} is needed in order to apply Theorem \ref{t2} but not for Proposition \ref{prop7}. Suppose in particular that $n\ge3$, $a\ge2$, $t=n-1$ and $k=a(n-1)$. Then \eqref{eq61} holds if and only if $n\le a^2+a-1$. So, by Proposition \ref{prop5}, $G(\alpha;n,(a-1)n+1,a(n-1))=\emptyset$ if $n>a^2+a-1$. On the other hand, we have certainly $G(\alpha;1,a,0)\ne\emptyset$ for all $\alpha$, so, if $n\le a^2+a-1$, then $G(\alpha_c^+;n,(a-1)n+1,a(n-1))\ne\emptyset$ by Proposition \ref{prop7}. On the other hand, we know from Theorem \ref{t2} that $G(\alpha;n,(a-1)n+1,a(n-1))\ne\emptyset$ for large $\alpha$ if $n\le a+1$, but we do not know whether this still holds when $a+1<n\le a^2+a-1$. This therefore provides candidates for counterexamples to Conjecture \ref{conj2}.
\end{em}\end{ex}

\begin{cor}\label{c3}
Suppose that \eqref{eq61} holds and $k=at+1\ge n$ with $a\ge2$, $1\le t\le n-1$ and $a\ge \max\{n-t-1,t\}$. Then $G(\alpha;n,d,k)\ne\emptyset$ if and only if $\alpha>\alpha_c=\frac{n-t}{a(n-t)-1}$.
\end{cor}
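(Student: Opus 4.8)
The plan is to obtain Corollary~\ref{c3} as a direct consequence of Corollary~\ref{c2}(i): under the assumptions of the corollary all the hypotheses of Corollary~\ref{c2}(i) are satisfied, the only one requiring genuine work being that $G(\alpha_c;n-t,a(n-t),k-at)\ne\emptyset$.

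First I would dispose of the routine points. Since $k=at+1$, we have $an-k=a(n-t)-1$, so the number $\alpha_c=\frac{n-t}{a(n-t)-1}$ of the statement coincides with the $\alpha_c=\frac{n-t}{an-k}$ of Proposition~\ref{prop7}; moreover $\frac{t}{at+1}<\frac{n-t}{a(n-t)-1}$, since, both denominators being positive, cross-multiplying reduces this to $n>0$, and hence $\alpha_c=\max\{\frac{t}{k},\frac{n-t}{an-k}\}$. Next, $at\le k$ is trivial and $k=at+1\le a(n-1)+1\le an-1<an$ because $a\ge2$; combined with the hypotheses $k\ge n$, $a\ge t$ and \eqref{eq61}, this shows that all the hypotheses of Corollary~\ref{c2}(i) hold as soon as we know $G(\alpha_c;n-t,a(n-t),1)\ne\emptyset$ (note $k-at=1$).

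To establish this, I would split into two cases. If $n-t=1$, the relevant moduli space is $G(\alpha_c;1,a,1)$, and a rank-one coherent system with a one-dimensional space of sections is $\alpha$-stable for every $\alpha>0$ (any proper subsystem has rank one and $\alpha$-slope strictly less than $a+\alpha$); since $\alpha_c>0$ this case is done. If $n-t\ge2$, put $n':=n-t$, $d':=an'$ and apply the $k=1$ case of Theorem~\ref{t03}: here $t'=0$ and the integers $l',m'$ are given by $a=l'(n'-1)+m'$ with $0\le m'<n'-1$, so $l'\ge1$ if and only if $a\ge n'-1=n-t-1$, which is part of the hypotheses; consequently $G(\alpha;n',d',1)\ne\emptyset$ precisely for $0<\alpha<\frac{n'(a-m')}{n'-1}$. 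It remains to check that $\alpha_c$ lies in this open interval. We have $\alpha_c=\frac{n'}{an'-1}>0$, and $\alpha_c<\frac{n'(a-m')}{n'-1}$ is equivalent to $n'-1<(a-m')(an'-1)$, which holds because $a-m'=l'(n'-1)\ge n'-1$ and $an'-1\ge 2n'-1>1$.

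Once all hypotheses of Corollary~\ref{c2}(i) are in place, that corollary gives $G(\alpha;n,an-t,k)\ne\emptyset$ if and only if $\alpha>\alpha_c$, which is the assertion. I do not expect a real obstacle here; the only points requiring care are the correct identification of the auxiliary integers $l',m'$ of Theorem~\ref{t03} and the short inequality locating $\alpha_c$ strictly below the upper endpoint of $I(n-t,a(n-t),1)$.
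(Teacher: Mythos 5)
Your proposal is correct and follows essentially the same route as the paper: the paper's proof also reduces the statement to Proposition \ref{prop7} together with Theorems \ref{t01}(ii) and \ref{t2} (which is exactly the content of Corollary \ref{c2}(i)), splits into the cases $t=n-1$ and $t\le n-2$, and in the latter case verifies $G(\alpha_c;n-t,a(n-t),1)\ne\emptyset$ via the $k=1$ case of Theorem \ref{t03} with the same identification $a=l(n-t-1)+m$ and the same inequality $\alpha_c<\frac{(n-t)(a-m)}{n-t-1}$. Your verification of the routine hypotheses and of this inequality is sound, so no further comment is needed.
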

\begin{proof}
We need to show that $G(\alpha_c;n-t,a(n-t),1)\ne\emptyset$. When $t=n-1$, $G(\alpha;1,a,1)\ne\emptyset$ for all $\alpha$. Suppose now that $t\le n-2$. For coherent systems of this type, the numbers $l$ and $m$ in Proposition \ref{prop03} are defined by $a=l(n-t-1)+m$ with $0\le m<n-t-1$. Since $a\ge n-t-1$, we have $l\ge1$, so, by Theorem \ref{t03}, $G(\alpha_c;n-t,a(n-t),1)\ne\emptyset$ provided that
$$\alpha_c=\frac{n-t}{a(n-t)-1}<\frac{a(n-t)-m(n-t)}{n-t-1}.$$
This is easily seen to be true. The result now follows from Theorems \ref{t01}(ii) and \ref{t2} and Proposition \ref{prop7}.
\end{proof} 

In the case $t=1$, we can improve Proposition \ref{prop7}.

\begin{prop}\label{prop8}
Suppose that $n\ge2$, $a\ge2$ and $k<an$ and let $\alpha_c:=\max\{\frac1k,\frac{n-1}{an-k}\}$. Then $G(\alpha_c^+;n,an-1,k)\ne\emptyset$ in the following three cases:
\begin{itemize}
\item[(a)] $k\le a$ and $(a+1)k\ge n+1$;
\item[(b)] $k\ge n+a-1$;
\item[(c)] $a<k<n+a-1$ and $G(\alpha_c;n-1,a(n-1),k-a)\ne\emptyset$.
\end{itemize}
If, in addition, $k\ge n$, then $G(\alpha;n,an-1,k)\ne\emptyset$ if and only if $\alpha>\alpha_c$.
\end{prop}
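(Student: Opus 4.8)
The plan is to follow the strategy of Proposition \ref{prop7} but take advantage of the fact that $t=1$ to allow smaller values of $k$. In all three cases I would aim to produce an $\alpha_c^+$-stable coherent system $(E,V)$ with $E\cong{\mathcal O}(a)^{n-1}\oplus{\mathcal O}(a-1)$ by exhibiting it as an extension. The key point is that since $t=1$, the torsion-type quotient $({\mathcal O}(a-1),H^0({\mathcal O}(a-1)))^t$ in \eqref{eq53} is a single factor, so the ``linear independence'' condition on the classifying tuple becomes simply the nonvanishing of one extension class; this is why no auxiliary moduli space is needed in cases (a) and (b). For case (c) the argument is essentially Proposition \ref{prop7}(ii) applied with $t=1$: the hypothesis $G(\alpha_c;n-1,a(n-1),k-a)\ne\emptyset$ supplies the system $({\mathcal O}(a)^{n-1},W)$ with $\dim W=k-a$, one checks (as in the proof of Proposition \ref{prop5}) that the relevant $\Ext^1$ is nonzero, takes a nonsplit extension, and verifies via the slope computation already carried out in Proposition \ref{prop7} that the only destabilising subsystems would force $\alpha\le\alpha_c$.

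For case (a), where $k\le a$, the subspace $W\subset H^0({\mathcal O}(a)^{n-1})$ has dimension $k-a\le0$, so one instead wants $W=0$ and an extension
\[0\lra({\mathcal O}(a)^{n-1},0)\lra(E,V)\lra({\mathcal O}(a-1),V')\lra0\]
with $\dim V'=k$ and $V'\subset H^0({\mathcal O}(a-1))$ of dimension $k\le a$, which exists since $h^0({\mathcal O}(a-1))=a$. This is a ``dual'' version of \eqref{eq53} analogous to the sequence \eqref{eq55} used in Proposition \ref{prop9}. I would compute $\dim\Ext^1(({\mathcal O}(a-1),V'),({\mathcal O}(a),0))$ using \eqref{eq02}, \eqref{eq03}, \eqref{eq04} (the relevant $\Hom$ and $\Ext^2$ vanish for degree reasons), find it equals $k(a+1)-2$, and note this is $\ge n-1$ precisely because $(a+1)k\ge n+1$; this lets one choose an $(n-1)$-tuple of linearly independent classes. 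One then checks that the resulting $(E,V)$ is $\alpha_c^+$-stable: at $\alpha=\alpha_c$ the two factors have equal $\alpha_c$-slope (here $\alpha_c=\frac{n-1}{an-k}$ since $k\le a$ forces $\frac1k\ge\frac{n-1}{an-k}$ only when... — actually one must be careful which term in the max is active), so $(E,V)$ is $\alpha_c$-semistable, and any subsystem contradicting $\alpha_c^+$-stability has the same $\alpha_c$-slope and hence, by the linear independence, must be built from a sub-sum ${\mathcal O}(a)^{s}$ of ${\mathcal O}(a)^{n-1}$ together with part of $V'$; the same slope rearrangement as in Proposition \ref{prop7} then yields $\alpha\le\alpha_c$, a contradiction.

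For case (b), where $k\ge n+a-1$, one has $k-a\ge n-1=n-t$, so by Remark \ref{r2}(iii) (via Theorem \ref{t1} and Remark \ref{r1}) the moduli space $G(\alpha_c;n-1,a(n-1),k-a)$ is automatically non-empty, and case (b) reduces to case (c). Finally, the last sentence of the proposition follows at once: if $k\ge n$ then by Proposition \ref{prop1} we have $G(\alpha;n,an-1,k)=\emptyset$ for $\alpha\le\alpha_c$, while the three cases together with Theorem \ref{t01}(ii) (openness of $I(n,d,k)$) and the large-$\alpha$ existence from Theorem \ref{t2} give non-emptiness for all $\alpha>\alpha_c$. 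The main obstacle I anticipate is bookkeeping around which of $\frac1k$ and $\frac{n-1}{an-k}$ realises $\alpha_c$ in each case and checking that the $\alpha_c$-semistability argument (identifying exactly which subsystems can share the maximal $\alpha_c$-slope) goes through when $W=0$; the $\Ext$-dimension computations themselves are routine applications of Proposition \ref{prop01}.
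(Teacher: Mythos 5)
Your proposal is correct in substance, and for cases (b), (c) and the final assertion it is the paper's argument: (c) is exactly Proposition \ref{prop7} with $t=1$ (noting that \eqref{eq61} is automatic for $t=1$, $k<an$, $a\ge2$), (b) reduces to it because $k-a\ge n-1$ makes $G(\alpha_c;n-1,a(n-1),k-a)\ne\emptyset$ by Remark \ref{r2}(iii), and the equivalence for $k\ge n$ follows from Theorem \ref{t2} together with the interval structure of Theorem \ref{t01}(ii). Where you genuinely diverge is case (a): the paper disposes of it in one line by citing Proposition \ref{prop06} (i.e.\ \cite[Proposition 5.12]{ln2} with $t=1$, $a\ge k$), whereas you construct the coherent system directly as a general extension $0\to(\cO(a)^{n-1},0)\to(E,V)\to(\cO(a-1),V')\to0$ with $\dim V'=k\le a$; your count $\dim\Ext^1((\cO(a-1),V'),(\cO(a),0))=k(a+1)-2\ge n-1$ is right and is exactly the hypothesis $(a+1)k\ge n+1$. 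This is in effect the paper's own Proposition \ref{prop10} specialised to $t=1$, and you could shortcut your hand-made $\alpha_c^+$-stability verification by invoking Lemma \ref{lem02}: since $k\le a$ the active term of the maximum is $\alpha_c=\frac1k$ (not $\frac{n-1}{an-k}$, the point you left unresolved), and precisely at $\alpha=\frac1k$ one has $\mu_{\alpha_c}(\cO(a-1),V')=a$, so the lemma applies verbatim with $r=n-1$, $b=a$. What each approach buys: the paper's citation is shorter but imports an external result (and formally requires $k\ge2$ there, the case $k=1$ being covered instead by Theorem \ref{t03}), while your construction is self-contained, uniform with Proposition \ref{prop10}, and covers $k=1$ as well.
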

\begin{proof}
(a) This is contained in Proposition \ref{prop06}.

(b) Here \eqref{eq61} holds, so this is a consequence of Proposition \ref{prop7} and Remark \ref{r2}.

(c) This is a special case of Proposition \ref{prop7}.

The final assertion now follows from Theorem \ref{t2}.
\end{proof}

\begin{cor}\label{c4}
Suppose that $a\ge2$ and $2\le k<2a$. Then $G(\alpha;2,2a-1,k)\ne\emptyset$ if and only if $\alpha>\max\{\frac1k,\frac1{2a-k}\}$.
\end{cor}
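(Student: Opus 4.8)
The plan is to deduce this directly from Proposition \ref{prop8} by specializing to $n=2$, $t=1$, $d=2a-1$. First I would observe that for $n=2$ the hypothesis $2\le k<2a$ is exactly the condition $n\le k<an$ appearing in Proposition \ref{prop8}, and that the quantity $\alpha_c=\max\{\frac1k,\frac{n-1}{an-k}\}$ there specializes to $\max\{\frac1k,\frac1{2a-k}\}$, which is precisely the bound in the statement. So it suffices to verify that the hypotheses of Proposition \ref{prop8} are met for every admissible $k$.

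Next I would check that the three cases of Proposition \ref{prop8} together cover all integers $k$ with $2\le k\le 2a-1$. Case (c) requires $a<k<n+a-1=a+1$, an interval containing no integer, so it is vacuous here. Case (a) requires $k\le a$ together with $(a+1)k\ge n+1=3$; since $a\ge2$ and $k\ge2$ force $(a+1)k\ge6\ge3$, case (a) applies automatically whenever $2\le k\le a$. Case (b) requires $k\ge n+a-1=a+1$, which combined with $k<2a$ covers $a+1\le k\le 2a-1$. Hence every integer $k$ in the range $2\le k\le 2a-1$ falls under case (a) or case (b), and in either case Proposition \ref{prop8} gives $G(\alpha_c^+;2,2a-1,k)\ne\emptyset$.

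Finally, since $k\ge2=n$, the last assertion of Proposition \ref{prop8} applies and yields $G(\alpha;2,2a-1,k)\ne\emptyset$ if and only if $\alpha>\alpha_c$, which is the desired conclusion. There is essentially no serious obstacle in this argument; the only points requiring a moment's care are confirming that the case analysis of Proposition \ref{prop8} is exhaustive for $n=2$ (in particular that the ``gap'' case (c) is empty, since $a$ and $a+1$ are consecutive integers) and that the numerical side condition $(a+1)k\ge n+1$ in case (a) holds automatically under $a\ge2$, $k\ge2$.
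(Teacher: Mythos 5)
Your argument is correct and is essentially identical to the paper's proof, which simply notes that for $n=2$, $t=1$ either case (a) or case (b) of Proposition \ref{prop8} applies (case (c) being vacuous since $a<k<a+1$ has no integer solutions) and then invokes the final assertion since $k\ge n=2$. Your verification of the numerical conditions and of $\alpha_c=\max\{\frac1k,\frac1{2a-k}\}$ matches the intended reasoning.
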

\begin{proof} In this case, either (a) or (b) holds.
\end{proof}

\begin{rem}\label{r3}\begin{em}
For $k=2$ and $k=3$, see also Theorem \ref{t04}. 
\end{em}\end{rem}

\begin{prop}\label{prop10}
Suppose that $a\ge2$, $1\le t\le n-1$, $k\le at$ and
\begin{equation}\label{eq72}
t\le(a+1)k-n.
\end{equation}
Let $\alpha_c=\frac{t}k$ and suppose that $G(\alpha_c;t,(a-1)t,k)\ne\emptyset$. Then 
\begin{itemize}
\item[(i)] $G(\alpha;n,an-t,k)=\emptyset$ for $\alpha\le\alpha_c$;
\item[(ii)] $G(\alpha_c^+;n,an-t,k)\ne\emptyset$.
\end{itemize}\end{prop}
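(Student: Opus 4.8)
The plan is to mirror the proof of Proposition \ref{prop7}, replacing the exact sequence \eqref{eq53} by the ``dual'' sequence \eqref{eq55} used in Proposition \ref{prop9}. Part (i) is immediate from Proposition \ref{prop1}: since $k\le at\le a(n-1)<an$, that proposition gives, whenever $G(\alpha;n,an-t,k)\ne\emptyset$, the inequality $\alpha>\max\{\frac tk,\frac{n-t}{an-k}\}\ge\frac tk=\alpha_c$; note that $\alpha_c=\frac tk$ here precisely because $k\le at$. For part (ii), I would take a general element $(F,W)\in G(\alpha_c;t,(a-1)t,k)$; by Theorem \ref{t01}(i) its underlying bundle is of generic splitting type and hence $F\cong\cO(a-1)^t$, and $(F,W)$ is $\alpha_c$-stable with $\mu_{\alpha_c}(F,W)=(a-1)+\alpha_c\frac kt=a$, which also equals $\mu_{\alpha_c}(\cO(a)^{n-t},0)$.

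Next I would bound $\dim\Ext^1((F,W),(\cO(a),0))$ using Proposition \ref{prop01}. With $(E_2,V_2)=(F,W)$ of type $(t,(a-1)t,k)$, $(E_1,V_1)=(\cO(a),0)$ of type $(1,a,0)$ and $g=0$, one gets $C_{21}=(a+1)k-2t$, which is $\ge n-t$ by the hypothesis \eqref{eq72}; moreover $\Ext^2((F,W),(\cO(a),0))=0$ by Lemma \ref{l01}, both systems being semistable for suitable positive $\alpha$ (alternatively, directly from \eqref{eq04}, since the kernel of $W\otimes\cO\to F$ is a subsheaf of $\cO^k$ and so has all summands of degree $\le0$). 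Hence $\dim\Ext^1((F,W),(\cO(a),0))\ge n-t$, so I can choose $n-t$ linearly independent classes $e_1,\dots,e_{n-t}$ and form
\[0\lra(\cO(a)^{n-t},0)\lra(E,V)\lra(F,W)\lra0.\]
This $(E,V)$ has type $(n,an-t,k)$ and, being an extension of two $\alpha_c$-semistable systems of equal $\alpha_c$-slope $a$, is $\alpha_c$-semistable.

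It remains to prove that, with the $e_i$ linearly independent, $(E,V)$ is $\alpha_c^+$-stable; this is the crux. Put $S:=(\cO(a)^{n-t},0)$ and $Q:=(F,W)$. Any subsystem $(E_1,V_1)$ contradicting $\alpha_c^+$-stability must satisfy $\mu_{\alpha_c}(E_1,V_1)=a$ and $\frac{k_1}{n_1}\ge\frac kn$, and I may assume $E_1$ saturated in $E$. The decisive observation is that $V\cap H^0(S)=0$, so $(E_1,V_1)\cap S=(E_1',0)$ carries no sections; writing $(E_1'',V_1'')$ for the image of $(E_1,V_1)$ in $Q$, the $\alpha_c$-slope $a$ is a rank-weighted average of $\mu_{\alpha_c}(E_1',0)\le a$ (as $S$ is $\alpha_c$-semistable) and $\mu_{\alpha_c}(E_1'',V_1'')\le a$ (as $Q$ is $\alpha_c$-stable). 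If $E_1''=0$ then $k_1=0$, contradicting $\frac{k_1}{n_1}\ge\frac kn>0$; otherwise both slopes equal $a$, whence $(E_1'',V_1'')=Q$ (so $V_1=V$, since $V\to W$ is an isomorphism) and $E_1'$ is a slope-$a$ subbundle of $\cO(a)^{n-t}$, hence $E_1'\cong\cO(a)^{n_1'}$ with $n_1'<n-t$ (equality would force $(E_1,V_1)=(E,V)$). Then $(E_1,V_1)/(E_1',0)\cong Q$ splits off inside $(E,V)/(E_1',0)$, so the image of $(e_1,\dots,e_{n-t})$ under the surjection $\Ext^1(Q,S)\to\Ext^1(Q,S/(E_1',0))\cong\Ext^1(Q,(\cO(a),0))^{\oplus(n-t-n_1')}$ vanishes; but that surjection is induced by a surjective linear map $\CC^{n-t}\to\CC^{n-t-n_1'}$ with $n-t-n_1'\ge1$, which cannot annihilate a tuple of linearly independent classes. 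This contradiction shows $(E,V)$ is $\alpha_c^+$-stable.

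The main obstacle is exactly this last step: keeping track of the ranks, degrees and section spaces of the pieces $(E_1',0)$ and $(E_1'',V_1'')$, and reducing the non-existence of destabilizing subsystems to the linear independence of the extension classes; the earlier steps (part (i), the slope identity and the $\Ext^1$ dimension count) are routine.
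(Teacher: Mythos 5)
Your proposal is correct and follows the paper's route: part (i) from Proposition \ref{prop1}, and part (ii) by forming the extension \eqref{eq55} of a general $(F,W)\in G(\alpha_c;t,(a-1)t,k)$ (of $\alpha_c$-slope $a$) by $(\cO(a)^{n-t},0)$, with the same computation $\dim\Ext^1((F,W),(\cO(a),0))\ge (a+1)k-2t\ge n-t$ via \eqref{eq72}. The only difference is that where the paper concludes by citing Lemma \ref{lem02} (\cite[Lemma 3.9]{ln2}), you reprove that lemma's conclusion directly, and your verification (reduction of a destabilizing subsystem to one meeting $(\cO(a)^{n-t},0)$ in a slope-$a$ summand and contradiction with the linear independence of the extension classes) is sound.
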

\begin{proof} (i) This follows from Proposition  \ref{prop1}.

(ii) We consider sequences \eqref{eq55}. The same calculation as in the proof of Proposition \ref{prop9} shows that 
$$\dim(\Ext^1(({\mathcal O}(a-1)^t, V'),({\mathcal O}(a),0))\ge n-t.$$
The result follows from Lemma \ref{lem02}.
\end{proof}

\begin{ex}\label{ex2}\begin{em}
In the case $a=2$, $k=t\ge2$, Proposition \ref{prop10} gives no information since $G(\alpha;t,t,t)=\emptyset$ for all $\alpha$ by Proposition \ref{prop08}. In fact, $G(\alpha;n,2n-t,t)=\emptyset$ for all $\alpha$ if $n\ge 2t$. If $n>2t$, this a special case of Proposition  \ref{prop9}. For $n=2t$, we can use the argument of \cite[Remark 8.3]{ln2}; the cases $t=2$ and $t=3$ are already covered in Theorem \ref{t03}.  In general, if $G(\alpha;2t,3t,t)\ne\emptyset$, its general element has the form $(E,V)$ with $E\cong\cO(2)^t\oplus\cO(1)^t$ and there is an exact sequence
\[0\lra(\cO(2)^t,0)\lra(E,V)\lra(\cO(1)^t,W)\lra0\]
with $\dim W=t$. Since the evaluation map $W\otimes\cO\lra\cO(1)^t$ is not an isomorphism, there exists a section of $\cO(1)^t$ contained in $W$ which has a zero. Hence $(\cO(1)^t,W)$ has a subsystem $(\cO(1),W_1)$ with $\dim W_1=1$ and we have an exact sequence
\begin{equation}\label{eq73}
0\lra(\cO(2)^t,0)\lra(E_1,V_1)\lra(\cO(1),W_1)\lra0.
\end{equation}
By \eqref{eq02}, \eqref{eq03} and Lemma \ref{l01}, we have
\[\Ext^1((\cO(1),W_1),(\cO(2),0)=1,\]
so the sequence \eqref{eq73} is induced from a sequence
\[0\lra(\cO(2),0)\lra(E_2,V_2)\lra(\cO(1),W_1)\lra0.\]
Now $(E_2,V_2)$ is a subsystem of $(E,V)$ which contradicts $\alpha$-stability for all $\alpha$.
\end{em}\end{ex}

\begin{cor}\label{c7}
Suppose that the hypotheses of Proposition \ref{prop10} hold and that $k\ge n$ and $k(at-k)\ge t^2-1$. Then $G(\alpha;n,an-t,k)\ne\emptyset$ if and only if $\alpha>\alpha_c$.
\end{cor}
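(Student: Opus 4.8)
\textbf{Proof proposal for Corollary \ref{c7}.}

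The plan is to deduce this corollary from Proposition \ref{prop10} together with the general existence results already in hand, exactly in the spirit of the proofs of Corollaries \ref{c2}, \ref{c3} and the final assertion of Proposition \ref{prop8}. Proposition \ref{prop10} already supplies both halves of the statement \emph{provided} its hypothesis $G(\alpha_c;t,(a-1)t,k)\ne\emptyset$ can be verified; part (i) of that proposition gives $G(\alpha;n,an-t,k)=\emptyset$ for $\alpha\le\alpha_c$, and part (ii) gives $G(\alpha_c^+;n,an-t,k)\ne\emptyset$. So the whole task reduces to two things: first, checking that the auxiliary moduli space $G(\alpha_c;t,(a-1)t,k)$ with $\alpha_c=\frac{t}{k}$ is non-empty; and second, upgrading ``non-empty at $\alpha_c^+$'' to ``non-empty for all $\alpha>\alpha_c$'', which is immediate from Theorem \ref{t01}(ii) since the set of $\alpha$ with $G(\alpha;n,an-t,k)\ne\emptyset$ is an open interval and its left endpoint is pinned to $\alpha_c$ by part (i).

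For the auxiliary space, the key observation is that for a coherent system of type $(t,(a-1)t,k)$ the degree $(a-1)t$ is a multiple of the rank $t$ (with ``$a$'' there replaced by $a-1$), so Theorem \ref{t1} applies. Writing the data in the form of that theorem, the relevant numerical condition becomes $k((a-1+1)t-k)=k(at-k)\ge t^2-1$, which is precisely the hypothesis $k(at-k)\ge t^2-1$ we have assumed. Theorem \ref{t1} then tells us that $G(\alpha;t,(a-1)t,k)$ is non-empty for \emph{all} $\alpha>0$; in particular it is non-empty at $\alpha=\alpha_c$. (One should note the degenerate possibilities: if $a=2$ then $(a-1)t=t$ and we would be asking about $G(\alpha;t,t,t)$, which is empty — but in that case the inequality $k(at-k)\ge t^2-1$ with $k\le at=2t$ forces $k(2t-k)\ge t^2-1$, and combined with $k\ge n>t$ this range is exactly the one excluded, so the hypotheses are consistent; I would include a line confirming that when $a\ge3$, or more carefully when $(a-1)t>t$, the auxiliary space genuinely has the form covered by Theorem \ref{t1}, and when $a=2$ the hypothesis $k(at-k)\ge t^2-1$ together with $k\ge n>t$ cannot both hold, so there is nothing to prove.)

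Assembling: under the stated hypotheses, Theorem \ref{t1} gives $G(\alpha_c;t,(a-1)t,k)\ne\emptyset$, so Proposition \ref{prop10} applies and yields $G(\alpha;n,an-t,k)=\emptyset$ for $\alpha\le\alpha_c$ and $G(\alpha_c^+;n,an-t,k)\ne\emptyset$. By Theorem \ref{t01}(ii) the non-empty locus is an open interval, which by the first fact has left endpoint $\ge\alpha_c$ and by the second fact contains points arbitrarily close to $\alpha_c$ from above; hence it is exactly $]\alpha_c,\alpha_2[$ for some $\alpha_2\in(\alpha_c,\infty]$, i.e. $G(\alpha;n,an-t,k)\ne\emptyset$ if and only if $\alpha>\alpha_c$.

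\textbf{Main obstacle.} The only genuinely delicate point is the consistency/degeneracy check for the auxiliary space — making sure that ``$G(\alpha_c;t,(a-1)t,k)\ne\emptyset$'' really does follow from Theorem \ref{t1} and is not vacuously obstructed by the $a=2$ (equivalently $(a-1)t=t$, rank $=$ degree) boundary case, which Remark \ref{r1} and Proposition \ref{prop08} warn is exceptional. I expect this to amount to a short arithmetic remark showing the exceptional case is incompatible with the hypotheses $k\ge n>t$ and $k(at-k)\ge t^2-1$; everything else is bookkeeping with results already proved.
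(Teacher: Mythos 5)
Your deduction of the two ``easy'' halves is fine (Proposition \ref{prop10}(i) pins the left endpoint, \ref{prop10}(ii) gives non-emptiness at $\alpha_c^+$), but the final inference is a genuine gap: from ``the set $I(n,d,k)$ of good $\alpha$ is an open interval with left endpoint $\alpha_c$'' you conclude ``$G(\alpha;n,an-t,k)\ne\emptyset$ if and only if $\alpha>\alpha_c$''. That does not follow: the interval could be $]\alpha_c,\alpha_2[$ with $\alpha_2<\infty$, and then the ``if'' direction fails for $\alpha\ge\alpha_2$. Bounded-above intervals genuinely occur (Proposition \ref{prop07} and Remark \ref{r01}), and for $k\ge n$ the statement that non-emptiness propagates upward in $\alpha$ is precisely the open Conjecture \ref{conj2}, so it cannot be waved through. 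What is missing is an input giving non-emptiness for \emph{sufficiently large} $\alpha$; the paper's one-line proof cites Theorem \ref{t2} exactly for this, via its final clause (non-emptiness for large $\alpha$ when $k\le an-t$, or $k\le an-1$ and $a\ge t$), which together with Theorem \ref{t01}(ii) and Proposition \ref{prop10}(ii) forces $I(n,d,k)=\,]\alpha_c,\infty[$. Your write-up never invokes any large-$\alpha$ existence result, so the ``only if $\Rightarrow$ if'' half is not proved.

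Two smaller points. First, the hypothesis $G(\alpha_c;t,(a-1)t,k)\ne\emptyset$ is already part of the hypotheses of Proposition \ref{prop10}, hence assumed in the Corollary; your re-derivation of it from Theorem \ref{t1} (which is what the paper does later, in the proof of Theorem \ref{t5}(d)) is harmless but unnecessary here. Second, within that digression your degeneracy discussion is wrong: the auxiliary space is $G(\alpha;t,(a-1)t,k)$ with $k\ge n>t$, never $G(\alpha;t,t,t)$, and your claim that $a=2$ is incompatible with $k\ge n>t$ and $k(at-k)\ge t^2-1$ is false --- take $a=2$, $t=n-1$, $k=n$, where $k(2t-k)=(t+1)(t-1)=t^2-1$; the auxiliary space is then $G(\alpha;t,t,t+1)$, which is non-empty by Proposition \ref{prop08}, so there is no obstruction, but not for the reason you give.
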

\begin{proof}
This follows from Theorems \ref{t01}(ii) and \ref{t2} and Proposition \ref{prop10}
\end{proof} 

\begin{proof}[Proof of Theorem \ref{t4}]
The theorem is a combination of Propositions \ref{prop7} and \ref{prop10}.
\end{proof}

\begin{proof}[Proof of Theorem \ref{t5}]
(a) and (b) follow from Corollary \ref{c2} once we have verified the hypotheses of Proposition \ref{prop7}. In fact, if $k\le at$, \eqref{eq61} holds automatically, so it can be omitted from (a). It remains to show that $G(\alpha_c;n-t,a(n-t),k-at)\ne\emptyset$. This holds by Theorem \ref{t1} and Remark \ref{r1} since $k-at\ge n-t$ by hypothesis.

(c) is Corollary \ref{c3}.

(d) follows from Corollary \ref{c7} once we have verified the hypotheses of Proposition \ref{prop10}. The inequality \eqref{eq72} holds automatically since $k\ge n$, while $G(\alpha_c;t,(a-1)t,k)\ne\emptyset$  by Theorem \ref{t1} since $k>t$ and $k(at-k)\ge t^2-1$ by hypothesis.
\end{proof}

The following remark gives conditions for non-emptiness for small $\alpha$ which are not covered by any of the above results.
\begin{rem}\label{r4}\begin{em}
Suppose $a\ge2$. By Proposition \ref{prop08}, $G(\alpha;n,an-t,n+1)\ne\emptyset$ for all $\alpha>t$. This is best possible for $t=0$ but not always for $t\ge1$ (see, for example, Corollary \ref{c4} with $a=3$ and $k=3$).
\end{em}\end{rem}

\end{document}